\DeclarePairedDelimiter\floor{\lfloor}{\rfloor}
\newtheorem{theorem}{Theorem}[section]
\newtheorem{lemma}[theorem]{Lemma}
\newtheorem{proposition}[theorem]{Proposition}
\newtheorem{corollary}[theorem]{Corollary}
\newtheorem{assumption}{Assumption}
\theoremstyle{remark}
\newtheorem{remark}[theorem]{Remark}
\begin{document}

\title[Quenched localisation in the BTM with slowly varying traps]{Quenched localisation in the Bouchaud trap \\ model with slowly varying traps}
\author{David A. Croydon$^1$}
\address{$^1$Department of Statistics, University of Warwick}
\email{d.a.croydon@warwick.ac.uk}
\author{Stephen Muirhead$^2$}
\address{$^2$Department of Mathematics, University College London (current address: Mathematical Institute, University of Oxford)}
\email{muirhead@maths.ox.ac.uk}
\subjclass[2010]{60K37 (Primary) 82C44, 60G50 (Secondary)}
\keywords{Random walk in random environment, Bouchaud trap model, localisation, slowly varying tail}
\thanks{Part of this article was written whilst the first author was a Visiting Associate Professor at Kyoto University, Research Institute for Mathematical Sciences. He would like to thank Takashi Kumagai and Ryoki Fukushima for their kind and generous hospitality. The second author was partially supported by a Graduate Research Scholarship from University College London and the Leverhulme Research Grant RPG-2012-608 held by Nadia Sidorova, and partially supported by the Engineering \& Physical Sciences Research Council (EPSRC) Fellowship EP/M002896/1 held by Dmitry Belyaev. We would like to thank an anonymous referee for their thoughtful comments.}
\date{\today}

\begin{abstract}
We consider the quenched localisation of the Bouchaud trap model on the positive integers in the case that the trap distribution has a slowly varying tail at infinity. Our main result is that for each $N \in \{2, 3, \ldots\}$ there exists a slowly varying tail such that quenched localisation occurs on exactly $N$ sites. As far as we are aware, this is the first example of a model in which the exact number of localisation sites are able to be `tuned' according to the model parameters. Key intuition for this result is provided by an observation about the sum-max ratio for sequences of independent and identically distributed random variables with a slowly varying distributional tail, which is of independent interest.
\end{abstract}
\maketitle

\section{Introduction}

This article studies localisation properties of the Bouchaud trap model (BTM) on the positive integers in the case of slowly varying traps. To define the BTM, we first introduce a trapping landscape $\sigma=(\sigma_x)_{x \in \mathbb{Z}^+}$, which is a collection of independent and identically distributed (i.i.d.)\ strictly-positive random variables built on a probability space with probability measure~$\mathbf{P}$. Conditional on $\sigma$, the dynamics of the BTM are given by a continuous-time $\mathbb{Z}^+$-valued Markov chain $X=(X_t)_{t\geq 0}$, started from the origin, with transition rates
\begin{equation}\label{trates}
 w_{x \to y} = \begin{cases}
 \frac{1}{2 \sigma_x}, & \text{if }  y \sim x, \\
 0, & \text{otherwise,}
\end{cases}
\end{equation}
where $y \sim x$ means that $x$ and $y$ are nearest neighbours in $\mathbb{Z}^+$. We denote the law of $X$ conditional on $\sigma$, the so-called `quenched' law of the BTM, by $P_\sigma$. Our focus is on the case in which the trap distribution $\sigma_0$ has a slowly varying tail at infinity, i.e.\ when the (non-decreasing, unbounded, c\`{a}dl\`{a}g) function
\[ L(u) := \frac{1}{\mathbf{P}(\sigma_0 > u)} \]
satisfies the \text{slow-variation} assumption
\begin{equation}\label{eq:slow}
 \lim_{u \to \infty} \frac{L(u v)}{L(u)} = 1 \quad \text{for all} \ v > 0.
 \end{equation}

Slowly varying trap models arise naturally in the study of certain random walks in random media, such as biased random walks on critical Galton-Watson trees \cite{Croydon13}, and spin-glass dynamics on subexponential time scales \cite{BenArous12, Bovier13}. They also have parallels with Sinai's random walk \cite{Sinai82}, as reflected in the logarithmic rate of escape to infinity and strong localisation properties of that model. With regards to the BTM with slowly varying traps in particular, recent work has studied localisation \cite{Muirhead15}, ageing \cite{Gun13}, and scaling limits \cite{Croydon15}, which are qualitatively different from the equivalent phenomena in the case of integrable or regularly varying traps.

In this work we continue the study of the BTM with slowly varying traps by considering more delicate localisation properties of the model, namely those that hold under the quenched law for typical realisations of the trapping landscape. We expect that similar quenched localisation properties hold throughout the class of general slowly varying trap models. For simplicity we have chosen to work in the one-sided case (i.e.\ on the positive integers, rather than on the integers); this avoids some of the technical difficulties present in the two-side case, yet still exhibits the phenomena that interest us. We make some remarks about quenched localisation in the BTM on the integers below.

\subsection{Localisation in the BTM}

It was recently shown in \cite{Muirhead15} that the BTM on the integers with slowly varying traps exhibits two-site localisation in probability, that is, there exists a  ($\mathbf{P}$-measurable) set-valued process $\Gamma_t$ such that $|\Gamma_t|=2$ and, as $t \to \infty$,
\begin{align}
\label{eq:twosite}
 P_\sigma(X_t \in \Gamma_t) \to 1  \qquad \text{in } \mathbf{P} \text{-probability,}
 \end{align}
and, moreover, no set-valued process $\Gamma_t$ with $|\Gamma_t| = 1$ satisfies equation \eqref{eq:twosite}; note that here the probability measures $\mathbf{P}$ and $P_\sigma$ refer to the BTM on the integers. The basic fact underlying this localisation result is that the cumulative sum of i.i.d.\ sequences of slowly varying random variables is asymptotically dominated, with high probability, by the maximal term. Translated to the setting of the BTM, this property implies that the BTM with slowly varying traps is highly likely to be located, at any sufficient large time, on the largest traps on the positive and negative half-lines that are `within reach' of the BTM by this time. Note that two-site localisation is not exhibited in the BTM with regularly varying or integrable traps.

We seek to establish an almost-sure analogue of the above result, that is, to determine the smallest ($\mathbf{P}$-measurable) set-valued process $\Gamma_t$ such that, as $t \to \infty$,
\[  P_\sigma(X_t \in \Gamma_t) \to 1 \qquad \mathbf{P}\text{-almost-surely};\]
whenever we refer to `quenched localisation', it is a limit such as this that we mean. In the one-sided case, it is possible to check that the analogue to (\ref{eq:twosite}) holds with $|\Gamma_t|=1$ (see Theorem \ref{thm:maincl} below). From this, simple heuristics suggest that the strongest form of quenched localisation that one might hope to observe is that there exists a localisation set $\Gamma_t$ such that $|\Gamma_t| = 2$; this follows from the fact that the probability mass function eventually moves out to infinity and so, by continuity, must be spread over at least two sites at arbitrarily large times.

In this paper we prove that this strongest form of quenched localisation is actually attained for certain examples of the BTM with slowly varying traps; in other words, we prove that there exists a certain class of slowly varying tails for which the BTM localises on two sites eventually almost-surely, and for which one localisation site would be insufficient.

More surprisingly perhaps, for each $N \in \{2, 3, \ldots \}$ we show that there exists a slowly varying tail such that quenched localisation occurs on exactly $N$ sites. As such, the BTM with slowly varying traps is an example of a model that exhibits quenched localisation on a finite number of sites, with the exact number of localisation sites able to be tuned by adjusting the parameters of the model. As far as we are aware, this is the first known example of such a model.

\subsection{Main results}

In this section, we describe our main results on localisation in the BTM on the positive integers. We assume throughout that the trap distribution satisfies the slow-variation assumption (\ref{eq:slow}). As a preliminary, we first state the one-sided analogue of equation \eqref{eq:twosite}; that is, we establish the complete localisation in probability of the BTM on the positive integers.

\begin{theorem}[Complete localisation in probability]
\label{thm:maincl}
It is possible to define a ($\mathbf{P}$-measurable) process $Z_t$ such that, as $t \to \infty$,
\begin{align}
   P_\sigma(X_t = Z_t) \to 1 \quad \text{in } \mathbf{P}\text{-probability.}\nonumber
\end{align}
\end{theorem}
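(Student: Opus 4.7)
The natural candidate for the localisation site is $Z_t := \arg\max_{0 \le x \le n(t)} \sigma_x$, where $n = n(t)$ is a deterministic (in $\sigma$) scale calibrated so that the walker has explored $[0, n(t)]$ by time $t$ without venturing much beyond. Then $Z_t$ is $\mathbf{P}$-measurable, and the convergence $P_\sigma(X_t = Z_t) \to 1$ in $\mathbf{P}$-probability will follow by showing, via the slow-variation hypothesis, that all other traps in $[0, n(t)]$ are collectively negligible compared to $\sigma_{Z_t}$, so that the walker sits at this single site almost always.

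To calibrate $n(t)$, I would use the birth-death hitting time formula for the BTM on $[0, n]$: iterating the one-step relation $E_z[\tau_{z+1}] = 2\sigma_z + E_{z-1}[\tau_z]$ (with $E_0[\tau_1] = 2\sigma_0$) gives $E_0[\tau_n] = 2\sum_{z=0}^{n-1}(n-z)\sigma_z$. By the sum-max observation advertised in the abstract --- that under slow variation $\sum_{z<n}\sigma_z / \max_{z<n}\sigma_z \to 1$ in $\mathbf{P}$-probability --- this sum is comparable, up to a factor of order $n$, to $n\max_{z<n}\sigma_z$. I would then take $n(t)$ to be the largest integer $n$ for which $n \max_{z<n}\sigma_z \le t/g(t)$ for a function $g$ diverging slowly; Markov's inequality then gives that the walker remains inside $[0, n(t)]$ throughout $[0, t]$ with $P_\sigma$-probability $1 - o(1)$ on a $\mathbf{P}$-typical event, while a matching lower bound on hitting times (again using the birth-death formula) shows that the walker has actually hit $Z_t$ well before time $t$.

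On this event the walker behaves as a positive-recurrent reversible chain with invariant measure $\pi_x \propto \sigma_x$, and the sum-max observation gives $\pi_{Z_t} \to 1$ in $\mathbf{P}$-probability. To upgrade from occupation-time to marginal concentration I would decompose the trajectory into alternating sojourns at $Z_t$ (each exponential with mean $\sigma_{Z_t}$) and excursions away (with mean duration $\sigma_{Z_t}(1-\pi_{Z_t})/\pi_{Z_t} = o(\sigma_{Z_t})$). Since the expected number of completed cycles by time $t$ is of order $n(t)g(t) \to \infty$, the elementary renewal theorem yields $P_\sigma(X_t = Z_t) = \pi_{Z_t} + o(1) \to 1$. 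The main obstacle is calibrating $g(t)$ to reconcile the competing requirements (reach $Z_t$; do not exit $[0, n(t)]$; complete many sojourn--excursion cycles) simultaneously on a single $\mathbf{P}$-typical event; because only convergence in $\mathbf{P}$-probability is required, this can be handled by straightforward Markov and Chebyshev estimates for each ingredient separately, without invoking any Borel--Cantelli machinery.
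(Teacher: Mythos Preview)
Your overall plan---pick out the dominant trap $Z_t$ on an appropriate spatial scale, show the walk has reached it and not escaped that scale, then argue the mass concentrates there---is the same as the paper's. But two of the three steps do not go through as written.

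\textbf{Confinement.} You have the roles of the two hitting--time bounds swapped, and more seriously, your calibration actually forces the walk \emph{out} of $[0,n(t)]$. Markov's inequality applied to $E_0[\tau_{n(t)}]$ bounds $P_\sigma(\tau_{n(t)}>t)$ from above, i.e.\ it shows the walk reaches $n(t)$; it does not show confinement. And indeed, by your sum/max heuristic, $E_0[\tau_{n(t)+1}]\approx 2\sum_{z\le n(t)}(n(t)+1-z)\sigma_z$ is of order $n(t)\sigma_{Z_t}$; whenever $Z_t<n(t)$ (the typical case, since $n(t)$ sweeps through many indices between successive records) your defining inequality gives $n(t)\sigma_{Z_t}=n(t)\max_{z<n(t)}\sigma_z\le t/g(t)=o(t)$, so the walk exits $[0,n(t)]$ well before time $t$. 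The paper's fix is to let the outer boundary depend on $\sigma_{Z_t}$ itself: it sets $O_t=Z_t+h_t\max\{t/\sigma_{Z_t},1\}$, after which the escape bound $P_\sigma(\tau_{O_t}\le t)\le t/\bigl(2(O_t-Z_t)\sigma_{Z_t}\bigr)\le 1/(2h_t)\to 0$ is immediate. Showing that this $\sigma_{Z_t}$--dependent window typically contains no larger trap is where the overshoot phenomenon for slowly varying tails enters---with high $\mathbf{P}$--probability $\sigma_{Z_t}$ vastly exceeds the level $\ell_t$ satisfying $\ell_t L(\ell_t)=t$, so that $t/\sigma_{Z_t}$ is tiny compared with $L(\sigma_{Z_t})$---and your sketch never invokes it.

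\textbf{Localisation once confined.} Your renewal route is plausible but more delicate than you suggest: the elementary (or key) renewal theorem gives convergence without a rate, and the chain to which you apply it changes with $t$, so a quantitative mixing input would be needed. The paper bypasses this completely via reversibility: for the reflected chain on $[0,O_t]$ started from $Z_t$ one has $P_{Z_t}(X_s=z)\le\sigma_z/\sigma_{Z_t}$ for every $z$ and every $s\ge 0$, so summing over $z\ne Z_t$ and using sum/max $\to 1$ in $\mathbf{P}$--probability gives $\sup_{s\ge 0}P_{Z_t}(X_s\ne Z_t)\to 0$ in one line, with no renewal or mixing argument at all.
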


The main focus of the paper is to establish quenched analogues of Theorem \ref{thm:maincl}. Interestingly, quenched localisation in the BTM turns out to depend on rather fine properties of the trap distribution $\sigma_0$. To introduce these properties, we first recall that a function $L$ is said to be \textit{second-order slowly varying with rate} $g$, if there exist functions $g, k$ such that $g(u) \to 0$ as $u \to \infty$ and
\begin{align}\label{sosv}
\lim_{u \to \infty} \frac{\frac{L(u v)}{L(u)} - 1}{ g(u)}  =  k(v), \quad \text{for any } v > 0,
\end{align}
and where there exists a $v$ such that $k(v)\neq 0$ and $k(uv)\neq k(u)$ for all $u>0$; as discussed in \cite{Croydon15}, in our setting it follows from this assumption that $g$ itself is slowly varying and (possibly after multiplying by a constant) we can take $k(v)=\log v$. Second-order slow-variation is a natural strengthening of the slow-variation property \eqref{eq:slow}, giving more precise information about the fluctuations of $L$ at infinity; see \cite[Chapter 3]{Bingham87} for an overview of second-order slow-variation.

In particular, each of our main results will depend on the assumption that $L$ is second-order slowly varying. In addition, it will be convenient to assume certain extra regularity condition on $L$ and $g$, namely that $L$ is continuous (which avoids complications in our treatment of records of the sequence $\sigma$) and that the decay of $g$ is eventually monotone (which allow us to control the decay of $g(x)$ through bounds on $x$).

\begin{assumption}[Second-order slow-variation assumption]
\label{assumpt:sosv} The function $L$ is continuous, and satisfies \eqref{sosv} with second-order slow-variation rate $g$ that is eventually monotone decreasing.
\end{assumption}

We shall now explain how the quenched localisation behaviour of the BTM depends on the precise asymptotic decay of the second-order slow-variation rate $g$. For the rest of this section we work under Assumption \ref{assumpt:sosv}, and abbreviate the function
\[ d(u) := g(L^{-1}(u)) , \]
where $L^{-1}$ denotes the right-continuous inverse of $L$, noting that $d(u) \to 0$ as $u \to \infty$. Define the integer
\begin{align}
\label{eq:N}
 N := \min \left\{\ell \in \{2, 3, \ldots\}  :  \sum_{n \in \mathbb{N}} \left(d(e^n) \log n\right)^{\ell-1} < \infty    \right\} ,
\end{align}
setting $N = \infty$ if no such $\ell$ exists. Our first main theorem (Theorem \ref{thm:main1}) identifies $N$ as the number of quenched localisation sites of the BTM. Before stating this result, we first need to introduce an additional assumption that is necessary for certain aspects of our results to hold. This assumption acts to exclude boundary cases, in which the number of localisation sites in a sense falls intermediate between two integers.

\begin{assumption}[Exclusion of boundary cases]
\label{assumpt:g22}  It is the case that $N < \infty$, and \\
\indent
\[ \text{(a)} \quad \sum_{n \in \mathbb{N}} d(e^n)^{N-2} = \infty \quad,  \qquad \text{(b)}  \quad \sum_{n \in \mathbb{N}} d(e^n)^{N-1} (\log n)^N < \infty . \]
\end{assumption}

\begin{remark}
We note that Assumptions \ref{assumpt:sosv} and \ref{assumpt:g22} are satisfied for a wide range of slowly varying distributions~$\sigma_0$. The main examples we have in mind are distributions satisfying

\begin{align}
\label{eq:L}
L(u) := \exp \{ (\log (1+ u))^\gamma \}  , \quad \gamma \in (0, 1) ,
\end{align}
for which $g(u) =\gamma (\log (1+ u))^{\gamma-1}$, $d(e^n)\sim\gamma n^{-\frac{1-\gamma}{\gamma}}$, and
\[ N = 2 + \left \lfloor \frac{\gamma}{1-\gamma} \right \rfloor  .\]
In this example, we observe that $N = 2$ if and only if $\gamma < 1/2$, that $N \to \infty$ as $\gamma \to 1$, and moreover that any $N \in \{2, 3, \ldots\}$ is attainable by selecting an appropriate $\gamma \in (0, 1)$. Other classes of slowly varying distribution for which our results hold are those with logarithmic decay ($L(u) = (1+\log (1+ u))^\gamma, \gamma > 0$), or double logarithmic decay ($L(u) = (1+\log (1 + \log (1+ u)))^\gamma, \gamma > 0$); in both cases $L$ satisfies  Assumptions \ref{assumpt:sosv} and~\ref{assumpt:g22} with $N = 2$.
\end{remark}

Our first main result establishes the property of $N$-site localisation almost-surely.

\begin{theorem}[$N$-site localisation]
\label{thm:main1} If Assumption \ref{assumpt:sosv} holds, then there exists a ($\mathbf{P}$-measurable) set-valued process $\Gamma_t$ satisfying $|\Gamma_t| \le N$ such that, as $t \to \infty$,
\begin{align}
\label{eq:asloc}
   P_\sigma(X_t \in \Gamma_t) \to 1 \quad \mathbf{P}\text{-almost-surely.}
\end{align}
Moreover if Assumption \ref{assumpt:g22} also holds, then there is no set-valued process $\Gamma_t$ satisfying $|\Gamma_t| < N$ such that \eqref{eq:asloc} holds.
\end{theorem}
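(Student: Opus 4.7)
The overall strategy is to reduce the quenched localisation question to an almost-sure comparison of order statistics of the i.i.d.\ trapping sequence $(\sigma_x)_{x \ge 0}$, via an equilibrium heuristic: on time scales long enough for the walk on $\mathbb{Z}^+$ to equilibrate over an explored region $[0,M(t)]$, but short enough that no appreciable mass is carried beyond $M(t)$, the quenched distribution $P_\sigma(X_t \in \cdot)$ should be well-approximated by the invariant measure $\pi(x) \propto \sigma_x$ restricted to $[0, M(t)]$. Accordingly I would take $\Gamma_t$ to be the set of $N$ largest $\sigma_x$ over $x \in [0, M(t)]$, for a deterministic growth function $M(t)$ calibrated so that the mixing time of the walk on $[0, M(t)]$, which in the presence of heavy traps is governed by $\sum_{x \le M(t)} \sigma_x$, is just below $t$.

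Given this reduction, the almost-sure statement $P_\sigma(X_t \in \Gamma_t) \to 1$ becomes
\[  \frac{\sigma_{(1)} + \cdots + \sigma_{(N)}}{\sigma_{(1)} + \sigma_{(2)} + \cdots + \sigma_{(M(t))}} \to 1 \qquad \mathbf{P}\text{-a.s.} , \]
where $\sigma_{(1)} \ge \sigma_{(2)} \ge \cdots$ denote the order statistics on $[0, M(t)]$. Under Assumption \ref{assumpt:sosv}, a Renyi-type representation writes $\sigma_{(\ell)}$ from a sample of size $m$ as $L^{-1}(m/(E_1 + \cdots + E_\ell))$ with $E_i$ i.i.d.\ $\mathrm{Exp}(1)$, and the second-order slow-variation property then furnishes the approximation $\sigma_{(\ell)}/\sigma_{(1)} \approx 1 - d(m)(\log(E_1 + \cdots + E_\ell) - \log E_1)$. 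From here, the probability that $\sigma_{(N+1)} + \sigma_{(N+2)} + \cdots$ contributes more than a fixed fraction of the total sum is controlled at scale $m = e^n$ by an expression of order $(d(e^n)\log n)^{N-1}$, with the $\log n$ factors arising from the extremal behaviour of the exponential spacings. Borel--Cantelli along the discrete scales $n$ then gives the almost-sure conclusion, precisely because $\sum_n (d(e^n)\log n)^{N-1} < \infty$ by the definition of $N$.

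For the lower bound, suppose $\Gamma'_t$ satisfies $|\Gamma'_t| < N$. Assumption \ref{assumpt:g22}(a), the divergence $\sum_n d(e^n)^{N-2} = \infty$, is tailored so that a converse Borel--Cantelli argument --- independence across scales following from the disjointness of the intervals $(e^{n-1}, e^n]$ --- produces infinitely many scales $n$ at which the $(N-1)$-th largest trap on $[0, e^n]$ is comparable in size to the cumulative tail $\sum_{k \ge N} \sigma_{(k)}$. Assumption \ref{assumpt:g22}(b) then controls the logarithmic corrections, ensuring that these bad events can be localised cleanly to individual scales. At such scales, the invariant measure of the complement of any $(N-1)$-element set has positive mass, which, passed back through the equilibrium reduction, prevents $(N-1)$-site quenched localisation. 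I expect the main technical obstacle to be the equilibrium reduction itself: the heavy tails of $\sigma$ mean that the mixing time is dominated by escapes from the very largest traps, so a naive spectral bound is too weak, and a more delicate argument --- either a coupling with the walk restricted to the non-dominant sites, or an electric-network estimate isolating the contribution of each dominant trap --- seems necessary to make the comparison of $P_\sigma(X_t \in \cdot)$ with $\pi$ quantitative enough to distinguish the $(N-1)$-site and $N$-site regimes.
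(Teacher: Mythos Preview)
Your equilibrium-reduction heuristic captures the right combinatorial content --- the sum/max behaviour of slowly varying order statistics, governed by the summability of $(d(e^n)\log n)^{N-1}$ --- but the reduction itself has a structural gap that the paper resolves by a different route.

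The central difficulty is exactly the one you flag: the walk on $[0,M(t)]$ does \emph{not} mix by time $t$. The mixing time is dominated by the escape time from the deepest trap, which is itself of order $t$ or larger, so there is no window in which ``mixing has occurred but escape has not''. Relatedly, a deterministic $M(t)$ cannot be calibrated against the random quantity $\sum_{x\le M(t)}\sigma_x$; the spatial scale explored by time $t$ is genuinely $\sigma$-dependent, and the paper's outer boundary $O_t$ is accordingly random (built from hyperbolic exceedences and a chaining procedure that tracks which further records the walk might plausibly reach).

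For the upper bound the paper sidesteps mixing altogether. Only two ingredients are needed: (i) a hitting-time bound showing the walk has reached the deepest accessible trap $z_t^I$ by time $t$, and (ii) the elementary reversibility inequality $P_x^{a,b}(X_t^{a,b}\in S)\le \sum_{z\in S}\sigma_z/\sigma_x$, valid for \emph{all} $t$ with no mixing hypothesis. Once the walk has touched the deep trap, reversibility alone forces most mass to sit on the large-$\sigma$ sites, regardless of whether the chain has equilibrated. The bound $|\Gamma_t|\le N$ is then a separate almost-sure computation on the landscape at the ``relocalisation times'' between successive records.

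For the lower bound the paper does use a mixing argument, but on a \emph{modified} chain in which the new record trap $\sigma_{z_N}$ is artificially shallowed. The favourable event is engineered so that $N-2$ near-records cluster near the old record $r_{n-1}$; the walk mixes quickly over these $N-1$ sites (now that the dominant trap has been removed), while a controlled fraction $1/N$ of mass leaks onto the true record $r_n$. Your picture would assign each site its stationary weight $\sigma_{z_i}/\sum_z\sigma_z$, but at the balancing time the walk has only \emph{reached} $z_N$ with probability $1/N$, so the actual distribution is a mixture of ``already at $z_N$'' and ``still mixing over $z_1,\dots,z_{N-1}$'', not the invariant measure on the full region. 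This distinction is what makes each of the $N$ sites carry mass close to $1/N$, rather than the record $z_N$ swallowing almost everything as stationarity would predict.
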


The first claim of Theorem \ref{thm:main1} states that the probability mass of the BTM is asymptotically supported by a certain ($\mathbf{P}$-measurable) collection of $N$ sites, where $N \ge 2$. To deduce the second claim that no smaller set will do, we study the most favoured site of the BTM. In particular, our second main result (Theorem \ref{thm:main2}) shows that the associated probability mass fluctuates infinitely often between the bounds of $1/N$ and $1$; Figure \ref{trajpic} shows a sketch of a typical trajectory of this probability mass. An immediate implication of this is that there exist arbitrary large times such that the probability mass of the BTM is approximately uniform across each of the $N$ localisation sites, which is sufficient to complete the proof of Theorem \ref{thm:main1}.

\begin{theorem}[Probability mass on most favoured site]
\label{thm:main2} If Assumption \ref{assumpt:sosv} holds, then $\mathbf{P}$-almost-surely,
\[ \liminf_{t \to \infty} \, \sup_{x \in \mathbb{Z}^+} P_\sigma(X_t = x) \ge 1/N \quad \text{and} \quad  \limsup_{t \to \infty} \, \sup_{x \in \mathbb{Z}^+}P_\sigma(X_t = x)  = 1  .\]
Moreover if Assumption \ref{assumpt:g22} also holds, then $\mathbf{P}$-almost-surely,
 \[ \liminf_{t \to \infty} \, \sup_{x \in \mathbb{Z}^+} P_\sigma(X_t = x) = 1/N . \]
\end{theorem}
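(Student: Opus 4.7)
The plan is to translate the three claims about $\sup_x P_\sigma(X_t = x)$ into statements about the top order statistics of the trap landscape along the exploration path, and then to control those statistics via the second-order slow-variation expansion. Specifically, I expect an earlier (quantitative) refinement of Theorem~\ref{thm:maincl}, also needed for Theorem~\ref{thm:main1}, to produce a $\mathbf{P}$-measurable set $\Gamma_t$ of ``competitive'' deep traps such that $P_\sigma(X_t = x)$ coincides, up to a vanishing total-variation error, with the conditional equilibrium $\sigma_x/\sum_{y \in \Gamma_t}\sigma_y$ supported on $\Gamma_t$. Under this approximation, $\sup_x P_\sigma(X_t=x) = M_t/\Sigma_t + o(1)$ where $M_t = \max_{x \in \Gamma_t}\sigma_x$ and $\Sigma_t = \sum_{x \in \Gamma_t}\sigma_x$, so the three bounds of the theorem become three bounds on this sum-max ratio.

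The $\limsup = 1$ claim is an immediate consequence of Theorem~\ref{thm:maincl}: since $P_\sigma(X_t = Z_t) \to 1$ in $\mathbf{P}$-probability and the sequence is bounded by $1$, a diagonal subsequence extraction yields a deterministic sequence $t_k \uparrow \infty$ along which the convergence is $\mathbf{P}$-almost-sure, so $\sup_x P_\sigma(X_{t_k}=x) \to 1$ almost surely.

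For the $\liminf \ge 1/N$ claim, I would reduce to showing that, almost surely, for all large $t$, the set $\Gamma_t$ contains at most $N - 1$ traps beyond the maximum within a factor $1+o(1)$ of the latter; a counting argument then yields $M_t/\Sigma_t \ge 1/N + o(1)$. Using the representation $\sigma_{(k)} = L^{-1}(n/\Gamma_k^{\ast})$, with $\Gamma_k^{\ast}$ the $k$-th arrival of a unit-rate Poisson process and $n$ the relevant number of sites in the window, together with the second-order expansion of $L^{-1}$, the event that $N$ top arrivals fall within the required near-equal ratio at scale $n$ has probability of order $d(e^n)^{N-1}$; a union bound over the $O(\log n)$ dyadic $t$-sub-windows at scale $n$ then supplies an extra $(\log n)^{N-1}$ factor, reproducing exactly the summand in (\ref{eq:N}) for $\ell = N$, which is summable by the definition of $N$. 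The first Borel--Cantelli lemma then gives the claim.

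For the matching $\liminf \le 1/N$ under Assumption~\ref{assumpt:g22}, the goal is to produce an almost-surely infinite sequence of times at which $\Gamma_t$ contains exactly $N$ essentially-equal sites. Here divergence of $\sum d(e^n)^{N-2}$ in Assumption~\ref{assumpt:g22}(a) drives a second Borel--Cantelli argument to produce infinitely many scales $n$ at which a near-equal $N$-tuple of competitive traps exists, and Assumption~\ref{assumpt:g22}(b) is used to exclude the possibility that an $(N+1)$-th competitor appears alongside them, which would otherwise split the mass more thinly than $1/N$ and make the limit value ambiguous. The principal obstacle I foresee is verifying the asymptotic independence required for the second Borel--Cantelli lemma, since the trap landscapes underlying events at different scales overlap; I would address this by restricting to an exponentially spaced subsequence of scales on which the relevant $\sigma_x$'s live in disjoint blocks, and by using a common truncation threshold to recast each event as a function of only the $\sigma_x$'s in its own block.
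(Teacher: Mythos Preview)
Your central reduction---that $P_\sigma(X_t=x)$ coincides on $\Gamma_t$, up to vanishing error, with the conditional equilibrium $\sigma_x/\sum_{y\in\Gamma_t}\sigma_y$---is not established in the paper and is in fact \emph{false} at precisely the times that govern the $\liminf\le 1/N$ bound. At the critical ``relocalisation'' times the localisation set consists of $N-1$ near-records $z_1,\dots,z_{N-1}$ with $\sigma_{z_i}\in((1-\varepsilon_0)\sigma_{(n-1)},\sigma_{(n-1)}]$, together with the \emph{new} record $z_N=r_n$ satisfying $\sigma_{z_N}>\varepsilon_5^{-1}\sigma_{(n-1)}$ for arbitrarily small $\varepsilon_5$. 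Under your equilibrium approximation almost all the mass would sit at $z_N$, giving $\sup_x P_\sigma(X_t=x)\approx 1$, not $1/N$. The paper's mechanism is genuinely dynamic: one first \emph{defines} $t_n$ so that exactly mass $1/N$ has hit $z_N$ (Lemma~\ref{rnbal}), and then runs a separate mixing-time argument on a modified chain with $\sigma_{z_N}$ artificially reduced (Lemma~\ref{mixest}, Proposition~\ref{balance}) to show that the residual mass $(N-1)/N$ has equilibrated on the subcluster $\{z_1,\dots,z_{N-1}\}$. The balance therefore comes from a tuned hitting-time event combined with fast mixing on a subcluster, not from equilibrium on the full $\Gamma_t$; your picture of an ``$N$-tuple of essentially-equal sites'' is off by one, which is also why Assumption~\ref{assumpt:g22}(a) carries the exponent $N-2$ rather than $N-1$.

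For $\liminf\ge 1/N$ your route is unnecessarily delicate and again rests on the unproven equilibrium approximation. The paper simply combines $P_\sigma(X_t\in\Gamma_t)\to 1$ a.s.\ with $|\Gamma_t|\le N$ eventually a.s.\ (both from Section~\ref{sec:loc}); pigeonhole then gives $\sup_x P_\sigma(X_t=x)\ge (1-o(1))/N$ directly, with no control whatsoever on the relative sizes of the $\sigma_{z_i}$. Your $\limsup=1$ argument via a Borel--Cantelli subsequence is fine and essentially equivalent to the paper's one-line appeal to Fatou and Theorem~\ref{thm:maincl}.
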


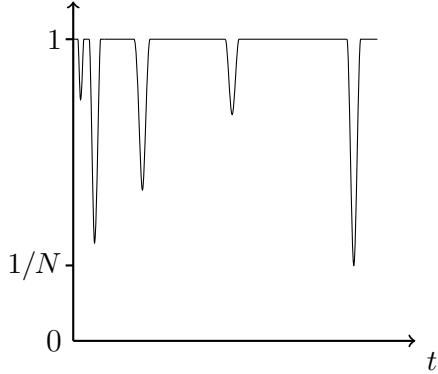
\begin{figure}[ht]
\begin{tikzpicture}
\draw[thick,->] (0,0) -- (4.5,0) node[anchor=north west] {\small $t$};
\draw[thick,->] (0,0)  node[anchor=east] {$0$} -- (0,1)  node[anchor=east] {\small $1/N$} -- (0,4) node[anchor=east] {\small $1$} -- (0,4.5) node[anchor=south] {};
\draw[thick] (0,1) -- (-0.1,1);
\draw[thick] (0,4) -- (-0.1,4);
\draw (0,4) -- (0.06,4) .. controls (0.072,4) and (0.084,3.2) .. (0.096,3.2) -- (0.096,3.2) .. controls (0.116,3.2) and (0.128,4) .. (0.14,4) -- (0.21,4);
\draw (0.21,4) -- (0.21,4) .. controls (0.23,4) and (0.26,1.3) .. (0.28,1.3) -- (0.28,1.3) .. controls (0.31,1.3) and (0.34,4) .. (0.36,4) -- (0.41,4);
\draw (0.41,4) -- (0.8,4) .. controls (0.85,4) and (0.88,2) .. (0.91,2) -- (0.91,2) .. controls (0.94,2) and (0.97,4) .. (1.01,4) -- (2,4);
\draw (2,4) .. controls (2.03,4) and (2.06,3) .. (2.09,3) -- (2.09,3) .. controls (2.12,3) and (2.15,4) .. (2.18,4) -- (3.6,4);
\draw (3.6,4) .. controls (3.63,4) and (3.66,1) .. (3.69,1) -- (3.69,1) .. controls (3.72,1) and (3.75,4) .. (3.78,4) -- (4,4);
\end{tikzpicture}
\caption{Typical trajectories of the probability mass on the most favoured site, $\sup_{x \in \mathbb{Z}^+} P_\sigma(X_t = x)$, in the slowly varying case.}\label{trajpic}
\end{figure}

\begin{remark}
To give some intuition as to why the number of localisation sites depends on the second-order slow-variation rate $g$ in the way determined by \eqref{eq:N}, consider that, for $v > 1$,
\[ \mathbf{P} \left(\sigma_0 \in (u, uv] \:|\:  \sigma_0 > u \right) =  \frac{1/L(u) - 1/L(uv)}{1/L(u)} = 1 - \frac{L(u)}{L(uv)} \sim g(u) k(v)   ,  \]
and so $g(u)$ gives an approximate measure of how likely records, or near records, of the sequence $(\sigma_i)_{i \in \mathbb{N}}$ are to cluster on the same scale $u$. In particular, $g(u)^k$ gives the approximate probability that such a cluster consists of at least $k$ sites. Next, consider that the height of the $n^{\text{th}}$ record of the sequence $(\sigma_i)_{i \in \mathbb{N}}$ is approximately $L^{-1}(e^n)$ (see, e.g.\ Lemma \ref{lem:probrecord}). Hence, by a Borel-Cantelli argument, the summability of the sequence
\[  \left( d(e^n)^{k} \right)_{n \in \mathbb{N}}:=\left( g \left( L^{-1}(e^n) \right)^k \right)_{n \in \mathbb{N}}\]
determines whether a cluster of $k$ records, or near records, occurs around the $n^{\text{th}}$ record eventually almost-surely. From here, notice that a cluster of records, or near records, on the same scale naturally gives rise to a division of the probability mass function of the BTM across this cluster. Counting the site of the $n^{\text{th}}$ record and the site from which the BTM eventually escapes after leaving the associated cluster of records or near records, this line of argument suggests that, under Assumptions \ref{assumpt:sosv} and~\ref{assumpt:g22}, quenched localisation will occur on $N$ sites. With regard to the extra logarithmic factors appearing in the definition of $N$ in \eqref{eq:N} and in Assumption \ref{assumpt:g22}, it is possible that these are artifacts of our proof which could be removed (or at least relaxed).

The above heuristics allow us to conjecture the quenched localisation behaviour of the BTM on the integers. In particular, we expect that quenched localisation in the BTM on the integers occurs on a set of cardinality $N+1$, i.e.\ one larger than for the positive integers. The intuition is that the clustering argument described above is valid across the whole positive and negative half-lines. The extra localisation site takes into account the fact that the BTM can now escape, after leaving the cluster, in two directions. Nevertheless, formalising this heuristic presents additional technical challenges not present in the one-sided case, and we do not pursue this here.
\end{remark}

\begin{remark}
Let us draw a comparison with the BTM with regularly varying traps. In this case, it was recently shown in \cite{Croydon16} that
\[  \limsup_{t \to \infty} \sup_{x \in \mathbb{Z}} P_\sigma(X_t = x)  = 1    \quad \mathbf{P}\text{-almost-surely},\]
refining the original observation of \cite{FIN99} that the above $\limsup$ expression is strictly positive; note that here the probability measures $\mathbf{P}$ and $P_\sigma$ refer to the BTM on the integers. In other words, just as in the slowly varying case, there exist arbitrarily large times at which the probability mass of the BTM is, up to any specified error, completely localised. On the other hand, in \cite{Croydon16} it was also shown that in the regularly varying case
\[\liminf_{t \to \infty} \sup_{x \in \mathbb{Z}} P_\sigma(X_t = x)  = 0 \quad \mathbf{P}\text{-almost-surely},\]
in other words, there are also arbitrarily large times at which the BTM is completely delocalised.
\end{remark}

In light of Theorems \ref{thm:main1} and \ref{thm:main2}, it is natural to expect that the localisation set of the BTM is related to the set of `record traps'
\[  \mathcal{R} := \left\{  x \in \mathbb{Z}^+ : \sigma_x > \max_{0 \le y < x} \sigma_y \right\} . \]
The following result shows that the localisation set can actually be chosen to be a subset of $\mathcal{R}$ if and only if $N = 2$. It will become clear from our proofs that for $N > 2$ the localisation set will, at arbitrarily large times, also include certain `near records'.

\begin{theorem}[Criterion for localisation on record traps]
\label{thm:main3} Suppose Assumptions \ref{assumpt:sosv} and \ref{assumpt:g22} both hold. Then
\[ P_\sigma(X_t \in \mathcal{R} ) \to 1  \quad \mathbf{P}\text{-almost-surely} \]
if and only if $N = 2$.
\end{theorem}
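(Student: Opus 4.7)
The plan is to read off both directions of the equivalence from the almost-sure occurrence (or not) of what one might call \emph{non-record near-records}: sites $y$ strictly between consecutive record locations $x_n < y < x_{n+1}$ whose trap depth satisfies $\sigma_y \ge \tfrac{1}{2}\sigma_{x_n}$, say. Any such $y$ satisfies $\sigma_y < \sigma_{x_n} = \max_{z < y}\sigma_z$, so $y \notin \mathcal{R}$, but by the cluster heuristic of the introductory remark it is precisely these sites that extend the localisation set beyond $\mathcal{R}$.

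For the direction $N = 2 \Rightarrow P_\sigma(X_t \in \mathcal{R}) \to 1$, I would begin with the expected-count estimate
\[ \mathbf{E}\bigl[\,\#\{y \in (x_n, x_{n+1}): \sigma_y \ge \epsilon \sigma_{x_n}\}\,\bigr] \asymp d(e^n) \log(1/\epsilon) ,\]
which follows from $\mathbf{P}(\sigma_0 \in [\epsilon u, u)) \sim g(u) \log(1/\epsilon)/L(u)$ (a direct consequence of \eqref{sosv}) together with the inter-record gap $\asymp L(\sigma_{x_n}) \asymp e^n$. Assumption \ref{assumpt:g22}(b) with $N = 2$ is $\sum_n d(e^n)(\log n)^2 < \infty$, and the two logarithmic factors of slack absorb a union bound over polylogarithmically many multiplicative scales $\epsilon$ and the typical fluctuations of the inter-record block length. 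A first Borel--Cantelli argument then produces, $\mathbf{P}$-almost-surely, only finitely many records accompanied by any non-record near-record. Feeding this back into the construction of $\Gamma_t$ in Theorem \ref{thm:main1} reduces, for all large $t$, the localisation set to the current record and the next record, both of which lie in $\mathcal{R}$.

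For the converse $N > 2 \Rightarrow P_\sigma(X_t \in \mathcal{R}) \not\to 1$, I would apply the second Borel--Cantelli lemma to the events
\[ A_n := \bigl\{\,\#\{y \in (x_n, x_{n+1}) : \sigma_y \ge \tfrac{1}{2} \sigma_{x_n}\} \ge N - 2\,\bigr\} ,\]
whose probabilities satisfy $\mathbf{P}(A_n) \asymp d(e^n)^{N-2}$ by a Poisson-type approximation from the expected-count estimate above. These are non-summable by Assumption \ref{assumpt:g22}(a), and the $A_n$ are independent across $n$ up to thinning, since they depend on disjoint blocks of the i.i.d.\ sequence $\sigma$; hence $A_n$ occurs infinitely often. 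On each such $A_n$ the cluster at $x_n$ contains at least $N - 2$ non-record near-records, and the mass-splitting analysis underpinning the lower bound $\liminf_t \sup_x P_\sigma(X_t = x) \ge 1/N$ of Theorem \ref{thm:main2} places mass at least some $c > 0$ on each of them at a suitable time $t_n \to \infty$. Consequently $P_\sigma(X_{t_n} \in \mathcal{R}) \le 1 - c$ along a deterministic subsequence, ruling out $\mathbf{P}$-a.s.\ convergence to $1$.

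The main technical obstacle is the alignment step in the converse direction: one must verify that at the selected time $t_n$ the quenched mass of the BTM has indeed concentrated on the cluster around $x_n$ (rather than on a competing deeper trap that has entered the BTM's range), so that the $1/N$ spreading estimate of Theorem \ref{thm:main2} is genuinely realised on the near-records produced by BC2. This is accomplished by re-running, along the BC2 subsequence, the hitting-time and mass-competition estimates used in the proof of Theorem \ref{thm:main2}, relying on the eventual monotone decay of $g$ from Assumption \ref{assumpt:sosv} to transfer the generic-subsequence bounds to this specific one.
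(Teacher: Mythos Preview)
Your strategy is essentially the paper's, and it is correct in outline, but the paper organises the proof more economically by reading off both directions as immediate corollaries of machinery it has already built for Theorems~\ref{thm:main1} and~\ref{thm:main2}, rather than running fresh Borel--Cantelli arguments.

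For the forward direction, the paper does not set up a separate near-record count. Instead, for $t\in[t_{n-1},t_n)$ it has already shown that $\Gamma_t\subseteq\{r_{n-1},r_n\}\cup R_1\cup R_2$, and its proof of the first claim of Theorem~\ref{thm:main1} establishes $|R_1|+|R_2|\le N-2$ eventually almost-surely under Assumption~\ref{assumpt:sosv} alone. For $N=2$ this gives $R_1=R_2=\emptyset$, hence $\Gamma_t\subseteq\{r_{n-1},r_n\}\subseteq\mathcal{R}$ with no further work. Your BC1 argument on near-record counts would effectively re-prove the $|R_1|+|R_2|$ bound in the special case $N=2$; note also that you invoke Assumption~\ref{assumpt:g22}(b), whereas the paper needs only Assumption~\ref{assumpt:sosv} (and the definition of $N$) here.

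For the converse direction, your event $A_n$ is too coarse to run the balancing argument directly. The paper uses a considerably more structured favourable event $\mathcal{E}_n$ (controlling not just the existence of $N-2$ near-records but also the inter-site gaps, the residual sums on both sides of $r_n$, and the ratio $\sigma_{(n)}/\sigma_{(n-1)}$), proves $\mathcal{E}_n$ holds infinitely often via conditional Borel--Cantelli, and then shows directly that on $\mathcal{E}_n$ the mass at a carefully chosen time $t_n$ is approximately $1/N$ on each of $z_1,\dots,z_N$, of which $z_2,\dots,z_{N-1}\notin\mathcal{R}$. You correctly flag the ``alignment obstacle'' and propose to resolve it by re-running the estimates from Theorem~\ref{thm:main2}; this is exactly what the paper does, but the point is that those estimates require the full $\mathcal{E}_n$, not merely $A_n$. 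So your sketch is right in spirit but would need to be upgraded to the richer event before the balancing step goes through.
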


\begin{remark}
The regimes in which two-site localisation occurs almost-surely, and indeed occurs on record traps, are precisely the regimes identified in \cite{Croydon15} and \cite{Kasahara86} in which simplified scaling limit theorems are available (see \cite[Remark 1.5]{Croydon15} and \cite[Remark 2.4]{Kasahara86}).
\end{remark}

Finally, we observe that each of our main results on localisation in the BTM with slowly varying traps is underpinned by the analogous result regarding the sum-max ratio for sequences of i.i.d.\ slowly varying random variables. As we noted above, with high probability, the partial sums of such sequences are asymptotically dominated by the maximum. However, it turns out that in general this is not the case almost-surely, as is demonstrated by the following theorem. This provides a crucial ingredient in our arguments for the BTM, and to the best of our knowledge has not appeared in the literature before.

\begin{theorem}[Sum/max ratio for sequences of slowly varying random variables]
\label{thm:main4} Let $(\sigma_i)_{i \in \mathbb{N}}$ be an i.i.d.\ sequence of copies of $\sigma_0$. Let $m_i$ and $S_i$ denote the maximum and sum respectively of the partial sequence $(\sigma_j)_{j \le i}$ . If Assumption \ref{assumpt:sosv} holds, then, almost-surely,
\[  \liminf_{i \to \infty} \frac{S_i}{m_i}   =   1  \quad \text{and} \quad \limsup_{i \to \infty} \frac{S_i}{m_i} \le N-1 . \]
Moreover, if Assumption \ref{assumpt:g22}(a) (with $N < \infty$) also holds, then
\[ \limsup_{i \to \infty} \frac{S_i}{m_i} = N-1 .   \]
\end{theorem}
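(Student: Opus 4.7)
The proof splits into three parts: the liminf $=1$, the limsup $\leq N-1$, and the matching lower bound under Assumption \ref{assumpt:g22}(a). All three reduce to analysing $\Sigma_k := \sum_{i \leq i_k} \sigma_i$ and $M_k := \max_{i \leq i_k} \sigma_i$ along the subsequence $i_k := \lfloor e^k \rfloor$, since $S_i$ and $m_i$ are monotone in $i$ and records of $(\sigma_i)$ cluster on the scale $i \sim e^k$. A useful representation is $\sigma_i = L^{-1}(1/(1-U_i))$ with $U_i$ i.i.d.\ uniform, which couples the records of $(\sigma_i)$ to those of the uniforms: the $n$-th record of $\sigma$ has value approximately $L^{-1}(e^n)$ at time approximately $e^n$, with inter-record ratios governed by independent exponential spacings.

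The core of the proof is the upper bound $\limsup \leq N-1$. For fixed $\epsilon > 0$, I would split $\Sigma_k - M_k$ at a threshold $\alpha_k M_k$ (with $\alpha_k \to 0$) into a \emph{near-max} contribution from $\sigma_i \in (\alpha_k M_k, M_k]$ and a \emph{small} contribution from $\sigma_i \leq \alpha_k M_k$. Conditioning on $M_k = u$ with $L(u) \sim e^k$ and using the second-order slow-variation relation $L(u)/L(\alpha u) - 1 \sim g(u) \log(1/\alpha)$, the number of near-max terms (excluding the max) is approximately Poisson with parameter $\lambda_k \sim d(e^k) \log(1/\alpha_k)$, whence
\[
\mathbf{P}\bigl(\#\{i \leq i_k : \sigma_i > \alpha_k M_k\} \geq N\bigr) \lesssim \lambda_k^{N-1}/(N-1)!.
\]
The choice $\alpha_k = k^{-c}$ gives $\log(1/\alpha_k) = c \log k$, so the right-hand side matches the summand defining $N$ in \eqref{eq:N}; the first Borel--Cantelli lemma then delivers at most $N-2$ non-maximal near-max terms eventually almost surely, contributing at most $(N-2) M_k$ to $\Sigma_k - M_k$. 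A Karamata computation gives $\mathbf{E}[\sigma \mathbb{1}_{\sigma \leq c}] \sim c\,g(c)/L(c)$, so the expected small contribution is of order $\alpha_k M_k d(e^k) = o(M_k)$, and a Bernstein estimate (using the uniform bound $\alpha_k M_k$ on summands to produce super-polynomial decay in $k$) upgrades this to an almost-sure bound. Combining yields $\Sigma_k/M_k \leq N-1 + o(1)$ a.s., and a short monotonicity argument (bounding $S_i/m_i$ in intervals $[i_{k-1},i_k]$ via $\Sigma_k/M_{k-1}$ after controlling the record gap $M_k/M_{k-1}$) transfers this to all $i$.

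The liminf lower bound is trivial, and $\liminf \leq 1$ follows by evaluating $S_{i_n}/R_n = 1 + S_{i_n-1}/R_n$ at record times $i_n$: the upper bound just proved, applied at time $i_n - 1$, gives $S_{i_n-1} = O(R_{n-1})$ a.s., while slow variation of $L$ (equivalently, rapid variation of $L^{-1}$) forces $R_n/R_{n-1} \to \infty$ a.s.\ via the independent exponential spacings in the record representation, so $S_{i_n-1}/R_n \to 0$. For the matching limsup lower bound of $N-1$ under Assumption \ref{assumpt:g22}(a), I would apply the second Borel--Cantelli lemma to the independent events $B_k$, over disjoint blocks $(i_{k-1}, i_k]$, that the $k$-th block contains at least $N-2$ entries in $((1-\delta) M_k, M_k]$ besides the max: these have probability $\asymp d(e^k)^{N-2}$ for fixed $\delta$, whose sum diverges by Assumption (a), so $B_k$ occurs infinitely often, giving $\limsup \geq 1 + (N-2)(1-\delta)$; a diagonal argument in $\delta \to 0$ closes the bound. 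The main technical obstacle, I expect, is the tension in the choice of $\alpha_k$ in the upper bound: a smaller $\alpha_k$ shrinks the small residual but inflates $\log(1/\alpha_k)$ and worsens the Borel--Cantelli sum, and the polynomial choice $\alpha_k = k^{-c}$ is precisely tuned to produce the logarithmic factor in \eqref{eq:N}---which, as the authors themselves note, is very plausibly a proof artifact.
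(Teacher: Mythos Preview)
Your overall strategy---Borel--Cantelli on the count of near-maximum terms, with the threshold tuned to produce the $\log n$ factor in \eqref{eq:N}---is essentially the paper's. However, two steps in your plan do not go through as written, and the paper's choices show how to fix them.

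\textbf{Upper bound: index by records, not by $e^k$.} The paper's key simplification, which you do not use, is that between consecutive record times $m_i$ is constant while $S_i$ increases, so $S_i/m_i$ is \emph{monotone increasing on each inter-record interval}. Hence $\limsup_i S_i/m_i = \limsup_n S_{r_n-1}/\sigma_{(n-1)}$, and Borel--Cantelli need only run over record indices $n$. Your proposed transfer from $(i_k)$ to all $i$ via $S_i/m_i \le \Sigma_k/M_{k-1} = (\Sigma_k/M_k)(M_k/M_{k-1})$ fails because $M_k/M_{k-1}$ is unbounded whenever a record falls in $(i_{k-1},i_k]$, which happens for a positive fraction of $k$. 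Repairing this forces you back to analysing the ratio just before each record---i.e.\ the paper's reduction. Once indexed by records, the paper decomposes $S_{r_n-1}$ into the sum of previous records plus the sum of non-records below $\sigma_{(n-1)}$, handled respectively by an iterated conditional-moment bound (Lemmas~\ref{momentcomparison}--\ref{sumsofrecords}) and a truncated-moment estimate (Corollary~\ref{cor:sumlevel}); this is close in spirit to your near-max/small split.

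\textbf{Lower bound: conditional, not ordinary, Borel--Cantelli.} Your events $B_k$ involve the global maximum $M_k$, which depends on all earlier blocks, so they are not independent and the second Borel--Cantelli lemma does not apply as stated. The paper instead uses the \emph{conditional} Borel--Cantelli lemma with filtration $\mathcal{F}_n := \sigma(\sigma_1,\dots,\sigma_{r_{n+1}})$, applied to the event that the first $N-2$ exceedances of $(1-\varepsilon)\sigma_{(n)}$ after $r_n$ all fail to exceed $\sigma_{(n)}$; the conditional probability given $\mathcal{F}_{n-1}$ is exactly $\bigl(1-L((1-\varepsilon)\sigma_{(n)})/L(\sigma_{(n)})\bigr)^{N-2} \sim c\,g(\sigma_{(n)})^{N-2}$, whose sum diverges under Assumption~\ref{assumpt:g22}(a).

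\textbf{Liminf.} Your argument uses the bound $S_{i_n-1} = O(R_{n-1})$ from the upper-bound part, which is vacuous when $N=\infty$; but the theorem asserts $\liminf = 1$ under Assumption~\ref{assumpt:sosv} alone, with no finiteness restriction on $N$. The paper simply cites Darling's theorem that $S_i/m_i \to 1$ in probability for any slowly varying tail, which together with Fatou's lemma gives $\liminf = 1$ almost surely in one line. (Also, the exponential-spacing representation gives only $\limsup_n R_n/R_{n-1} = \infty$ a.s., not convergence to $\infty$, since $\liminf_n E_n = 0$---though the $\limsup$ is all your argument would actually need.)
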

\begin{remark}
In the special case of $L$ satisfying \eqref{eq:L}, this implies that
\[  \liminf_{i \to \infty} \frac{S_i}{m_i}   =   1  \quad \text{and} \quad \limsup_{i \to \infty} \frac{S_i}{m_i} = 1+ \left \lfloor \frac{\gamma}{1-\gamma} \right \rfloor  , \]
almost-surely. Hence, in this case, $ \lim_{i \to \infty} {S_i}/{m_i} = 1$ almost-surely if and only if $\gamma < 1/2$. For comparison, we recall that the latter limit holds in probability for all slowly varying tails \cite{Darling52}; an observation which (together with Fatou's lemma) already yields the $\liminf$ part of the previous result.
\end{remark}

\subsection{Outline of the paper}

The rest of this paper is organised as follows. In Section~\ref{sec:prelim} and Section~\ref{sec:svseq} we collect preliminary results that will be crucial in proving the main theorems. The results in Section~\ref{sec:prelim} relate to random walks in inhomogeneous trapping landscapes; the results in Section~\ref{sec:svseq} consider general properties of sequences of slowly varying random variables, including the key almost-sure bound on the ratio of the cumulative sum to the maximum stated as Theorem \ref{thm:main4} above.

In Section~\ref{sec:loc} we study upper bounds on the size of the quenched localisation set. We first give an explicit description of the localisation set $\Gamma_t$, and show that localisation does occur on this set eventually almost-surely. Here we also show that $|\Gamma_t| = 1$ with overwhelming probability, establishing the complete localisation in probability of Theorem~\ref{thm:maincl}. We next consider the almost-sure cardinality of $\Gamma_t$, showing that $|\Gamma_t| \le N$ eventually almost-surely, establishing the first claim of Theorem \ref{thm:main1} above. As a corollary, we show that, if $N = 2$, the localisation set $\Gamma_t$ is contained in the set of record traps $\mathcal{R}$, establishing one direction of Theorem \ref{thm:main3}.

Finally, in Section~\ref{sec:fav} we study the most favoured site (Theorem \ref{thm:main2} above). Since Theorem \ref{thm:maincl}, and the first part of Theorem \ref{thm:main1}, will already have been proved at this point, it will be sufficient to show that there exist arbitrarily large times at which the BTM is evenly balanced across a certain set of $N$ localisation sites. Since for $N \ge 3$, the above-mentioned $N$ sites are not all contained in the record traps $\mathcal{R}$, this will also establish the converse direction of Theorem \ref{thm:main3}.

\section{Random walks in inhomogeneous trapping landscapes}
\label{sec:prelim}

In this section we collect preliminary results on random walks in inhomogeneous trapping landscapes, in particular relating to hitting times and localisation. Note that in this section the trapping landscape $\sigma$ will always be assumed to be completely arbitrary and deterministic.

Throughout this section, fix $a, b \in \mathbb{Z}$ such that $a < b$ and let $X^{a, b}=(X^{a, b}_t)_{t \ge 0}$ denote the inhomogeneous continuous-time random walk (CTRW) in an arbitrary (deterministic) trapping landscape $\sigma=(\sigma_x)_{x \in [a, b]\cap \mathbb{Z}}$, with reflected boundary conditions at $a$ and $b$. More precisely, $X^{a, b}$ is the continuous-time Markov chain on $[a, b] \cap \mathbb{Z}$ with transition rates as at (\ref{trates}), where $y \sim x$ here means that $x$ and $y$ are nearest neighbours in $[a, b] \cap \mathbb{Z}$. Let $P^{a, b}_x$ denote the law of $X^{a, b}$ when started from the site $x \in [a, b] \cap \mathbb{Z}$. Henceforth, and throughout the rest of the paper, we shall refer to $X^{a, b}$ as the `inhomogeneous CTRW on $[a, b] \cap \mathbb{Z}$ in the trapping landscape $\sigma$'.

\subsection{Hitting times for inhomogeneous CTRWs}
We start by considering upper and lower bounds on the hitting times of the boundary by $X^{a, b}$. We let $\tau_a$ and $\tau_b$ denote the hitting time of the boundary at $a$ and $b$ respectively, i.e.\
\[ \tau_a := \inf\{ s: X_s^{a, b} = a \} \quad \text{and} \quad  \tau_b := \inf\{ s: X_s^{a, b} = b \} ,\]
and let $\tau$ be the hitting time of either boundary, i.e.\
\[ \tau := \min\{ \tau_a, \tau_b\}.\]

\begin{proposition}[Upper bounds on hitting times]
\label{prop:hittingtimeub}
For each $x \in [a, b] \cap \mathbb{Z}$ and $t > 0$,
\[    P^{a, b}_x \left( \tau_b \geq  t \right) \leq 2 t^{-1} \, (b-x) \sum_{a \le z < b} \sigma_z  .\]
Moreover, if ${S} \subseteq [x, b) \cap \mathbb{Z}$, then
\[    P^{a, b}_x ( \tau_b \geq t ) \leq 2 t^{-1} \bigg( (b-x) \sum_{ \{a \le z < b\} \setminus {S} } \sigma_z  +  |{S}| \max_{z \in {S}}\{b-z\}   \max_{z \in {S}} \{ \sigma_z \} \bigg)  .\]
\end{proposition}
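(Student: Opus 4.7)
The plan is to apply Markov's inequality, reducing both bounds to upper bounds on the expected hitting time $h(x) := E^{a,b}_x[\tau_b]$, and then to compute $h(x)$ in closed form using the birth-death structure of $X^{a,b}$.

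First, I would solve the associated Dirichlet problem: $h$ satisfies $\mathcal{L} h = -1$ on $[a,b) \cap \mathbb{Z}$ and $h(b) = 0$, where $\mathcal{L}$ is the generator determined by the rates in \eqref{trates}. Since the reflection at $a$ means that $a$ has the single neighbour $a+1$, the system reads
\begin{align*}
h(a+1) - h(a) &= -2\sigma_a, \\
h(x+1) + h(x-1) - 2h(x) &= -2\sigma_x, \quad x \in (a,b) \cap \mathbb{Z},\\
h(b) &= 0.
\end{align*}
Setting $g(x) := h(x+1) - h(x)$ turns the interior equation into the first-order recurrence $g(x) = g(x-1) - 2\sigma_x$ with initial condition $g(a) = -2\sigma_a$. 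Iterating gives $g(y) = -2 \sum_{z=a}^{y} \sigma_z$, and telescoping from $x$ up to $b$ then yields the exact formula
\[
h(x) \,=\, 2 \sum_{y=x}^{b-1} \sum_{z=a}^{y} \sigma_z \,=\, 2 \sum_{z=a}^{b-1} \sigma_z \bigl(b - \max(x,z)\bigr).
\]

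From this, both bounds follow by elementary estimates. For the first bound, I would use $b - \max(x,z) \le b - x$ uniformly in $z$ to get $h(x) \le 2(b-x) \sum_{a \le z < b} \sigma_z$, and Markov's inequality then gives the claim. For the second bound, the key observation is that the hypothesis $S \subseteq [x, b) \cap \mathbb{Z}$ forces $\max(x,z) = z$ for every $z \in S$, so splitting the sum according to $z \in S$ or $z \notin S$ produces
\[
h(x) \,\le\, 2(b-x) \sum_{\{a \le z < b\} \setminus S} \sigma_z \,+\, 2 |S| \max_{z \in S}\{b-z\} \max_{z \in S}\{\sigma_z\},
\]
and Markov's inequality completes the proof. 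There is no substantive obstacle here: the argument is a direct birth-death calculation, and the only point requiring care is the splitting step, where the hypothesis $S \subseteq [x, b) \cap \mathbb{Z}$ is precisely what allows one to replace the factor $b - \max(x,z)$ by the tighter $b - z$ on the $S$-part of the sum.
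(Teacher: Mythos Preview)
Your proof is correct and essentially the same as the paper's: both compute $E^{a,b}_x[\tau_b]$ exactly, bound it, and apply Markov's inequality. The only difference is cosmetic --- the paper derives the same expectation $2\sum_{a\le z<b}\min\{b-z,b-x\}\,\sigma_z$ by counting expected visits to each site and multiplying by the mean holding time, whereas you solve the generator equation directly; note that your $b-\max(x,z)$ equals the paper's $\min\{b-z,b-x\}$.
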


\begin{proof}
By basic properties of simple random walks, the expected number of times the process $X^{a, b}$, started from $x$, visits a site $z \in (a, b) \cap \mathbb{Z}$ prior to $\tau_b$ is equal to $2 \min \{  b-z, b-x  \}$, with the mean holding time at each visit to $z$ being $\sigma_z$. Similarly, the expected number of times $X^{a, b}$ hits the site $a$ prior to $\tau_b$ is equal to $(b-x)$ with the mean holding time at each visit being $2 \sigma_a$. Hence
\[ E_x^{a, b} \left[ \tau_b \right]  =  \sum_{a \le z < b} 2  \min \{  b-z, b-x  \} \, \sigma_z .\]
For the first statement, we bound this expectation simply by
\[ E_x^{a, b} \left[ \tau_b \right]  \le  2 ( b-x) \, \sum_{a \le z < b}  \sigma_z ,\]
and apply Markov's inequality. For the second statement, we instead bound the expectation by
\[ E_x^{a, b} \left[ \tau_b \right]  \le  2 ( b-x) \, \sum_{\{a \le z < b\} \setminus {S} }  \sigma_z  +  2 |{S}| \max_{z \in {S}}\{b-z\}   \max_{z \in {S}} \{ \sigma_z \}  ,\]
and again apply Markov's inequality.
\end{proof}

\begin{proposition}[Lower bound on hitting times]
\label{prop:hittingtimelb}
For each $x \in [a, b) \cap \mathbb{Z}$, $z \in [x, b) \cap \mathbb{Z}$ and $t > 0$,
\[    P^{a, b}_x ( \tau_b \le  t) \leq \frac{t}{2 (b-z) \sigma_z } .\]
Moreover, for each $x \in (a, b) \cap \mathbb{Z}$ and $t > 0$,
\[    P^{a, b}_x (  \tau \le t) \leq \frac{t}{\min\{x-a,b-x\} \sigma_x } .\]
\end{proposition}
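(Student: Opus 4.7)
My plan is to prove both bounds by a common route: for a suitable tagged site $y$, analyse the total sojourn time
\[ T_y := \int_0^{\tau_*} \mathbf{1}\{X^{a,b}_s = y\}\,ds \]
spent at $y$ before reaching the relevant target ($\tau_* = \tau_b$ for the first inequality, $\tau_* = \tau$ for the second), show that $T_y$ is exponentially distributed, and then combine the trivial bound $\tau_* \ge T_y$ with the elementary inequality $1 - e^{-u} \le u$ to convert a tail estimate for $T_y$ into one for $\tau_*$.

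For the first inequality I would first reduce to the case $x = z$: since $z \in [x, b)$ and $X^{a,b}$ is nearest-neighbour, any trajectory from $x$ to $b$ must pass through $z$, so the strong Markov property at $\tau_z$ gives $P^{a,b}_x(\tau_b \le t) \le P^{a,b}_z(\tau_b \le t)$. Under $P^{a,b}_z$, each visit to $z$ carries an $\operatorname{Exp}(1/\sigma_z)$ holding time, and the subsequent step goes to $z \pm 1$ with probability $1/2$ each. An excursion starting leftward from $z$ must return to $z$ (as $b$ cannot be reached without revisiting $z$), while an excursion starting rightward reaches $b$ before returning to $z$ with probability $1/(b-z)$ by gambler's ruin on $\{z,\dots,b\}$. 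Thus the number $N_z$ of visits to $z$ before $\tau_b$ is geometric with success probability $p = 1/(2(b-z))$, independent of the holding times. The standard identity that a geometric($p$)-sum of i.i.d.\ $\operatorname{Exp}(\lambda)$ variables is itself $\operatorname{Exp}(p\lambda)$ now yields $T_z \sim \operatorname{Exp}\bigl(1/(2(b-z)\sigma_z)\bigr)$, whence $P^{a,b}_z(\tau_b \le t) \le P(T_z \le t) \le t/(2(b-z)\sigma_z)$.

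For the second inequality I would apply the same template directly at $y = x$, the only change being that the walk can now escape on either side. Gambler's ruin must therefore be applied separately on $\{x,\dots,b\}$ and on $\{a,\dots,x\}$, yielding that the number of visits to $x$ before $\tau$ is geometric with success probability $p = \tfrac{1}{2}\bigl(\tfrac{1}{b-x} + \tfrac{1}{x-a}\bigr)$. Hence $T_x \sim \operatorname{Exp}(p/\sigma_x)$ and $P^{a,b}_x(\tau \le t) \le tp/\sigma_x$, and the stated form follows from the crude bound $p \le 1/\min\{x-a,\,b-x\}$ (each of the two reciprocals in $p$ is itself at most $1/\min\{x-a,\,b-x\}$). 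I do not foresee any real obstacle: the only point requiring a moment's care is the independence of $N_y$ from the holding-time family at $y$, which is standard from the usual construction of the CTRW in which the jump chain and the exponential clocks form independent structures. Everything else is an elementary gambler's ruin computation combined with the geometric-sum-of-exponentials identity.
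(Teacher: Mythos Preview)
Your proof is correct and follows essentially the same argument as the paper's: both recognise that the sojourn time at the tagged site before hitting the target is exponential (a geometric number of visits times independent exponential holding times) and then apply $1 - e^{-u} \le u$. The only minor differences are that you first reduce to $x = z$ via the strong Markov property whereas the paper works directly from $x$, and that your description of the holding time and step distribution at $z$ tacitly assumes $z \neq a$; the boundary case $z = a$ (where the holding-time mean doubles but the step is deterministic) yields the same exponential parameter by cancellation, as the paper explicitly remarks.
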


\begin{proof}
Consider the first statement, and note that $\tau_b$ is bounded below by the time spent by $X^{a, b}$ at the site $z$ prior to the time $\tau_b$. Assume for the moment that $x \neq a$. By basic properties of simple random walks, the number of times $X^{a, b}$, started from $x$, visits a site $z \in [x, b) \cap \mathbb{Z}$ prior to $\tau_b$ is distributed as a geometric random variable (supported on $\{1, 2, \ldots\}$) with mean $2(b-z)$. Moreover, the time spent at each visit is an independent exponential random variable with mean $\sigma_z$. Hence, the time spent at $z$ prior to $\tau_b$ is exponentially distributed with mean $2(b-z)\sigma_z$. This implies that
\[    P^{a, b}_x \left( \tau_b \le   t \right) \le  1 - \exp\left\{ - \frac{t}{2 (b-z) \sigma_z }    \right\} \leq\frac{t}{2 (b-z) \sigma_z } .\]
If $x = a$ the proof is identical, since the extra factors of two in the means of the number of visits and holding time distributions exactly cancel each other out.

The proof of the second statement is similar. This time, consider that the number of times $X^{a, b}$ visits $x$ prior to the time $\tau$ is distributed as a geometric random variable (supported on $\{1, 2, \ldots\}$) with mean
\[  2 \left( \frac{1}{x-a} + \frac{1}{b-x}  \right)^{-1}   \ge  \min \{ x-a, b-x \} , \]
and the result follows as before.
\end{proof}

\subsection{Localisation of inhomogeneous CTRWs}

To finish the section, we state a simple localisation property of $X^{a, b}$, expressed in terms of the trapping landscape $\sigma$.

\begin{proposition}
\label{prop:local}
For each $x \in [a, b] \cap \mathbb{Z}$ and ${S} \subseteq [a, b]\cap \mathbb{Z}$ such that $x \notin {S}$,
\[ \sup_{t \geq 0} P^{a, b}_x \left( X^{a, b}_t \in {S} \right) \leq  \frac{\sum_{z \in {S}} \sigma_z}{\sigma_x }  .\]
\end{proposition}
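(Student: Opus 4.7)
The plan is to exploit reversibility: the CTRW $X^{a,b}$ has a natural reversible measure proportional to the trap landscape itself, and the desired bound then falls out of a one-line symmetry of the transition kernel combined with the trivial bound $p_t \le 1$.

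\textbf{Step 1: Identify the reversible measure.} First I would verify that the measure $\pi(z) := \sigma_z$ on $[a,b]\cap\mathbb{Z}$ satisfies detailed balance for $X^{a,b}$. For any pair of adjacent sites $z \sim z'$ in $[a,b]\cap\mathbb{Z}$, the rates $w_{z\to z'} = 1/(2\sigma_z)$ and $w_{z'\to z} = 1/(2\sigma_{z'})$ give
\[ \pi(z) w_{z\to z'} = \frac{\sigma_z}{2\sigma_z} = \frac{1}{2} = \frac{\sigma_{z'}}{2\sigma_{z'}} = \pi(z') w_{z'\to z}. \]
Notice that detailed balance is a purely pairwise condition, so it is insensitive to the presence of the reflected boundary at $a$ and $b$; in particular, no special treatment of the endpoints is needed.

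\textbf{Step 2: Symmetrise the transition kernel.} Writing $p_t(x,y) := P^{a,b}_x(X^{a,b}_t = y)$, reversibility yields
\[ \sigma_x\, p_t(x,y) = \sigma_y\, p_t(y,x) \quad \text{for all } x,y \in [a,b]\cap\mathbb{Z},\ t \ge 0. \]

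\textbf{Step 3: Sum and bound trivially.} Using the symmetry from Step 2, for any $t \ge 0$,
\[ P^{a,b}_x\bigl(X^{a,b}_t \in S\bigr) = \sum_{y \in S} p_t(x,y) = \frac{1}{\sigma_x}\sum_{y \in S} \sigma_y\, p_t(y,x) \le \frac{1}{\sigma_x}\sum_{y \in S}\sigma_y, \]
where in the final step I use the trivial bound $p_t(y,x) \le 1$. Taking the supremum over $t \ge 0$ then gives the proposition (the hypothesis $x \notin S$ is not needed for the argument, but it is precisely what renders the bound non-trivial).

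There is no substantial obstacle here: the result is a standard consequence of reversibility, and the only content is recognising the correct reversible measure $\pi = \sigma$. I do not anticipate needing any of the hitting-time estimates from Propositions \ref{prop:hittingtimeub} and \ref{prop:hittingtimelb} for this step.
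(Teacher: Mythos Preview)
Your proof is correct and follows essentially the same approach as the paper: identify $\sigma$ as a reversible measure via detailed balance, use the resulting symmetry $\sigma_x p_t(x,y) = \sigma_y p_t(y,x)$, and bound $p_t(y,x)\le 1$ before summing over $S$. The only cosmetic difference is that the paper normalises $\pi$ to a probability measure and phrases the bound as $\pi(z)/\pi(x)$, whereas you work with the unnormalised measure directly; the content is identical.
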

\begin{proof}
First note that, by standard continuous-time Markov chain theory, the process $X^{a, b}$ has a unique equilibrium distribution $\pi$. From the detailed balance equations, it is straightforward to check that $\pi \propto \sigma$, i.e. for each $z \in [a, b] \cap \mathbb{Z}$,
\[\pi(z) =\frac{\sigma_z}{\sum_{a \le y \le b}\sigma_y} . \]
Hence, using the reversibility of the process, for each $z \in [a, b] \cap \mathbb{Z}$ and time $t \ge 0$,
\[P^{a, b}_x  \left( X_t = z \right) = P^{a, b}_z  \left( X_t = x \right) \frac{\pi(z)}{\pi(x)} \le \frac{\pi(z)}{\pi(x)} = \frac{\sigma_z}{\sigma_x} . \]
Summing over ${S}$ yields the result.
\end{proof}

\section{Sequences of slowly varying random variables}
\label{sec:svseq}

In this section we collect results on i.i.d.\ sequences of copies of $\sigma_0$. Throughout this section we shall assume that $L$ is slowly varying, but we shall not necessarily assume that $L$ satisfies the second-order slow-variation in Assumption \ref{assumpt:sosv} or, indeed Assumption~\ref{assumpt:g22}, unless we specify this explicitly. We first prove preliminary results on general i.i.d.\ sequences of slowly varying random variable; these relate to exceedences, records, and partial sums of such sequences. We then apply these results to establish the almost-sure bound on the ratio of the sum to the maximum of the sequence $(\sigma_i)_{i \in \mathbb{N}}$ of Theorem \ref{thm:main4}. Finally, we study certain types of `hyperbolic' exceedence; the relevance of these exceedences to the BTM will be made clear in Section~\ref{sec:loc}.

Before proceeding, we shall first recall a useful consequence of second-order slow-variation Assumption \ref{assumpt:sosv} for certain expectations involving $\sigma_0$; the spirit is similar to that of de Haan's theorem (see \cite[Section 3.7]{Bingham87}). We also state a weaker version of the result which holds for general slowly varying tails; the spirit is similar to that of Karamata's theorem.

\begin{proposition}[See {\cite[Proposition 5.1]{Croydon15}}]
\label{prop:sosv}
Assume $L$ satisfies Assumption \ref{assumpt:sosv}. Let $f:(0,\infty) \to \mathbb{R}^+$ be a continuously differentiable function and $I \subseteq (0,\infty)$ an interval (which may be unbounded), and suppose there exists a $\delta > 0$ for which: (i) $f(t)\mathbf{1}_{\{t\in I\}} = o(t^\delta)$ as $t \rightarrow 0$; and (ii) both $f'(t) t^\delta$ and $f'(t) t^{-\delta}$ are integrable over the interval $I$. Then the function
\[ \Gamma(n) := \mathbf{E} \left[ f(\sigma_0 / n) \mathbf{1}_{ \{ \sigma_0/n \in I \}} \right]\]
satisfies
\[\lim_{n \to \infty} \frac{L(n) \Gamma(n)}{g(n)} =- \lambda  \int_I f'(t) \log t \, dt\]
for some constant $\lambda > 0$ that only depends on $L$ and $g$. Moreover, even if $L$ does not satisfy Assumption \ref{assumpt:sosv} but is still slowly varying, we have that
\[\lim_{n \to \infty} L(n) \Gamma(n) = 0 .\]
\end{proposition}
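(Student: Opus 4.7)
The plan is to reduce $\Gamma(n)$ to an integral of $f'$ against the tail $1/L(nt)$ via integration by parts, and then substitute the second-order expansion of $L$ inside the resulting integral. Writing $I = (a,b)$ and using $\mathbf{P}(\sigma_0/n > t) = 1/L(nt)$, a Stieltjes integration by parts (valid since $L$ is continuous under Assumption \ref{assumpt:sosv} and $f$ is $C^1$) gives
\[
\Gamma(n) = \frac{f(a^+)}{L(na)} - \frac{f(b^-)}{L(nb)} + \int_I \frac{f'(t)}{L(nt)} \, dt.
\]
Conditions (i) and (ii) are tailored so that the boundary limits $f(a^+)$, $f(b^-)$ exist and are finite even when $a = 0$ or $b = \infty$: (i) forces $f(a^+) = 0$ whenever $0 \in \overline{I}$, while (ii) ensures $f$ has a finite limit at the upper endpoint and that the integrand above is absolutely integrable.

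For the weaker statement under only slow variation, I would multiply through by $L(n)$. Potter's bounds for slowly varying $L$ give $L(n)/L(nt) \le C\max(t^\epsilon, t^{-\epsilon})$ for any $\epsilon \in (0, \delta)$ and $n$ large, so $|f'(t) L(n)/L(nt)|$ is dominated by $C|f'(t)|\max(t^\epsilon, t^{-\epsilon})$, which is integrable over $I$ by (ii). Since $L(n)/L(nt) \to 1$ pointwise, dominated convergence gives $\int_I f'(t) L(n)/L(nt)\, dt \to f(b^-) - f(a^+)$, while the boundary terms contribute $f(a^+) - f(b^-)$; these cancel to give $L(n)\Gamma(n) \to 0$.

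For the refined statement under Assumption \ref{assumpt:sosv}, the strategy is to upgrade the pointwise second-order expansion \eqref{sosv} to a uniform version (analogous to the uniform convergence theorem and Potter's bounds, but at the second order; see \cite[Chapter 3]{Bingham87}), yielding that for some $\lambda > 0$,
\[
\frac{L(n)/L(nt) - 1}{g(n)} \longrightarrow -\lambda \log t \quad (n \to \infty)
\]
pointwise in $t > 0$, with a uniform domination $|(L(n)/L(nt) - 1)/g(n)| \le C(1 + |\log t|)\max(t^\epsilon, t^{-\epsilon})$ for $n$ large. On multiplying the integration-by-parts identity by $L(n)/g(n)$ and expanding, the $1/g(n)$-scale contributions from the boundary and from the integral cancel exactly, by the same bookkeeping that proved $L(n)\Gamma(n) \to 0$ in the weak case. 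Dominated convergence applied to the remaining $O(1)$ terms, with domination provided by the bound above together with (ii), yields
\[
\lim_n \frac{L(n)\Gamma(n)}{g(n)} = -\lambda\left[f(a^+)\log a - f(b^-)\log b + \int_I f'(t)\log t\, dt\right],
\]
and conditions (i)--(ii) are precisely what is needed to make the $f(\cdot)\log(\cdot)$ boundary terms vanish (using $f(t)\log t = o(t^\delta\log t) \to 0$ as $t \to 0$, and an analogous check at the upper endpoint), giving the claimed identity.

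The hard part is establishing the uniform second-order Potter bound, particularly near $t = 0$ and $t = \infty$ when $I$ is unbounded; here continuity of $L$ and eventual monotonicity of $g$ in Assumption \ref{assumpt:sosv} allow a bootstrap from the pointwise hypothesis \eqref{sosv} in the spirit of the classical proof of Potter's bounds. A secondary subtlety is the cancellation of the leading $1/g(n)$ contributions, which diverge individually but combine exactly because of the integration-by-parts structure: the proof must handle the boundary and the integral together, rather than bounding the pieces separately, to see this cancellation.
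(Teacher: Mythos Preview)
The paper does not give a proof here: it cites \cite[Proposition~5.1]{Croydon15} (the case $I=(0,\infty)$) and remarks that the extension to general $I$ and the weak statement are proved ``in an identical manner''. Your sketch --- integration by parts against the tail $1/L(nt)$, then Potter-type domination and dominated convergence, upgraded to second-order (de Haan) uniform bounds for the refined claim --- is the standard machinery for such results and is presumably what the cited argument does.

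There is, however, a real gap in your final step. You assert that conditions (i)--(ii) force the boundary contributions to vanish. This is correct at $a=0$ via (i), and trivially at any endpoint equal to $1$; but at $b=\infty$ condition (ii) only guarantees that $f$ has a \emph{finite} limit, not that the limit is zero. Take $f\equiv 1$ on $I=(1,\infty)$: both (i) and (ii) hold (since $f'\equiv 0$), yet $\Gamma(n)=\mathbf{P}(\sigma_0>n)=1/L(n)$ and $L(n)\Gamma(n)=1\not\to 0$, contradicting even the weak conclusion. In your bookkeeping the discrepancy arises because for $b=\infty$ the boundary term $f(b^-)L(n)/L(nb)$ tends to $0$ rather than to $f(b^-)$, so the claimed cancellation against $\int_I f'=f(b^-)-f(a^+)$ leaves a residual $f(\infty)$. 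Thus (i)--(ii) as stated are not quite sufficient for general $I$; one also needs $f(t)\to 0$ (indeed $f(t)\log t\to 0$, for the refined claim) at any infinite endpoint. This is an imprecision in the extension of the proposition rather than a defect in your method: every application the paper makes --- $f(t)=t^{-1}$ on $(1,\infty)$ in Lemma~\ref{lem:ex}, and $f(t)=t$ or $f(t)=t^{1/k}$ on $(0,1]$ elsewhere --- does satisfy this extra requirement, so your argument goes through unchanged in those cases.
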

\begin{remark}
Note that the first statement of this result is a very slight generalisation of \cite[Proposition 5.1]{Croydon15}, which is recovered by setting $I := (0,\infty)$. It is proved in an identical manner. The second statement is also proved in an identical manner.
 \end{remark}

\subsection{Preliminary results: Exceedences, records, and partial sums}
\label{subsec:prelim}

Our preliminary results on general sequences of slowly varying random variables are split into three categories, containing results on: (i) first exceedences of levels; (ii) records of the sequence; and (iii) partial sums. Note that some of the results in this section hold only if Assumption \ref{assumpt:sosv} is satisfied; Assumption \ref{assumpt:g22} will not be relevant to this section.

Note that when we describe a collection $(X_i)_{i\in I}$ of non-negative random variables as being bounded above or bounded below in probability we mean that $(X_i)_{i\in I}$ or $(1/X_i)_{i\in I}$, respectively, is tight.

\subsubsection{First exceedences of levels}
For a level $x > 0$, let $i_x$ denote the index of the first exceedence of $x$ in the sequence $(\sigma_i)_{i \in \mathbb{N}}$, that is $ i_x := \min \{ i : \sigma_i > x \}$. Further, denote by
$ i_x^- := {\rm{argmax}} \{ \sigma_i : i < i_x  \}$.

\begin{lemma}[Typical exceedences]
\label{lem:iprob}
As $x \to \infty$,
 \[   \frac{i_x}{ L(x) } ,\quad \frac{L(\sigma_{i_x})}{L(x)} - 1,\quad \frac{L(\sigma_{i^-_x})}{L(x)}   \quad \text{and} \quad \ 1-\frac{L(\sigma_{i^-_x})}{L(x)}   \]
are all bounded above and below in probability.
\end{lemma}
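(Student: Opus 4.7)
The plan is to derive each of the four tightness claims from the basic distributional identity $\mathbf{P}(\sigma_0 > u) = 1/L(u)$, which admits an explicit (or near-explicit) computation of the joint law of $(i_x, \sigma_{i_x}, \sigma_{i_x^-})$. The general strategy will be, for each quantity, to exhibit a non-trivial limit distribution supported on an interval bounded away from both $0$ and $\infty$, from which tightness in both directions follows immediately.

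The easiest piece is $i_x/L(x)$: since each $\sigma_i$ independently exceeds $x$ with probability $p_x := 1/L(x)$, the index $i_x$ is geometric with parameter $p_x$, so $p_x i_x$ converges in distribution to $\mathrm{Exp}(1)$. For $L(\sigma_{i_x})/L(x) - 1$, I would use that $\sigma_{i_x}$ is independent of $i_x$ and distributed as $\sigma_0$ conditioned on $\{\sigma_0 > x\}$; the identity $\mathbf{P}(\sigma_0 > y \mid \sigma_0 > x) = L(x)/L(y)$ for $y \ge x$ then gives, for $w \ge 1$, $\mathbf{P}(L(\sigma_{i_x}) > wL(x)) \approx 1/w$, so $L(\sigma_{i_x})/L(x)$ is approximately $\mathrm{Pareto}(1)$-distributed and subtracting $1$ produces a non-degenerate law on $(0, \infty)$, yielding both tightness bounds.

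The third and fourth claims will follow from a single race-type computation for $L(\sigma_{i_x^-})/L(x)$. Fix $c \in (0, 1)$ and pick $y$ with $L(y) \approx cL(x)$ (so $y < x$). The event $\{L(\sigma_{i_x^-}) \le cL(x)\}$ coincides (up to possible discreteness of $L$) with the event that the first sample exceeding $y$ is the same as the first sample exceeding $x$; since at each step the conditional probability of exceeding $x$ given that the sample exceeds $y$ is $L(y)/L(x) \approx c$, independent of the step, I expect to deduce
\[
\mathbf{P}(L(\sigma_{i_x^-}) \le cL(x)) \approx \frac{1/L(x)}{1/L(y)} = c,
\]
so that $L(\sigma_{i_x^-})/L(x)$ is asymptotically uniform on $(0, 1)$. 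Note that $\sigma_{i_x^-} \le x$ automatically (all of $\sigma_1, \ldots, \sigma_{i_x-1}$ are at most $x$), so $L(\sigma_{i_x^-})/L(x) \in (0, 1]$, and the uniform limit gives two-sided tightness for both $L(\sigma_{i_x^-})/L(x)$ and $1 - L(\sigma_{i_x^-})/L(x)$.

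The only technical point to monitor is that $L$ is assumed merely slowly varying (not continuous), so the identities $\mathbf{P}(\sigma_0 > u) = 1/L(u)$ and $L(L^{-1}(v)) = v$ may be off by discrete jumps; however, replacing equalities with the appropriate one-sided stochastic bounds preserves all the non-degeneracy required, so no genuine obstacle is anticipated, and the argument is essentially a collection of direct distributional computations.
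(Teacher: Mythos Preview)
Your proposal is correct and the direct distributional computations you outline all go through: $i_x$ is geometric with parameter $1/L(x)$, giving the $\mathrm{Exp}(1)$ limit; $\sigma_{i_x}$ has the overshoot law $\mathbf{P}(\sigma_{i_x}>u)=L(x)/L(u)$ for $u\ge x$, giving the Pareto$(1)$ limit for $L(\sigma_{i_x})/L(x)$; and your ``race'' argument shows $\mathbf{P}(\sigma_{i_x^-}\le y)=\mathbf{P}(i_y=i_x)=L(y)/L(x)$ for $y<x$, so $L(\sigma_{i_x^-})/L(x)$ is (essentially) uniform on $(0,1)$. The only care needed is when $i_x=1$ (so $i_x^-$ is vacuous), but this has probability $1/L(x)\to 0$, and the discreteness of $L$ is handled by one-sided bounds exactly as you say.

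The paper takes a different, more structural route: it proves a single point-process limit
\[
\sum_{i\in\mathbb{N}}\delta_{(i/n,\,L(\sigma_i)/n)}\ \Rightarrow\ \nu,
\]
where $\nu$ is Poisson with intensity $v^{-2}\,du\,dv$, and then reads off all four statements as continuous functionals of $\nu$ (the first exceedance time, its height, and the running maximum before it). Your approach is more elementary and has the pleasant by-product of identifying the explicit limit laws ($\mathrm{Exp}(1)$, Pareto$(1)$, Uniform$(0,1)$), whereas the paper's argument is more unified --- one convergence statement yields all four claims at once --- and would extend more readily if further functionals of the record structure were needed later.
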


\begin{proof} We first note that
\[\sum_{i\in\mathbb{N}}\delta_{(\frac{i}{n},\frac{L(\sigma_i)}{n})}\rightarrow \nu=\sum_{i}\delta_{(u_i,v_i)},\]
in distribution as random measures on $\mathbb{R}^+\times\mathbb{R}^+$, where $\nu$ is a Poisson random measure with intensity $v^{-2}dudv$. (Here $\delta_{(u,v)}$ is the probability measure placing all its mass at $(u,v)$.) It follows that
\begin{equation}\label{poisson}
\min\left\{\frac{i}{n}:\:\frac{L(\sigma_i)}{n}>1\right\}\rightarrow \inf\left\{u_i:\:v_i>1\right\}
\end{equation}
in distribution, where the limit is a $(0,\infty)$-valued random variable. Taking $n=L(x)$ in the above yields $i_x/L(x)$ converges in distribution, and so is bounded above and below in probability. Moreover, ${L(\sigma_{i_x})}/{L(x)}$ converges in distribution to the $v_i$ such that $(u_i,v_i)$ is an atom of $\nu$ and $u_i$ obtains the infimum in (\ref{poisson}). Since the latter is a $(1,\infty)$-valued random variable, the second claim holds. Similarly, ${L(\sigma_{i^-_x})}/{L(x)}$ converges to maximum $v_j$ such that $(u_j,v_j)$ is an atom of $\nu$ and $u_j$ is strictly less than the infimum on the right-hand side of (\ref{poisson}). Since this is a $(0,1)$-valued random variable, the proof is complete.
\end{proof}

\begin{lemma}[Level/exceedence ratio]
\label{lem:ex}
Assume $L$ satisfies Assumption \ref{assumpt:sosv}. Then there exists a $c > 0$ such that, as $x \to \infty$,
\[ \mathbf{E} \left[  \sigma_{i_x}^{-1} \right] < c \,  x^{-1} g(x)   \]
eventually.
\end{lemma}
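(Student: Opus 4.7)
The plan is to apply Proposition~\ref{prop:sosv} directly after rewriting $\mathbf{E}[\sigma_{i_x}^{-1}]$ as a tail-truncated expectation of $\sigma_0^{-1}$.

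First, I would observe that, since $(\sigma_i)_{i \in \mathbb{N}}$ is i.i.d., conditional on $\{i_x = n\}$ the value $\sigma_n$ is independent of $(\sigma_i)_{i < n}$ and distributed as $\sigma_0$ conditioned on $\{\sigma_0 > x\}$. Hence $\sigma_{i_x}$ has the same law as $\sigma_0$ given $\{\sigma_0>x\}$, and
\[\mathbf{E}\bigl[\sigma_{i_x}^{-1}\bigr] = \frac{\mathbf{E}\bigl[\sigma_0^{-1} \mathbf{1}_{\{\sigma_0 > x\}}\bigr]}{\mathbf{P}(\sigma_0 > x)} = L(x) \, \mathbf{E}\bigl[\sigma_0^{-1} \mathbf{1}_{\{\sigma_0 > x\}}\bigr].\]

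Next, I would apply Proposition~\ref{prop:sosv} with $f(t) = t^{-1}$ and $I = (1, \infty)$. Taking $n = x$ in the notation of that proposition gives
\[\Gamma(x) = \mathbf{E}\bigl[(\sigma_0/x)^{-1} \mathbf{1}_{\{\sigma_0 > x\}}\bigr] = x \, \mathbf{E}\bigl[\sigma_0^{-1} \mathbf{1}_{\{\sigma_0 > x\}}\bigr].\]
The regularity hypotheses are easy to verify: condition (i) is vacuous since $f \mathbf{1}_I$ vanishes in a neighbourhood of the origin, and for (ii) one has $|f'(t) t^{\pm \delta}| = t^{\pm \delta - 2}$, which is integrable on $(1,\infty)$ for any $\delta \in (0,1)$. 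The proposition then yields
\[\lim_{x \to \infty} \frac{L(x) \Gamma(x)}{g(x)} = - \lambda \int_1^\infty f'(t) \log t \, dt = \lambda \int_1^\infty t^{-2} \log t \, dt = \lambda,\]
where the final equality is a one-line integration by parts. Combining this with the earlier identity gives $\mathbf{E}[\sigma_{i_x}^{-1}] \sim \lambda \, x^{-1} g(x)$ as $x \to \infty$, so the stated bound follows for any $c > \lambda$ and all sufficiently large $x$.

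Essentially the whole argument is a single application of Proposition~\ref{prop:sosv}, and I do not anticipate any substantive obstacle; the only care needed is in selecting an $f$ and $I$ that simultaneously reproduce the correct truncated moment and meet the integrability hypotheses, which is what motivates the choice $f(t)=t^{-1}$, $I=(1,\infty)$.
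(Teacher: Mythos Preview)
Your proposal is correct and follows essentially the same approach as the paper: rewrite $\mathbf{E}[\sigma_{i_x}^{-1}]$ as $L(x)\,\mathbf{E}[\sigma_0^{-1}\mathbf{1}_{\{\sigma_0>x\}}]$, then apply Proposition~\ref{prop:sosv} with $f(t)=t^{-1}$ on $I=(1,\infty)$. You have been a little more explicit than the paper in verifying the integrability hypotheses and in evaluating the integral $\int_1^\infty t^{-2}\log t\,dt=1$, but the argument is otherwise identical.
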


\begin{proof}
We compute as follows
\[\mathbf{E} \left[  \sigma_{i_x}^{-1} \right] = L(x) \, \mathbf{E}   \left[  \sigma_0^{-1}  \mathbf{1}_{\{\sigma_0 > x \} } \right]
  =  x^{-1} L(x) \, \mathbf{E}   \left[ f(\sigma_0/x)\mathbf{1}_{\{\sigma_0/x > 1 \}}  \right] ,\]
 where $f(x) := x^{-1} $. Applying the first statement of Proposition \ref{prop:sosv} we deduce that
 \[    \mathbf{E}   \left[ f(\sigma_0/x) \mathbf{1}_{\{\sigma_0/x > 1 \}} \right] \sim \frac{ \lambda g(x) }{L(x)} \int_1^\infty t^{-2} \log t \, dt , \]
which yields the result.
\end{proof}

\subsubsection{Records of the sequence}

For $n \in \mathbb{N}$, let $r_n$ indicate the index of the $n^{\rm{th}}$ record of the sequence $(\sigma_i)_{i \in \mathbb{N}}$, and abbreviate $\sigma_{(n)} := \sigma_{r_n}$.

\begin{lemma}[Bounds for records]
\label{lem:probrecord}
Assume $L$ satisfies Assumption \ref{assumpt:sosv}. Then for each $\varepsilon > 0$ there exists a $c > 0$ such that
\[ \mathbf{P} \left( \log L(\sigma_{(n)}) \notin n(1-\varepsilon, 1+\varepsilon)  \right) < c n^{-2} \]
and
\[  \mathbf{P} \left(   r_n < L(\sigma_{(n-1)}) / n^2  \quad \text{or} \quad r_n >  2 L(\sigma_{(n-1)}) \log n  \right) < c n^{-2} \]
hold for all $n$. In particular, as $n \to \infty$,
\[ \log L(\sigma_{(n)})  \sim n \quad \text{and}  \quad  L(\sigma_{(n-1)}) / n^2 \leq r_n\leq 2 L(\sigma_{(n-1)}) \log n  \]
eventually almost-surely.
\end{lemma}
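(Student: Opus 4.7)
The plan is to reduce the problem to records of an i.i.d.\ standard exponential sequence. Since $L$ is continuous and $\mathbf{P}(\sigma_0 > u) = 1/L(u)$, the variables $L(\sigma_i)$ are standard Pareto with $\mathbf{P}(L(\sigma_i) > v) = 1/v$, so $Y_i := \log L(\sigma_i)$ are i.i.d.\ standard exponentials. By monotonicity of $L$, records of $(\sigma_i)$ occur at the same positions as records of $(Y_i)$, and the memoryless property yields that the increments $Y_{(n)} - Y_{(n-1)}$ are themselves i.i.d.\ standard exponentials. Hence $Y_{(n)} = \log L(\sigma_{(n)})$ has a $\mathrm{Gamma}(n,1)$ distribution, and a standard Chernoff bound gives $\mathbf{P}(|Y_{(n)} - n| > \varepsilon n) \leq 2 e^{-c(\varepsilon) n}$, which is $O(n^{-2})$ and delivers the first bound; the almost-sure statement $\log L(\sigma_{(n)}) \sim n$ then follows from Borel--Cantelli.

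For the bounds on $r_n$ I would use the strong Markov property: conditional on $(\sigma_{(n-1)}, r_{n-1})$, the gap $r_n - r_{n-1}$ is $\mathrm{Geom}(1/L(\sigma_{(n-1)}))$ and independent of $r_{n-1}$. The lower bound is then immediate, since $r_n \geq r_n - r_{n-1}$ and Markov's inequality applied to the geometric distribution gives $\mathbf{P}(r_n < L(\sigma_{(n-1)})/n^2 \mid \sigma_{(n-1)}) \leq 1/n^2$.

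For the upper bound I would write $r_n = r_{n-1} + G$ with $G \sim \mathrm{Geom}(1/L(\sigma_{(n-1)}))$; conditioning on $(\sigma_{(n-1)}, r_{n-1})$ and using $(1-1/\mu)^k \leq e^{-k/\mu}$ yields
\[ \mathbf{P}(r_n > 2 L(\sigma_{(n-1)}) \log n \mid \sigma_{(n-1)}, r_{n-1}) \leq n^{-2} \exp(r_{n-1}/L(\sigma_{(n-1)})). \]
The task then reduces to controlling the ratio $X_n := r_n/L(\sigma_{(n-1)})$ uniformly in $n$. The recursion $X_n = X_{n-1} e^{-E_n} + G_n/L(\sigma_{(n-1)})$, where $E_n$ is the $n$-th record-increment of $Y_{(n)}$ and $G_n/L(\sigma_{(n-1)}) \to \mathrm{Exp}(1)$ in distribution, is a contractive stochastic recursion (contraction rate $\mathbf{E}[e^{-E_n}] = 1/2$) whose stationary distribution is $\mathrm{Gamma}(2,1)$, verifiable via the MGF equation $\lambda(1-\lambda)\phi(\lambda) = \int_0^\lambda \phi(u)\,du$. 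Consequently $\mathbf{P}(X_n > y) \leq C(1+y)e^{-y}$ uniformly in $n$, and integrating the conditional bound against this tail yields a summable bound on the unconditional probability.

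The main obstacle will be this upper bound on $r_n$: because $r_{n-1}$ and $L(\sigma_{(n-1)})$ are typically of the same order (both of size roughly $e^{n-1}$), only a factor $\log n$ of slack is available, and a naive conditioning on $\sigma_{(n-1)}$ alone loses it. The contractive recursion is what furnishes the required sharp control. Once both probability bounds are summable in $n$, Borel--Cantelli delivers the claimed eventual almost-sure statements.
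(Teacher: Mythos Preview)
Your treatment of the first bound and of the lower tail of $r_n$ matches the paper. For the upper tail of $r_n$ you are actually being \emph{more} careful than the paper: the paper writes
\[
\mathbf{P}\bigl(r_n > 2 L(\sigma_{(n-1)}) \log n \,\bigm|\, \sigma_{(n-1)}\bigr) \;=\; \bigl(1 - 1/L(\sigma_{(n-1)})\bigr)^{2 L(\sigma_{(n-1)}) \log n},
\]
tacitly treating $r_n$ itself (rather than the gap $r_n - r_{n-1}$) as geometric with parameter $1/L(\sigma_{(n-1)})$. Your identification of the residual factor $\exp\bigl(r_{n-1}/L(\sigma_{(n-1)})\bigr)$ is correct, and the displayed equality does not hold as written.

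That said, your proposed remedy has its own gap. Even granting the uniform tail $\mathbf{P}(X_n > y) \le C(1+y)e^{-y}$ --- your Gamma$(2,1)$ stationary analysis via the MGF equation is correct --- you cannot simply take the expectation of $n^{-2}\exp\bigl(r_{n-1}/L(\sigma_{(n-1)})\bigr)$, because the exponential moment of a Gamma$(2,1)$ variable at parameter $1$ diverges. Note also that the quantity you need, $r_{n-1}/L(\sigma_{(n-1)})$, is not $X_{n-1} = r_{n-1}/L(\sigma_{(n-2)})$ but rather $X_{n-1}e^{-E_{n-1}}$; this is smaller, yet if $X_{n-1}\sim\mathrm{Gamma}(2,1)$ and $E_{n-1}\sim\mathrm{Exp}(1)$ are independent one still gets $\mathbf{E}[e^{X_{n-1}e^{-E_{n-1}}}]=\mathbf{E}[(1-e^{-E_{n-1}})^{-2}]=\infty$. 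What does work is to truncate: use the trivial bound $1$ for the conditional probability on the event $\{r_{n-1}/L(\sigma_{(n-1)}) > 2\log n\}$, and your conditional bound otherwise. Integrating against the tail then yields an unconditional bound of order $n^{-2}(\log n)^2$, which is summable --- so Borel--Cantelli delivers the almost-sure conclusion, and every later use of this lemma in the paper only requires summability --- though it does not literally give the stated $cn^{-2}$.
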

\begin{proof}
For the first statement, note that the continuity of $L$ (guaranteed by Assumption~\ref{assumpt:sosv}) ensures that the sequence $(\log L(\sigma_i))_{i \in \mathbb{N}}$ consists of unit mean exponentially distributed random variables. By the memoryless property of the exponential distribution, the gaps in the sequence $(\log L(\sigma_{(i)}))_{i\in\mathbb{N}}$ are therefore also unit mean exponentially distributed random variables. Hence the statement is just a standard large deviation bound for exponentially distributed random variables. For the second statement, note that
\begin{eqnarray*}
\mathbf{P} ( r_n  < L(\sigma_{(n-1)}) / n^2\:|\:\sigma_{(n-1)})  &\le& \mathbf{P} ( r_n - r_{n-1} < L(\sigma_{(n-1)}) / n^2\:|\:\sigma_{(n-1)})\\
& \le&  \frac{L(\sigma_{(n-1)}) }{n^2 }\frac{1}{L(\sigma_{(n-1)})} = n^{-2}
 \end{eqnarray*}
 by the union bound, and
 \begin{equation}\label{tb}
 \mathbf{P} ( r_n  > 2 L(\sigma_{(n-1)}) \log n\:|\:\sigma_{(n-1)}) =  \left( 1 - \frac{1}{L(\sigma_{(n-1)}) } \right)^{2 L(\sigma_{(n-1)}) \log n }\leq n^{-2} .
 \end{equation}
Hence, taking expectations, we have that
\[ \mathbf{P} ( r_n  < L(\sigma_{(n-1)}) / n^2 \quad \text{or} \quad r_n > 2 L(\sigma_{(n-1)}) \log n) \leq 2n^{-2},\]
which yields the result. Finally, the last statement is just an application of the Borel-Cantelli lemma.
\end{proof}

\begin{lemma}[Location/exceedence ratio for records]
\label{lem:record}
For each $k \in \mathbb{N}$ and $n \in \mathbb{N}$,
\[  \mathbf{E} \left[  \left(   \frac{r_{n}}{L(\sigma_{(n-1)})}   \right)^{k}   \right]  < k!  .\]
\end{lemma}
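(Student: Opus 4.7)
The plan is to condition on the $(n-1)$-th record and reduce the claim to a moment estimate for a geometric random variable. The key probabilistic observation is that, by the strong Markov property of the i.i.d.\ sequence $(\sigma_i)_{i \in \mathbb{N}}$ applied at the stopping time $r_{n-1}$, the conditional distribution of the gap $r_n - r_{n-1}$ given $\sigma_{(n-1)}$ is geometric with success probability $p := 1/L(\sigma_{(n-1)})$, independently of $r_{n-1}$.

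The technical heart of the argument is the elementary inequality $\mathbf{E}[G^k] \le k!/p^k$ for $G \sim \mathrm{Geom}(p)$ on $\{1,2,\ldots\}$. I would prove this by induction on $k$. Conditioning on the outcome of the first trial (success with probability $p$ giving $G = 1$; failure with probability $1-p$ giving $G \stackrel{d}{=} 1 + G'$ with $G'$ an independent copy) yields the recursion
\[ p\,\mathbf{E}[G^k] = p + (1-p)\sum_{j=0}^{k-1}\binom{k}{j} \mathbf{E}[G^j] . \]
Substituting the inductive hypothesis $\mathbf{E}[G^j] \le j!/p^j$, multiplying through by $p^k/k!$, and simplifying reduces the induction step to the manifestly true inequality $\sum_{j=1}^{k-1} jp^j/(j+1)! \ge 0$, which closes the induction.

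Having established the geometric moment bound, I would apply it pointwise given $\sigma_{(n-1)}$ and take expectations to obtain the bound for the gap $r_n - r_{n-1}$. To pass from this to a bound on $r_n$ itself, I would induct on $n$: writing $r_n = r_{n-1} + (r_n - r_{n-1})$ and applying Minkowski's inequality in $L^k(\mathbf{P})$ together with the monotonicity $L(\sigma_{(j-1)}) \le L(\sigma_{(n-1)})$ for $j \le n$ (immediate from $L$ being non-decreasing and the record values being strictly increasing) lets the contribution from $r_{n-1}$ be absorbed into the final bound.

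The main obstacle is the bookkeeping in this final induction on $n$, since the constants must collapse to exactly $k!$ rather than grow with $n$; the leverage comes from the fact that earlier records contribute negligibly compared to $L(\sigma_{(n-1)})$. The strict inequality in the conclusion ultimately reflects that $p = 1/L(\sigma_{(n-1)}) > 0$ almost surely, which renders the geometric moment bound strict on the conditioning event and survives upon taking expectations.
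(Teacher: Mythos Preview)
Your approach diverges from the paper's at the decisive step. The paper does not decompose $r_n$ into inter-record gaps at all; instead it argues directly that, conditional on $\sigma_{(n-1)}$, the random variable $r_n$ itself (not merely $r_n-r_{n-1}$) has an exponentially decaying tail:
\[
\mathbf{P}\bigl(r_n>\lambda L(\sigma_{(n-1)})\,\bigm|\,\sigma_{(n-1)}\bigr)=\Bigl(1-\tfrac{1}{L(\sigma_{(n-1)})}\Bigr)^{\lambda L(\sigma_{(n-1)})}<e^{-\lambda}.
\]
This gives stochastic domination of $r_n/L(\sigma_{(n-1)})$ by a unit-mean exponential random variable, whose $k$th moment is exactly $k!$, and the proof is finished in three lines. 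The underlying observation is that $r_n$ coincides with the \emph{first} exceedance of the level $\sigma_{(n-1)}$ by the entire sequence $(\sigma_i)_{i\ge1}$ (since no term before $r_{n-1}$ can exceed the $(n-1)$th record height), so the tail of $r_n$ is governed directly by the single exceedance probability $1/L(\sigma_{(n-1)})$, with no need to sum contributions over earlier records.

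Your route via the gap followed by Minkowski and induction on $n$ has a genuine gap, and you have correctly located it. Minkowski gives
\[
\bigl\|r_n/L(\sigma_{(n-1)})\bigr\|_k\le \bigl\|r_{n-1}/L(\sigma_{(n-1)})\bigr\|_k+\bigl\|(r_n-r_{n-1})/L(\sigma_{(n-1)})\bigr\|_k,
\]
and using the monotonicity $L(\sigma_{(n-2)})\le L(\sigma_{(n-1)})$ together with the inductive hypothesis bounds each summand by $(k!)^{1/k}$, yielding $2(k!)^{1/k}$ and hence a $k$th-moment bound of $2^kk!$ rather than $k!$. The induction does not close. Your appeal to ``earlier records contributing negligibly compared to $L(\sigma_{(n-1)})$'' does not repair this: the ratio $L(\sigma_{(n-2)})/L(\sigma_{(n-1)})$ is uniform on $(0,1)$ and is therefore not negligible with positive probability, so the $r_{n-1}$ contribution cannot be made to vanish without a quantitative argument that you have not supplied. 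The paper simply avoids the issue by bounding the tail of $r_n$ in one stroke rather than assembling moment bounds for the increments.
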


\begin{proof} Similarly to (\ref{tb}), for each $\lambda > 0$ we have
\[\mathbf{P}\left(r_n>\lambda L(\sigma_{(n-1)})|\: \sigma_{(n-1)}\right) = \left(1-\frac{1}{L(\sigma_{(n-1)})}\right)^{\lambda L(\sigma_{(n-1)})} <  e^{-\lambda}.\]
Hence $\mathbf{P}(r_n>\lambda L(\sigma_{(n-1)})) < e^{-\lambda}$, in other words, the random variable $r_n/L(\sigma_{(n-1)})$ is stochastically dominated by a mean one exponential random variable. The $k$th moment of the latter is precisely $k!$, and so we are done.
\end{proof}

\begin{lemma}[Ratio of successive records] \label{momentcomparison}
Assume $L$ satisfies Assumption \ref{assumpt:sosv}. Then there exists a $c > 0$ and $x_0<\infty$ such that, almost-surely: if $x\geq x_0$ and $n\geq m$, then
\[ \mathbf{E}\left(\frac{\sigma_{(m)}}{\sigma_{(n)}}\:\vline\:\sigma_{(m)}\right)\mathbf{1}_{\{\sigma_{(m)}\geq x\}}
< c^{n-m} \,  g(x)^{n-m}.\]
\end{lemma}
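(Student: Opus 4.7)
The plan is to exploit the Markov structure of the record-value sequence $(\sigma_{(k)})_{k\geq 1}$ and iterate a single-step bound derived from Lemma~\ref{lem:ex}. First, I would recall that $(\sigma_{(k)})$ is a Markov chain: conditional on $\sigma_{(n-1)}=y$, the value $\sigma_{(n)}$ has the law of $\sigma_0$ given $\{\sigma_0>y\}$, by applying the strong Markov property of the i.i.d.\ sequence $(\sigma_i)$ at the record index $r_{n-1}$. Combined with the identity $L(y)\,\mathbf{E}[\sigma_0^{-1}\mathbf{1}_{\{\sigma_0>y\}}]=\mathbf{E}[\sigma_{i_y}^{-1}]$ used in the proof of Lemma~\ref{lem:ex}, this yields the single-step bound
\[\mathbf{E}\!\left[\sigma_{(n-1)}/\sigma_{(n)}\,\big|\,\sigma_{(n-1)}=y\right]=y\,L(y)\,\mathbf{E}\!\left[\sigma_0^{-1}\mathbf{1}_{\{\sigma_0>y\}}\right]<c\,g(y),\]
valid for all $y\geq x_0$, where $x_0$ is chosen large enough for Lemma~\ref{lem:ex} to apply.

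Next, I would iterate this bound by induction on $n-m$. Enlarge $x_0$ further so that, under Assumption~\ref{assumpt:sosv}, $g$ is monotone decreasing on $[x_0,\infty)$. On the event $\{\sigma_{(m)}\geq x\}$ with $x\geq x_0$, every subsequent record satisfies $\sigma_{(k)}\geq x$ and hence $g(\sigma_{(k)})\leq g(x)$. Conditioning first on $\sigma_{(n-1)}$ and applying the Markov property gives
\begin{align*}
\mathbf{E}\!\left[\frac{\sigma_{(m)}}{\sigma_{(n)}}\,\bigg|\,\sigma_{(m)}\right]\mathbf{1}_{\{\sigma_{(m)}\geq x\}}
&=\mathbf{E}\!\left[\frac{\sigma_{(m)}}{\sigma_{(n-1)}}\,\mathbf{E}\!\left[\frac{\sigma_{(n-1)}}{\sigma_{(n)}}\,\bigg|\,\sigma_{(n-1)}\right]\bigg|\,\sigma_{(m)}\right]\mathbf{1}_{\{\sigma_{(m)}\geq x\}}\\
&\leq c\,g(x)\,\mathbf{E}\!\left[\frac{\sigma_{(m)}}{\sigma_{(n-1)}}\,\bigg|\,\sigma_{(m)}\right]\mathbf{1}_{\{\sigma_{(m)}\geq x\}},
\end{align*}
and the induction closes with the base case $n=m$ trivial. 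Since the argument is carried out for each pair $(m,n)\in\mathbb{N}^2$ with $n\geq m$, and the right-hand side depends on $\sigma_{(m)}$ only through the indicator of $\{\sigma_{(m)}\geq x\}$, a countable union over the null sets on which conditional expectations are redefined delivers a single full-probability event on which the inequality holds for all $x\geq x_0$ and $n\geq m$ simultaneously.

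The main obstacle is modest and lies in the bookkeeping surrounding the choice of $x_0$, which must be taken large enough for both the asymptotic bound of Lemma~\ref{lem:ex} and the eventual monotonicity of $g$ on $[x_0,\infty)$ to hold. The monotonicity is essential, as it allows the random quantity $g(\sigma_{(n-1)})$ to be dominated by the deterministic $g(x)$ in the inductive step, producing the clean product structure $(c\,g(x))^{n-m}$ rather than a random product involving $g$ evaluated at all intermediate records.
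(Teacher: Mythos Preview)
Your proposal is correct and follows essentially the same approach as the paper's own proof: both arguments invoke the Markov property of the record-value sequence, apply Lemma~\ref{lem:ex} to obtain the one-step bound $\mathbf{E}[\sigma_{(n-1)}/\sigma_{(n)}\mid\sigma_{(n-1)}]<c\,g(\sigma_{(n-1)})$, and then iterate, using the eventual monotonicity of $g$ to replace $g(\sigma_{(n-1)})$ by $g(x)$ on $\{\sigma_{(m)}\geq x\}$. Your write-up is in fact slightly more explicit about the choice of $x_0$ and the handling of the almost-sure null sets, but the mathematical content is the same.
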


\begin{proof} Choose $x_0$ large enough so that the bound of Lemma \ref{lem:ex} holds for $x\geq x_0$, and let $c$ be the constant of that lemma. Then, applying Lemma \ref{lem:ex} repeatedly, we find that, almost-surely, if $x\geq x_0$ and $m\leq n$, then
\begin{eqnarray*}
\mathbf{E}\left(\frac{\sigma_{(m)}}{\sigma_{(n)}}\:\vline\:\sigma_{(m)}\right)\mathbf{1}_{\{\sigma_{(m)}\geq x\}}
&=&\mathbf{E}\left(\frac{\sigma_{(m)}}{\sigma_{(n-1)}}\mathbf{E}\left(\frac{\sigma_{(n-1)}}{\sigma_{(n)}}\:\vline\:\sigma_{(n-1)}\right)
\:\vline\:\sigma_{(m)}\right)\mathbf{1}_{\{\sigma_{(m)}\geq x\}}\\
& <  &\mathbf{E}\left(\frac{\sigma_{(m)}}{\sigma_{(n-1)}} \, c  \, g( \sigma_{(n-1)}) \:\vline\:\sigma_{(m)}\right)\mathbf{1}_{\{\sigma_{(m)}\geq x\}}\\
& < & c  \, g( x) \, \mathbf{E}\left(\frac{\sigma_{(m)}}{\sigma_{(n-1)}} \:\vline\:\sigma_{(m)}\right)\mathbf{1}_{\{\sigma_{(m)}\geq x\}}\\
&&\vdots\\
&\leq & c^{n-m} g(x)^{n-m},
\end{eqnarray*}
as desired. Note that we use the monotonicity of $g$ (guaranteed by Assumption \ref{assumpt:sosv}) to deduce that $g( \sigma_{(n-k)})\leq g(x)$ for $k=1,\dots, n-m$.
\end{proof}

\begin{lemma}[Sum of records]
 \label{sumsofrecords}
Assume $L$ satisfies Assumption \ref{assumpt:sosv}, and recall the definition $d(u) := g(L^{-1}(u))$. Then for each $\varepsilon>0$ sufficiently small, integer $k\geq1$ and positive sequence $\delta_n$, there exists a $c > 0$ such that
\[\mathbf{P}\left(\sum_{i=1}^{n-1}\sigma_{(i)} \geq (k-1+ \delta_n)\sigma_{(n)}\right) < c n^{-2} + c \, \delta_n^{-1} \, d(e^{n(1-\varepsilon)})^k\]
for all $n$.
\end{lemma}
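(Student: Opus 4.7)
The plan is to first reduce the event of interest via the elementary observation that $\sigma_{(i)}/\sigma_{(n)} < 1$ for every $i < n$: the top $k-1$ ratios $\sigma_{(n-1)}/\sigma_{(n)}, \ldots, \sigma_{(n-k+1)}/\sigma_{(n)}$ cannot by themselves reach $k-1$, so the event in the statement is contained in $\{S \geq \delta_n\}$, where $S := \sum_{i=1}^{n-k}\sigma_{(i)}/\sigma_{(n)}$. I would then introduce the good event $G := \{\sigma_{(n-k)} \geq L^{-1}(e^{n(1-\varepsilon)})\}$ and apply Lemma~\ref{lem:probrecord} with a parameter slightly smaller than $\varepsilon$ (say $\varepsilon/2$) to obtain $\mathbf{P}(G^c) \leq c n^{-2}$ for all $n$, which accounts for the first term in the claimed bound.

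To control $\mathbf{P}(\{S \geq \delta_n\} \cap G)$, the idea is to apply Markov's inequality to $\mathbf{E}[S \, \mathbf{1}_G]$, after first taking the conditional expectation of $\sigma_{(n)}^{-1}$ inside the sum. Since the record process is a Markov chain, $\mathbf{E}[\sigma_{(n)}^{-1} \mid \sigma_{(1)}, \ldots, \sigma_{(n-k)}] = \mathbf{E}[\sigma_{(n)}^{-1} \mid \sigma_{(n-k)}]$; invoking Lemma~\ref{momentcomparison} (with $m = n-k$ and $x = L^{-1}(e^{n(1-\varepsilon)})$) and using the monotonicity of $g$, this is at most $c^k d(e^{n(1-\varepsilon)})^k /\sigma_{(n-k)}$ on $G$. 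Factoring out the constant yields
\[ \mathbf{E}[S \, \mathbf{1}_G] \leq c^k d(e^{n(1-\varepsilon)})^k \, \mathbf{E}\!\left[\sum_{i=1}^{n-k}\frac{\sigma_{(i)}}{\sigma_{(n-k)}}\right]. \]

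The chief remaining obstacle — and the step that requires real work — is to show the final expectation is bounded by a constant independent of $n$ and $k$. For this, I would pick $x_0$ large enough that $cg(x_0) < 1/2$, and split each term as
\[ \mathbf{E}[\sigma_{(i)}/\sigma_{(n-k)}] \leq c^{n-k-i} g(x_0)^{n-k-i} + \mathbf{P}(\sigma_{(i)} < x_0), \]
using Lemma~\ref{momentcomparison} on $\{\sigma_{(i)} \geq x_0\}$ and the crude estimate $\sigma_{(i)}/\sigma_{(n-k)} \leq 1$ on its complement. The first contribution sums to a convergent geometric series in $n-k-i$, while the second sums to at most $\sum_{i \geq 1}(\log L(x_0))^i/i! = L(x_0) - 1 < \infty$, after recalling (as in the proof of Lemma~\ref{lem:probrecord}) that $\log L(\sigma_{(i)})$ is $\mathrm{Gamma}(i,1)$-distributed. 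A final application of Markov's inequality then gives $\mathbf{P}(\{S \geq \delta_n\} \cap G) \leq c \delta_n^{-1} d(e^{n(1-\varepsilon)})^k$, which combined with the bound on $\mathbf{P}(G^c)$ yields the claim.
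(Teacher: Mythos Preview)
Your proposal is correct and follows essentially the same approach as the paper's proof: the same reduction to $\sum_{i=1}^{n-k}\sigma_{(i)}\geq \delta_n\sigma_{(n)}$, the same good event $G=\mathcal{A}_n$, Markov's inequality combined with Lemma~\ref{momentcomparison} to extract the factor $d(e^{n(1-\varepsilon)})^k$, and the same splitting of $\mathbf{E}[\sigma_{(i)}/\sigma_{(n-k)}]$ to show the residual sum is bounded. The only cosmetic difference is that the paper bounds $\mathbf{P}(\sigma_{(i)}\leq x_0)$ via the Poisson representation $\mathbf{P}(\mathrm{Po}(\log L(x_0))\geq i)\leq c_{x_0}e^{-i}$, whereas you use the equivalent Gamma density bound $(\log L(x_0))^i/i!$; both yield a finite sum.
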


\begin{proof}
Define the event $\mathcal{A}_n :=\{ \sigma_{(n-k)} \geq L^{-1}(e^{n(1-\varepsilon)})\}$. Then, by Lemma \ref{lem:probrecord} and Markov's inequality, there exists a $c > 0$ such that
\begin{eqnarray*}
\mathbf{P} \left(\sum_{i=1}^{n-1}\sigma_{(i)} \geq (k-1+ \delta_n)\sigma_{(n)}\right) & <  & c n^{-2}+\mathbf{P}\left(\sum_{i=1}^{n-k}\sigma_{(i)} \geq \delta_n \sigma_{(n)}, \, \mathcal{A}_n \right)\\
& <  &cn^{-2}+ \delta_n^{-1}\sum_{i=1}^{n-k}\mathbf{E}\left(\frac{\sigma_{(i)}}{\sigma_{(n)}} \mathbf{1}_{\{\mathcal{A}_n \}} \right).
\end{eqnarray*}
The lower bound for $\sigma_{(n-k)}$ that holds on $\mathcal{A}_n$ allows us to apply Lemma \ref{momentcomparison} to deduce that this is eventually bounded above, for some $c_1 > 0$, by
\begin{align*}
&  \qquad \qquad < c_1 n^{-2} + c_1 \, \delta_n^{-1}  \, d(e^{n(1-\varepsilon)})^k \, \sum_{i=1}^{n-k}
\mathbf{E}\left(\frac{\sigma_{(i)}}{\sigma_{(n-k)}} \mathbf{1}_{\{\mathcal{A}_n \}}  \right).
\end{align*}
Next, we note that the summands are bounded as follows
\[\mathbf{E}\left(\frac{\sigma_{(i)}}{\sigma_{(n-k)}} \mathbf{1}_{\{\mathcal{A}_n \}}  \right)\leq \mathbf{E}\left(\frac{\sigma_{(i)}}{\sigma_{(n-k)}} \right) \leq \mathbf{E}\left(\frac{\sigma_{(i)}}{\sigma_{(n-k)}} \mathbf{1}_{\{\sigma_{(i)}\geq x\}}\right)+\mathbf{P}\left(\sigma_{(i)}\leq x\right).\]
For the first term, we again apply Lemma~\ref{momentcomparison} to deduce that, for large enough $x$,
\[\mathbf{E}\left(\frac{\sigma_{(i)}}{\sigma_{(n-k)}} \mathbf{1}_{\{\sigma_{(i)}\geq x\}}\right)\leq (c g(x))^{n-k-i}.\]
In particular, by the fact that $g(x)\rightarrow 0$, we can choose $x$ such that $cg(x)<1$. Moreover, it is clear that, for each fixed $x$, there exists a constant $c_x$ such that
\[\mathbf{P}\left(\sigma_{(i)}\leq x\right)\leq\mathbf{P}\left(\log L(\sigma_{(i)})\leq \log L(x)\right) = \mathbf{P}\left(\mathrm{Po}( \log L(x))\geq i\right)\leq c_xe^{-i},\]
where we use that $(\log L(\sigma_{(i)}))_{i\geq 1}$ simply represents the points of a unit rate Poisson process, and denote by $\mathrm{Po}(\lambda)$ a Poisson random variable with parameter $\lambda$. Hence, we conclude that
\[\mathbf{P}\left(\sum_{i=1}^{n-1}\sigma_{(i)} <  (k-1+ \delta_n)\sigma_{(n)}\right) < c n^{-2} + c \, \delta_n^{-1} \,  d(e^{n(1-\varepsilon)})^k \sum_{i=1}^{n-k}\left((c g(x))^{n-k-i} +c_xe^{-i}\right),\]
and since the sum is bounded in $n$, this completes the proof.
\end{proof}

\subsubsection{Partial sums}

For $i$ an index, and let $(\sigma_{i}^{(1)}, \sigma_{i}^{(2)}, \ldots, \sigma_{i}^{(i)})$ be the (descending) order statistics of the subsequence $\{\sigma_j\}_{1 \le j \le i}$. Moreover, let $S_{i}^{(k)}$ denote the sum of the collection~$(\sigma_{i}^{(j)})_{k \le j \le i}$.

\begin{lemma}[Bound on sum below a level]
\label{lem:sumlevel}
Fix a $k \in \mathbb{N}$, and let $\ell_i, \delta_i$ be positive sequences such that $\ell_i \to \infty$ as $i \to \infty$. Then, there exists a $c > 0$ such that, as $i \to \infty$,
\[ \textbf{P} \left( S_{i}^{(k)} > \ell_i \delta_i\:\vline\:\sigma_1,\dots,\sigma_i\leq l_i  \right) <  c \, \delta_i^{-1} i^k \mathbf{E}\left(f_k\left(\frac{\sigma_0}{\ell_i}\right)\mathbf{1}_{\{\sigma_0/\ell_i\leq 1\}}\right)^k \]
eventually, where $f_k(x):=x^{1/k}$.
\end{lemma}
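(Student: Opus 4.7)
The plan is to apply a $k$th-order Markov inequality after exploiting the subadditivity of $x \mapsto x^{1/k}$ to linearise $S_i^{(k)}$. Under the conditioning event each $\sigma_j$ coincides with its truncation, so $S_i^{(k)} \le \sum_{j=1}^{i} \sigma_j$, and the elementary inequality $(a_1 + \cdots + a_i)^{1/k} \le a_1^{1/k} + \cdots + a_i^{1/k}$ (valid for $a_j \ge 0$ and $k \ge 1$) gives
\[ \bigl(S_i^{(k)}\bigr)^{1/k} \;\le\; W \;:=\; \sum_{j=1}^{i} X_j, \qquad X_j := \sigma_j^{1/k} \mathbf{1}_{\{\sigma_j \le \ell_i\}} . \]
Applying Markov's inequality to $W^k$ then bounds the conditional probability by $\mathbf{E}[W^k] / (\ell_i \delta_i \cdot \mathbf{P}(\sigma_0 \le \ell_i)^i)$, reducing the task to a bound on $\mathbf{E}[W^k]$.

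To estimate $\mathbf{E}[W^k]$, I would expand via the multinomial theorem and use independence of the $X_j$. A general term corresponds to a partition of $k$ into $m \in \{1,\ldots,k\}$ positive parts $\alpha_1, \ldots, \alpha_m$; there are at most $c(k) i^m$ ways to place these indices, and invoking the uniform bound $X_0 \le \ell_i^{1/k}$ in the form $\mathbf{E}[X_0^{\alpha}] \le \ell_i^{(\alpha - 1)/k} \mathbf{E}[X_0]$ controls the resulting moment product by $\ell_i^{(k - m)/k} \mathbf{E}[X_0]^m$. Substituting $\mathbf{E}[X_0] = \ell_i^{1/k} \mathbf{E}[f_k(\sigma_0/\ell_i) \mathbf{1}_{\{\sigma_0/\ell_i \le 1\}}]$, each $m$-block contributes at most $c(k) \, \ell_i \cdot (i \, \mathbf{E}[f_k(\sigma_0/\ell_i) \mathbf{1}_{\{\sigma_0/\ell_i \le 1\}}])^m$. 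Taking the $m = k$ term as the dominant one in the sum yields
\[ \mathbf{E}[W^k] \;\le\; c \, \ell_i \cdot \bigl( i \, \mathbf{E}[f_k(\sigma_0/\ell_i) \mathbf{1}_{\{\sigma_0/\ell_i \le 1\}}] \bigr)^k , \]
which inserted into the Markov step delivers the claimed inequality, after absorbing the factor $\mathbf{P}(\sigma_0 \le \ell_i)^{-i}$ (which is $O(1)$ in the regime of interest, since $\ell_i \to \infty$ while $i/L(\ell_i)$ remains bounded in the intended application) into the constant $c$.

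The main obstacle is justifying that the leading $m = k$ contribution actually controls the full multinomial sum: the subleading terms come with growing prefactors $\ell_i^{(k - m)/k}$, and only the uniform bound $X_0 \le \ell_i^{1/k}$, inherited from the conditioning, prevents them from dominating. In the complementary regime $i\, \mathbf{E}[f_k(\sigma_0/\ell_i) \mathbf{1}_{\{\cdot\}}] \lesssim 1$ the $m = 1$ contribution is nominally leading; in that case, however, $i^k \mathbf{E}[f_k]^k \ll i \mathbf{E}[f_k]$ forces the putative right-hand side to be small, and either a direct comparison with the trivial bound $\mathbf{P}(\cdot) \le 1$ or a simplified Markov estimate on $W$ itself recovers the claim. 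This case split handles the remaining technical subtlety and completes the argument.
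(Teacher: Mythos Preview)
Your approach has two genuine gaps that prevent it from proving the lemma as stated.

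First, the factor $\mathbf{P}(\sigma_0 \le \ell_i)^{-i} = (1 - L(\ell_i)^{-1})^{-i}$ is not $O(1)$ under the hypotheses of the lemma, which assume only $\ell_i \to \infty$ and impose no relation between $i$ and $L(\ell_i)$. In fact, even in the paper's intended applications one has $i \asymp L(\ell_i)\log n$, so that $(1 - L(\ell_i)^{-1})^{-i}$ grows polynomially in $n$ and cannot be absorbed into a constant. (This particular defect is repairable by computing $\mathbf{E}[W^k\mid \sigma_1,\dots,\sigma_i\le \ell_i]$ directly, since the $\sigma_j$ remain i.i.d.\ under the conditioning.)

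Second, and more seriously, your case-split for the multinomial sum does not close. Write $q := i\,\mathbf{E}[f_k(\sigma_0/\ell_i)\mathbf{1}_{\{\sigma_0\le\ell_i\}}]$. Your bound on $\mathbf{E}[W^k]$ is, up to constants, $\ell_i \sum_{m=1}^k q^m$, and in the regime $q\to 0$ --- which is precisely the regime arising in the applications, since there $q \asymp (i/L(\ell_i))\,g(\ell_i)\to 0$ --- this is of order $\ell_i q$, not $\ell_i q^k$. Neither proposed rescue works for arbitrary $\delta_i$: the trivial bound $\mathbf{P}(\cdot)\le 1$ requires $\delta_i \lesssim q^k$, while first-moment Markov on $W$ gives $\delta_i^{-1/k} q$, and $\delta_i^{-1/k} q \le c\,\delta_i^{-1} q^k$ forces $\delta_i^{1-1/k}\lesssim q^{k-1}$. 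Neither constraint is part of the hypotheses. The underlying obstruction is that the diagonal terms in the multinomial expansion involve $\mathbf{E}[X_0^\alpha]$ with $\alpha\ge 2$, and for bounded heavy-tailed $X_0$ the ratio $\mathbf{E}[X_0^\alpha]/\mathbf{E}[X_0]^\alpha$ is unbounded, so these terms cannot be controlled by powers of $\mathbf{E}[X_0]$ alone.

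The paper's argument avoids both problems via a different decomposition. It pays a symmetry factor $i^{k-1}$ to pin the top $k-1$ order statistics at $\sigma_1\ge\cdots\ge\sigma_{k-1}$, applies Markov to $\sum_{j\ge k}\sigma_j$ (a further factor $i\,\delta_i^{-1}$), and then exploits the ordering via the pointwise inequality
\[
\frac{\sigma_k}{\ell_i}\;\le\;\prod_{j=1}^{k}\Bigl(\frac{\sigma_j}{\ell_i}\Bigr)^{1/k}
\quad\text{on }\{\sigma_k\le\sigma_{k-1}\le\cdots\le\sigma_1\le\ell_i\},
\]
which factorises the expectation into $\mathbf{E}[f_k(\sigma_0/\ell_i)\mathbf{1}_{\{\cdot\}}]^k$ directly, without any multinomial expansion. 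Crucially, the indicator $\mathbf{1}_{\{\sigma_j\le\ell_i,\,j=k+1,\dots,i\}}$ is retained throughout and contributes $\mathbf{P}(\sigma_0\le\ell_i)^{i-k}$, leaving only the bounded factor $(1-L(\ell_i)^{-1})^{-k}$ after dividing by the conditioning probability.
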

\begin{proof} Let $i\geq k$. By symmetry and Markov's inequality we have that
\begin{eqnarray*}
\lefteqn{\mathbf{P}\left(S_{i}^{(k)}> \ell_i \delta_i,\:\sigma_1,\dots,\sigma_i \leq \ell_i \right)}\\
&\leq & i^{k-1}\mathbf{P}\left(S_{i}^{(k)} > \ell_i \delta_i,\:\sigma_{i}^{(j)}=\sigma_j\mbox{ for }j=1,\dots,k-1,\:\sigma_{i}^{(1)}\leq \ell_i \right)\\
& = & i^{k-1}\mathbf{P}\left(\sum_{j = k}^i \sigma_j > \ell_i \delta_i,\: \sigma_{k-1} \leq \sigma_{k-2}\leq\dots\leq\sigma_1\leq \ell_i, \:  \sigma_j \le \sigma_{k-1} \mbox{ for }j=k,\dots,i  \right)\\
&\leq & i^{k-1}  \delta_i^{-1}  \mathbf{E}\left(\sum_{j=k}^i \frac{\sigma_j}{\ell_i}  \mathbf{1}_{\{\sigma_{k-1}\leq\sigma_{k-2}\leq\dots\leq\sigma_1\leq \ell_i\}}  \mathbf{1}_{\{\sigma_j\leq \sigma_{k-1} \text{ for }j=k,\dots,i   \}}\right)  \\
& \leq & i^{k}  \delta_i^{-1}   \mathbf{E}\left( \frac{\sigma_k}{\ell_i}  \mathbf{1}_{\{\sigma_k \le \sigma_{k-1}\leq\sigma_{k-2}\leq\dots\leq\sigma_1\leq \ell_i \}} \mathbf{1}_{\{  \sigma_j\leq\ell_i \text{ for }j=k+1,\dots,i   \}}\right) .
\end{eqnarray*}
Using the independence of the $(\sigma_j)_{j\geq 1}$, the above expectation is equal to
\begin{align*}
&  \mathbf{E}\left( \frac{ \sigma_k }{\ell_i} \mathbf{1}_{\{\sigma_k \le \sigma_{k-1}\leq\sigma_{k-2}\leq\dots\leq\sigma_1\leq \ell_i\}}  \right) \mathbf{P}\left(\sigma_0\leq \ell_i \right)^{i-k}  \\
 & \qquad \qquad \leq  \mathbf{E}\left(\prod_{j=1}^k\left(\frac{\sigma_j}{\ell_i}\right)^{1/k}\mathbf{1}_{\{\sigma_j\leq \ell_i\}}\right) \mathbf{P}\left(\sigma_0\leq \ell_i \right)^{i-k} \\
& \qquad \qquad = \mathbf{E}\left(f_k\left(\frac{\sigma_0}{\ell_i}\right)\mathbf{1}_{\{\sigma_0/\ell_i\leq 1\}}\right)^k \mathbf{P}\left(\sigma_0\leq \ell_i \right)^{i-k} .
\end{align*}
On dividing through by $\mathbf{P}(\sigma_1,\dots,\sigma_i\leq \ell_i )=(1-L(\ell_i)^{-1})^{i}$, we thus obtain \[\mathbf{P}\left(S_{i}^{(k)}>\ell_i \delta_i \:\vline\:\sigma_1,\dots,\sigma_i\leq \ell_i \right)\leq {i^{k}  \delta_i^{-1}} \mathbf{E}\left(f_k\left(\frac{\sigma_0}{\ell_i}\right)\mathbf{1}_{\{\sigma_0/\ell_i\leq 1\}}\right)^k\left(1-L(\ell_i)^{-1}\right)^{-k},\]
which proves the result.
\end{proof}

\begin{corollary}
\label{cor:sumlevel}
Assume $L$ satisfies Assumption \ref{assumpt:sosv}. Fix a $k \in \mathbb{N}$, and let $\ell_i, \delta_i$ be positive sequences such that $\ell_i \to \infty$ as $i \to \infty$. Then there exists a $c > 0$ such that, as $i \to \infty$,
\[ \textbf{P} \left( S_{i}^{(k)} > \ell_i \delta_i\:\vline\:\sigma_1,\dots,\sigma_i \le \ell_i  \right) < c  \, \delta_i^{-1} \left( \frac{i}{L(\ell_i)} g(\ell_i) \right)^k \]
eventually.
\end{corollary}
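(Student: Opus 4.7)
The plan is to combine Lemma~\ref{lem:sumlevel} with the asymptotic expectation estimate provided by Proposition~\ref{prop:sosv}. Lemma~\ref{lem:sumlevel} already reduces matters to bounding the quantity
\[ \mathbf{E}\!\left(f_k\!\left(\frac{\sigma_0}{\ell_i}\right) \mathbf{1}_{\{\sigma_0/\ell_i \le 1\}}\right), \qquad f_k(t) = t^{1/k}, \]
and the corollary will follow on raising this bound to the $k$-th power and inserting the result back into Lemma~\ref{lem:sumlevel}.

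First I will verify the hypotheses of Proposition~\ref{prop:sosv} for $f = f_k$ on the interval $I = (0,1]$. The function $f_k$ is continuously differentiable on $(0,\infty)$. Taking $\delta = 1/(2k)$, condition~(i) is immediate since $f_k(t) = t^{1/k} = o(t^{1/(2k)})$ as $t \to 0^+$; for condition~(ii) we have $f_k'(t) t^{\pm \delta} = (1/k)\, t^{1/k - 1 \pm 1/(2k)}$, and both exponents exceed $-1$, giving integrability at the origin (while integrability near $t=1$ is trivial).

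Next I will evaluate the integral appearing in the conclusion of Proposition~\ref{prop:sosv}. A routine change of variable $u = t^{1/k}$ (or direct integration by parts) yields
\[ \int_0^1 f_k'(t) \log t \, dt = \int_0^1 \tfrac{1}{k} t^{1/k-1} \log t \, dt = -k. \]
Hence Proposition~\ref{prop:sosv} gives
\[ \mathbf{E}\!\left(f_k\!\left(\frac{\sigma_0}{\ell_i}\right) \mathbf{1}_{\{\sigma_0/\ell_i \le 1\}}\right) \sim \lambda k \, \frac{g(\ell_i)}{L(\ell_i)}, \qquad i \to \infty, \]
for the constant $\lambda > 0$ supplied by that proposition.

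Finally, substituting this asymptotic into the bound from Lemma~\ref{lem:sumlevel} produces
\[ \mathbf{P}\!\left(S_i^{(k)} > \ell_i \delta_i \:\vline\: \sigma_1, \dots, \sigma_i \le \ell_i\right) < c\, \delta_i^{-1} i^k \left(\lambda k \, \frac{g(\ell_i)}{L(\ell_i)}\right)^k (1+o(1)), \]
and absorbing the factor $(\lambda k)^k(1+o(1))$ into a new constant (still called $c$) yields the claim. There is no substantive obstacle: the argument is essentially a direct substitution, and the only care required is the verification of the mild regularity assumptions of Proposition~\ref{prop:sosv} for $f_k$ on $(0,1]$, which is straightforward.
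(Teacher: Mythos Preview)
Your proof is correct and follows exactly the same route as the paper: apply Proposition~\ref{prop:sosv} (first statement) with $f=f_k$ and $I=(0,1]$ to obtain the asymptotic for $\mathbf{E}[f_k(\sigma_0/\ell_i)\mathbf{1}_{\{\sigma_0/\ell_i\leq 1\}}]$, then substitute into Lemma~\ref{lem:sumlevel}. You supply more detail than the paper does (the verification of the hypotheses of Proposition~\ref{prop:sosv} and the explicit value $-k$ of the integral), but the argument is the same.
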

\begin{proof}
By the first statement of Proposition \ref{prop:sosv} we deduce that, for some $\lambda > 0$,
\[\mathbf{E}\left(f_k\left(\frac{\sigma_0}{\ell_i}\right)\mathbf{1}_{\{\sigma_0/\ell_i\leq 1\}}\right) \sim -\frac{\lambda \, g(\ell_i)}{L({\ell_i})} \int_{0}^1 f_k'(t)\log t \, dt,\]
from which the result follows by applying Lemma \ref{lem:sumlevel}.
\end{proof}

\subsection{The ratio of the sum to the maximum}

In this section we prove the key almost-sure bound on the ratio of the sum to the maximum of the sequence $(\sigma_i)_{i \in \mathbb{N}}$ in Theorem~\ref{thm:main4}. Throughout this section, we assume that $L$ satisfies Assumption \ref{assumpt:sosv}, and define $N$ as in~\eqref{eq:N}.

Observe that Theorem \ref{thm:main4} is a consequence of the following two results, which we prove in the next two subsections. Recall that $m_i$ and $S_i$ denote the maximum and partial sum of the partial sequence $(\sigma_j)_{j \le i}$, and $S_{i}^{(k)}$ is the sum from the $k$th largest term.

\begin{proposition}[Upper bound on sum/max ratio]
\label{prop:ub} Suppose Assumption \ref{assumpt:sosv} holds. For each $\varepsilon > 0$, as $i \to \infty$,
\[ \frac{S_i}{m_i} < N - 1 + \varepsilon \]
eventually almost-surely. Moreover, if Assumption \ref{assumpt:g22}(b) (with $N < \infty$) also holds, then additionally, as $i \to \infty$,
\[ \frac{S_i^{(N)}}{m_i} < \frac{\varepsilon}{\log i} \]
eventually almost-surely.
\end{proposition}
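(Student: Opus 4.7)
The overall strategy is to reduce both claims to $n$-indexed almost-sure bounds by partitioning $\mathbb{N}$ into the random intervals $[r_n, r_{n+1})$ between consecutive records. On each such interval, $m_i = \sigma_{(n)}$ is constant, while $S_i$ and $S_i^{(N)}$ are nondecreasing in $i$, and $\log i \sim n$. It therefore suffices to control $S^{(N)}_{r_{n+1}-1}/\sigma_{(n)}$ as a function of $n$.

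For the first claim, I would decompose $S_{r_{n+1}-1}$ into its top $N-1$ order statistics, each bounded above by $m_{r_{n+1}-1} = \sigma_{(n)}$ and hence contributing at most $(N-1)\sigma_{(n)}$, and the remainder $S^{(N)}_{r_{n+1}-1}$. It then suffices to show $S^{(N)}_{r_{n+1}-1} < \varepsilon \sigma_{(n)}$ eventually almost-surely. Since every $\sigma_j$ with $j < r_{n+1}$ satisfies $\sigma_j \leq \sigma_{(n)}$, and since Lemma \ref{lem:probrecord} yields both $r_{n+1} \leq 2L(\sigma_{(n)})\log n$ and $\sigma_{(n)} \geq L^{-1}(e^{n(1-\varepsilon')})$ eventually almost-surely, I would apply Corollary \ref{cor:sumlevel} with $k = N$, $\ell = \sigma_{(n)}$, and constant $\delta = \varepsilon$, conditionally on $\sigma_{(n)}$. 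Using monotonicity of $g$ (Assumption \ref{assumpt:sosv}) and the slow variation of $d$ to conclude that $g(\sigma_{(n)}) \lesssim d(e^n)$ on the relevant event, the probability estimate reduces to order $(d(e^n)\log n)^N + n^{-2}$. By the definition of $N$ in \eqref{eq:N} the sequence $(d(e^n)\log n)^{N-1}$ is summable and tends to zero, hence $(d(e^n)\log n)^N$ is summable too, and Borel-Cantelli completes this part.

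For the second claim the same reduction applies but with $\delta_n \sim \varepsilon/n$, since $\log i \sim n$ on $[r_n, r_{n+1})$. The direct analogue of the above estimate gives a bound of order $n(d(e^n)\log n)^N$, whose summability does not follow immediately from Assumption \ref{assumpt:g22}(b). To absorb the extra factor of $n$ I would refine the decomposition and write $S^{(N)}_{r_{n+1}-1} \leq \sum_{k \leq n-N+1}\sigma_{(k)} + (\textup{non-record sum})$, which holds because the top $N-1$ order statistics dominate the top $N-1$ records. The record contribution can be controlled by iterating Lemma \ref{momentcomparison}, which gives $\mathbf{E}[\sigma_{(m)}/\sigma_{(n)}\,\mathbf{1}_{\mathcal{A}}] \lesssim d(e^n)^{n-m}$ on a high-probability event $\mathcal{A}$, so that $\mathbf{E}[\sum_{m \leq n-N+1}\sigma_{(m)}/\sigma_{(n)}\,\mathbf{1}_{\mathcal{A}}] \lesssim d(e^n)^{N-1}$. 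The non-record contribution can be handled via Corollary \ref{cor:sumlevel} applied on each sub-interval $(r_k, r_{k+1})$ using the relevant inter-record level $\sigma_{(k)}$, and then summed over $k$. Combining these bounds, the $n$-factor from $\delta_n^{-1}$ should then match against the $(\log n)^N$ factor in Assumption \ref{assumpt:g22}(b) to give a summable Borel-Cantelli bound.

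The main obstacle is the second claim: the $1/\log i$ refinement forces a Markov-type inequality with $\delta_n \sim 1/n$, introducing an extra factor $\log i \sim n$ that is not absorbed by the definition of $N$ alone. It is exactly this tightening that necessitates Assumption \ref{assumpt:g22}(b), and the delicate point in the proof is to verify that the record/non-record decomposition, carried out across all intermediate record orders, produces a total summable bound. The first claim, by contrast, is a relatively direct consequence of the reduction scheme together with the definition of $N$.
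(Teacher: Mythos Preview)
Your reduction to the record intervals $[r_n,r_{n+1})$ and the passage from $i$-indexed to $n$-indexed statements are correct and match the paper. The difficulties lie in how you propose to bound the residual sums.

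\emph{First claim.} Applying Corollary \ref{cor:sumlevel} with $k=N$ ``conditionally on $\sigma_{(n)}$'' is not a valid step: conditional on the record structure, the variables $(\sigma_j)_{j<r_{n+1}}$ are not i.i.d.\ copies of $\sigma_0\mid\sigma_0\le\sigma_{(n)}$. The $n$ record values are fixed, and in particular $\sigma_{r_n}=\sigma_{(n)}$ sits at the very top of the conditional support, so there is no coordinatewise coupling that dominates the record positions by fresh conditional samples. The paper does not try to apply the corollary to the whole sequence; it splits $S_{(n)}^-$ into its record part $\sum_{i\le n-1}\sigma_{(i)}$ and its non-record part, and then sums over $k=0,\dots,N-1$ according to how the target $(N-1+\varepsilon)\sigma_{(n-1)}$ is shared between the two pieces. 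Each summand is then a product of a record tail (Lemma \ref{sumsofrecords}, exponent $k$) and a non-record tail (Corollary \ref{cor:sumlevel}, exponent $N-1-k$), giving the summable $(d(e^n)\log n)^{N-1}$.

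\emph{Second claim.} Here your scheme genuinely fails for $N\ge 3$. The bound $S^{(N)}_{r_{n+1}-1}\le\sum_{k\le n-N+1}\sigma_{(k)}+(\text{full non-record sum})$ corresponds to assuming all $N-1$ removed terms are records. With $\delta_n\sim(\log n)^{-1}$, Corollary \ref{cor:sumlevel} with exponent $1$ bounds the non-record probability by $c\,d(e^n)(\log n)^2$, and this is \emph{not} summable under Assumption \ref{assumpt:g22}(b): take $d(e^n)=n^{-2/3}$, which gives $N=3$ and satisfies both parts of Assumption \ref{assumpt:g22}, yet $\sum_n n^{-2/3}(\log n)^2=\infty$. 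Summing Corollary \ref{cor:sumlevel} over the sub-intervals $(r_k,r_{k+1})$ does not help, since the contribution from $k$ near $n$ already produces the same order. The missing ingredient is the identity
\[
S^{(N)}_{r_n-1}=\min_{1\le k\le N-1}\Bigl\{\textstyle\sum_{i=1}^{(n-1)-(N-k)}\sigma_{(i)}+\Sigma^{(k)}\Bigr\},
\]
where $\Sigma^{(k)}$ is the non-record sum with its largest $k-1$ terms removed. Decomposing over $k$ lets one pair a record bound of order $d(e^n)^{N-1-k}\log n$ with a non-record bound of order $(d(e^n)\log n)^{k}\log n$ from Corollary \ref{cor:sumlevel} with exponent $k$; the product is of order $d(e^n)^{N-1}(\log n)^{k+2}\le d(e^n)^{N-1}(\log n)^{N}$, which is exactly the quantity Assumption \ref{assumpt:g22}(b) makes summable. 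Without this interplay between how many of the top $N-1$ terms come from records versus non-records, the $(\log n)^N$ in the assumption cannot be exploited.
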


\begin{proposition}[Lower bound on sum/max ratio]
\label{prop:lb}
If Assumptions \ref{assumpt:sosv} and \ref{assumpt:g22}(a) (with $N < \infty$) hold, then for each $\varepsilon > 0$ sufficiently small, as $i \to \infty$,
\[ \frac{S_i}{m_i} > N - 1 - \varepsilon \]
holds infinitely often.
\end{proposition}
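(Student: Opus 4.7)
The plan is to exhibit infinitely many indices $i = r_n$ at which the top $N-1$ records are tightly clustered around $\sigma_{(n)}$. Fixing $\delta = \delta(\varepsilon) < \varepsilon/(N-1)$, I aim to show that the event
\[ A_n := \{\sigma_{(n-N+2)} \geq (1-\delta)\sigma_{(n)}\} \]
occurs for infinitely many $n$ almost surely. On $A_n$, since records are strictly increasing, $\sigma_{(k)} \geq (1-\delta)\sigma_{(n)}$ for every $k \in \{n-N+2, \dots, n\}$, whence
\[ \frac{S_{r_n}}{m_{r_n}} \geq \frac{1}{\sigma_{(n)}} \sum_{k=n-N+2}^{n} \sigma_{(k)} \geq (N-1)(1-\delta) > N-1-\varepsilon, \]
as required.

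To analyse $A_n$, I will use the standard representation $T_n := \log L(\sigma_{(n)}) = E_1 + \cdots + E_n$ for i.i.d.\ $\mathrm{Exp}(1)$ variables $(E_j)_{j \geq 1}$, valid since $L$ is continuous by Assumption \ref{assumpt:sosv} (cf.\ Lemma \ref{lem:probrecord}). Rewriting $A_n = \{\sigma_{(n)} \leq \sigma_{(n-N+2)}/(1-\delta)\}$ and applying $\log L$ gives $A_n = \{Y_n \leq \phi(\sigma_{(n-N+2)})\}$, where $Y_n := T_n - T_{n-N+2} = E_{n-N+3} + \cdots + E_n$ is a $\mathrm{Gamma}(N-2,1)$ variable independent of $\sigma_{(n-N+2)}$, and $\phi(u) := \log[L(u/(1-\delta))/L(u)]$. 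The second-order slow-variation assumption (with $k(v) = \log v$) gives $\phi(u) \sim -\log(1-\delta)\, g(u)$ as $u \to \infty$, so for all sufficiently large $n$,
\[ A_n \supseteq \{Y_n \leq (\delta/2)\, g(\sigma_{(n-N+2)})\}. \]

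To apply Borel--Cantelli II, I subsample at $n_k := k(N-1)$, so that the index sets $\{n_k-N+3, \dots, n_k\}$ are pairwise disjoint and $(Y_{n_k})_{k \geq 1}$ forms a mutually independent family. The remaining obstacle is the random threshold $g(\sigma_{(n_k-N+2)})$, which is correlated across $k$; I handle this by pushing it down to a deterministic minorant. The SLLN $T_m/m \to 1$ almost surely gives $T_{n_k-N+2} \leq 2 n_k$ eventually almost surely, and on this a.s.\ event the monotonicity of $g$ (Assumption \ref{assumpt:sosv}) yields $g(\sigma_{(n_k-N+2)}) \geq g(L^{-1}(e^{2n_k})) = d(e^{2n_k})$. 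Hence $A_{n_k}$ eventually contains the independent event $\mathcal{E}_k := \{Y_{n_k} \leq (\delta/2)\, d(e^{2n_k})\}$, and the small-argument asymptotic for the $\mathrm{Gamma}(N-2,1)$ CDF gives
\[ \mathbf{P}(\mathcal{E}_k) \sim \frac{((\delta/2) d(e^{2n_k}))^{N-2}}{(N-2)!}. \]
Since $d = g \circ L^{-1}$ is eventually decreasing, a standard subsampling comparison for decreasing positive series shows that $\sum_k d(e^{2n_k})^{N-2}$ diverges whenever $\sum_m d(e^m)^{N-2}$ does, and the latter is precisely Assumption \ref{assumpt:g22}(a). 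Borel--Cantelli II then gives $\mathcal{E}_k$, and hence $A_{n_k}$, infinitely often almost surely, completing the proof. The main obstacle is this decoupling step: a direct expectation-based computation of $\mathbf{P}(A_n)$ does give the right order of magnitude $d(e^n)^{N-2}$, but leaves the events correlated across $n$ through their shared random reference level, so standard Borel--Cantelli does not immediately apply.
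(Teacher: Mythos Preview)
Your proof is correct and takes a genuinely different route from the paper's. Both arguments exhibit, infinitely often, a cluster of $N-1$ terms each within a factor $1-\delta$ of the current maximum, but the clusters are built differently. The paper fixes the $n$th record $\sigma_{(n)}$ and looks \emph{forward}, asking that the first $N-2$ exceedences of the level $(1-\varepsilon)\sigma_{(n)}$ after $r_n$ all fall below $\sigma_{(n)}$; this event is measurable with respect to $\mathcal{F}_n = \sigma(\sigma_1,\dots,\sigma_{r_{n+1}})$, and its conditional probability given $\mathcal{F}_{n-1}$ is exactly $(1-L((1-\varepsilon)\sigma_{(n)})/L(\sigma_{(n)}))^{N-2}\asymp d(e^{n})^{N-2}$, so the conditional Borel--Cantelli lemma applies directly. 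You instead look \emph{backward}, asking that the previous $N-2$ records $\sigma_{(n-N+2)},\dots,\sigma_{(n-1)}$ already lie within $(1-\delta)\sigma_{(n)}$; via the exponential representation this becomes a small-ball event for a $\mathrm{Gamma}(N-2,1)$ variable $Y_n$, and you recover independence by subsampling along $n_k=k(N-1)$ and replacing the random threshold $g(\sigma_{(n_k-N+2)})$ by the deterministic minorant $d(e^{2n_k})$ using the SLLN for $T_m$ and the monotonicity of $g$.

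The trade-off is that the paper's conditional Borel--Cantelli argument is shorter and avoids the decoupling manoeuvre, while your approach is more elementary (standard Borel--Cantelli II on genuinely independent events) and makes the role of the exponential record structure completely explicit. One minor point worth stating in your write-up: the inclusion $\mathcal{E}_k\subseteq A_{n_k}$ holds only eventually on an almost-sure event (where both $T_{n_k-N+2}\le 2n_k$ and $\phi(u)\ge(\delta/2)g(u)$ have kicked in), so the final step is really ``$\{\mathcal{E}_k\text{ i.o.}\}\cap\Omega_0\subseteq\{A_{n_k}\text{ i.o.}\}$'' with $\mathbf{P}(\Omega_0)=1$; this is of course what you mean, but the phrase ``eventually contains the independent event'' slightly blurs that the containment is $\omega$-wise on a full-measure set rather than a set inclusion between events.
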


\begin{remark}
Of course, the control on the rate of convergence in the second part of Proposition \ref{prop:ub} under Assumption \ref{assumpt:g22}(b) is stronger than we need for Theorem \ref{thm:main4}, but this extra control will be useful in Section \ref{sec:fav}.
\end{remark}

\subsubsection{Upper bound on sum/max ratio}

Recall, for each $n \in \mathbb{N}$, the notation $r_n$ and  $\sigma_{(n)}$ for the location and magnitude of the $n$th record from Section \ref{subsec:prelim}, and let $S_{(n)}^-$ denote the sum of the collection $\{\sigma_i\}_{i < r_n}$. Further, for each $\varepsilon > 0$ and $n\in\mathbb{N}$ define the events
\[  \mathcal{A}^\varepsilon_n := \left\{  \frac{ S_{(n)}^- }{  \sigma_{(n-1)} } > N-1 +  \varepsilon \right\},\qquad  \bar{\mathcal{A}}^\varepsilon_n := \left\{  \frac{ S^{(N)}_{r_n-1}}{  \sigma_{(n-1)} } > \frac{ \varepsilon} {\log n} \right\} .  \]
Since the ratio of the sum to the max is increasing up until new records of the sequence, to establish Proposition \ref{prop:ub}, by the Borel-Cantelli lemma it is sufficient to prove the following.

\begin{lemma}\label{anlem} Suppose Assumption \ref{assumpt:sosv} holds. For each $\varepsilon > 0$ we have that
\[ \sum_{n \in \mathbb{N}} \mathbf{P} \left(  \mathcal{A}_n^\varepsilon  \right) < \infty .\]
Moreover, if Assumption \ref{assumpt:g22}(b) (with $N < \infty$) also holds, then additionally
\[ \sum_{n \in \mathbb{N}} \mathbf{P} \left(  \bar{\mathcal{A}}_n^\varepsilon  \right) < \infty .\]
\end{lemma}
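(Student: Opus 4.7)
Both events in the lemma reduce to bounding tail probabilities of $S^{(N)}_{r_n-1}/\sigma_{(n-1)}$. Writing $S_M = \sigma_M^{(1)} + \cdots + \sigma_M^{(N-1)} + S^{(N)}_M$ and bounding each of the first $N-1$ order statistics by $m_M$ gives $S_M/m_M \le (N-1) + S^{(N)}_M/m_M$. Taking $M = r_n - 1$ (so that $m_M = \sigma_{(n-1)}$), this shows $\mathcal{A}_n^\varepsilon \subseteq \{S^{(N)}_{r_n-1} > \varepsilon \sigma_{(n-1)}\}$, while $\bar{\mathcal{A}}_n^\varepsilon$ is literally of this form with threshold $\delta_n = \varepsilon/\log n$. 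It therefore suffices to produce a summable bound on $\mathbf{P}(S^{(N)}_{r_n-1} > \delta_n \sigma_{(n-1)})$ for each of $\delta_n \in \{\varepsilon,\, \varepsilon/\log n\}$.

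I would first localise using Lemma \ref{lem:probrecord}: outside a summable $O(n^{-2})$ event one has $\sigma_{(n-1)} \ge L^{-1}(e^{(n-1)(1-\varepsilon')})$ and $r_n - 1 \le 2 L(\sigma_{(n-1)}) \log n$. Next, condition on $(\sigma_{(n-1)}, r_{n-1}) = (\ell, p)$: by the strong Markov property at $r_{n-1}$ the ``future'' values $\sigma_{p+1}, \ldots, \sigma_{r_n-1}$ are i.i.d.\ copies of $\sigma_0 \mid \sigma_0 \le \ell$ with geometric count (bounded by $cL(\ell)\log n$ outside a further summable event), while the ``past'' values $\sigma_1, \ldots, \sigma_{p-1}$ decompose into the records $\sigma_{(1)}, \ldots, \sigma_{(n-2)}$ and non-records that, conditional on the record positions, are themselves i.i.d.\ copies of $\sigma_0$ capped below $\sigma_{(k-1)} \le \ell$ on the $k$th inter-record interval.

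The core estimate is then to apply Corollary \ref{cor:sumlevel} to the future sum excluding its top $N-2$ values: with $i \sim L(\ell)\log n$, $\ell_i = \ell$ and $k = N-1$, this yields a bound of order $\delta_n^{-1} (L(\ell)\log n \cdot g(\ell)/L(\ell))^{N-1} = \delta_n^{-1}(d(e^n)\log n)^{N-1}$. The contribution from the past is handled in a complementary way: the record sum $\sigma_{(1)}+\cdots+\sigma_{(n-2)}$ is controlled by Lemma \ref{sumsofrecords} (applied with $n \to n-1$ and $k = N-1$, giving $\le (N-2+\eta)\sigma_{(n-1)}$ outside a summable event), and the past non-records, being conditionally i.i.d.\ on each inter-record interval and capped below $\ell$, are dominated by a further application of Corollary \ref{cor:sumlevel}. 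A union bound over the $O(1)$ possible partitions of the top $N-1$ order statistics of $\{\sigma_i : i < r_n\}$ into records vs.\ non-records and past vs.\ future then combines these estimates into a single bound of order $\delta_n^{-1}(d(e^n)\log n)^{N-1}$.

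Summing over $n$: for part 1, $\delta_n = \varepsilon$ gives summands of order $(d(e^n)\log n)^{N-1}$, summable precisely by the definition of $N$; for part 2, $\delta_n = \varepsilon/\log n$ gives $(\log n)^N d(e^n)^{N-1}$, summable by Assumption \ref{assumpt:g22}(b). The \textbf{main obstacle} is the past contribution: the conditional distribution of $\sigma_1, \ldots, \sigma_{p-1}$ given $(\sigma_{(n-1)}, r_{n-1}) = (\ell, p)$ is constrained to embed exactly $n-2$ records at specific positions, so Corollary \ref{cor:sumlevel} does not apply directly to an unrestricted i.i.d.\ sample. The workaround is to decompose the past into record values (bounded via Lemma \ref{sumsofrecords}) and non-record values (bounded via Corollary \ref{cor:sumlevel} on each inter-record interval separately), using the geometric decay of $\sigma_{(k-1)}/\sigma_{(n-1)}$ in $n-k$ from Lemma \ref{momentcomparison} to ensure that the earlier inter-record intervals contribute only lower-order terms; the combinatorial bookkeeping around the partitioning of the top $N-1$ order statistics is delicate but routine.
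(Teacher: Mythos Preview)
Your proposal is correct and follows essentially the same route as the paper, with one structural difference worth noting. You unify the two parts by observing that $\mathcal{A}_n^\varepsilon \subseteq \{S^{(N)}_{r_n-1} > \varepsilon\,\sigma_{(n-1)}\}$ via the trivial inequality $S_M/m_M \le N-1 + S^{(N)}_M/m_M$, and then treat both statements as instances of a single tail bound on $S^{(N)}_{r_n-1}$ with thresholds $\delta_n \in \{\varepsilon,\,\varepsilon/\log n\}$. The paper instead handles part~1 with a direct decomposition of $\mathcal{A}_n^\varepsilon$ into events $\{\text{record sum} \ge (k+\varepsilon/2)\sigma_{(n-1)}\} \cap \{\text{non-record sum} \ge (N-2-k+\varepsilon/2)\sigma_{(n-1)}\}$ for $k=0,\dots,N-1$, and only introduces the order-statistic viewpoint for part~2, via the identity $S^{(N)}_{r_n-1} = \min_{k}\{\sum_{i \le (n-1)-(N-k)}\sigma_{(i)} + \Sigma^{(k)}\}$. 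Your reduction is a genuine economy: once the $S^{(N)}$ bound is in hand, part~1 is immediate.

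Beyond that, the mechanics are the same. Your ``union bound over partitions of the top $N-1$ order statistics into records versus non-records'' is exactly the paper's decomposition over the index~$k$ (the level at which $\Sigma^{(k)}$ crosses $\delta_n\sigma_{(n-1)}/2$), and both routes feed Lemma~\ref{sumsofrecords} into the record piece and Corollary~\ref{cor:sumlevel} into the non-record piece to produce the product structure $d(e^{n(1-\varepsilon)})^{j} \cdot (d(e^{n(1-\varepsilon)})\log n)^{N-1-j}$. One simplification you could adopt from the paper: there is no need for your separate past/future split of the non-records, since conditional on the record data every non-record trap is stochastically dominated by $\sigma_0 \mid \sigma_0 \le \sigma_{(n-1)}$ regardless of which inter-record block it lies in; this lets Corollary~\ref{cor:sumlevel} be applied once to the whole non-record collection rather than block by block.
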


\begin{proof} By definition, we have that
\[S_{(n)}^- =\sum_{i=1}^{n-1}\sigma_{(i)}+\sum_{i=1}^{r_n-1}\sigma_i\mathbf{1}_{\{i\not\in\mathcal{R}\}},\]
where we recall that $\mathcal{R}$ is the collection of record traps $(r_n)_{n\geq 1}$. Now, conditional on $\{(r_i,\sigma_{(i)}):\:i\leq n\}$, the traps that contribute to the second sum are independent. Moreover, for $i\in (r_{m-1},r_{m})$, $m\in\{1,\dots,n\}$, we have that the traps are distributed as $\sigma_0|\{\sigma_0\leq \sigma_{(m-1)}\}$, and so are stochastically dominated by $\sigma_0|\{\sigma_0\leq \sigma_{(n-1)}\}$. It follows that
\[\mathbf{P}\left(\sum_{i=1}^{r_n-1}\sigma_i\mathbf{1}_{\{i\not\in\mathcal{R}\}}\geq \lambda\sigma_{(n-1)}\:\vline\:(r_i,\sigma_{(i)}):\:i\leq n\right) \leq F \left(r_n, \sigma_{(n-1)}, \floor{\lambda}+1, \lambda - \floor{\lambda} \right),\]
where, recalling the notation for $S^{(k)}_r$ from Section \ref{subsec:prelim},
\begin{equation}\label{Fdef}
 F(r, l, k, \delta):=\mathbf{P}({S}^{(k)}_{r}\geq  l \delta | \sigma_1, \ldots, \sigma_r \le l) .
 \end{equation}
Applying the above reasoning, we have
\begin{eqnarray*}
\lefteqn{\mathbf{P} \left(  \mathcal{A}_n^\varepsilon  \right)}\\
& \leq &
\sum_{k=0}^{N-1}\mathbf{P} \left(\sum_{i=1}^{n-1}\sigma_{(i)}\geq \left(k + \varepsilon/2  \right) \sigma_{(n-1)} , \:\sum_{i=1}^{r_n-1} \sigma_i\mathbf{1}_{\{i\not\in\mathcal{R}\}}\geq
\left(N-2-k + \varepsilon/2 \right)\sigma_{(n-1)}\right)\\
&\leq &\sum_{k=0}^{N-1}\mathbf{E}\left(\mathbf{1}_{\{\sum_{i=1}^{n-2}\sigma_{(i)}\geq (k-1+\varepsilon/2 )\sigma_{(n-1)}\}} F \left((r_n,\sigma_{(n-1)},N-1-k, \varepsilon/2  \right) \right).
\end{eqnarray*}
Recall that, by Lemma \ref{lem:probrecord} and the union bound, there exists a $c > 0$ such that, for sufficiently large $n$,
\[  \mathbf{P} \left(L(\sigma_{(n-1)}) < e^{n(1- \varepsilon)} \, , \ r_n > 2 L(\sigma_{(n-1)})\log n \right) <  c n^{-2} . \]
Applying Lemma \ref{sumsofrecords} and Corollary \ref{cor:sumlevel} (with $\delta_i = \varepsilon/2$), it follows that there exists a $c > 0$ such that, for sufficiently large $n$,
\begin{eqnarray*}
\mathbf{P} \left(  \mathcal{A}_n^\varepsilon  \right)
&<  & c n^{-2} + c \,  \sum_{k=0}^{N-1}\left( d( e^{n(1-\varepsilon)}) \log n \right)^{N-1-k}
\mathbf{P}\left(\sum_{i=1}^{n-2}\sigma_{(i)}\geq (k-1+\varepsilon/2)\sigma_{(n-1)}\right)\\
& <  &c n^{-2}+ c \left( d(e^{n(1-\varepsilon)}) \log n \right)^{N-1} .
\end{eqnarray*}
Considering the definition of $N$ in \eqref{eq:N}, and noting that a monotonic sequence $a_n$ is summable if and only if $a_{\floor{(1-\varepsilon) n}}$ is summable, this completes the proof of the first statement.

For the second statement, a similar argument holds. In particular, we start by noting that
\[S_{r_n-1}^{(N)}=\min_{k=1,\dots,N-1}\left\{\sum_{i=1}^{(n-1)-(N-k)}\sigma_{(i)}+\Sigma^{(k)}\right\},\]
where $\Sigma^{(k)}$ is the sum $\sum_{i=1}^{r_n-1}\sigma_i\mathbf{1}_{\{i\not\in\mathcal{R}\}}$ with the largest $k-1$ terms excluded. Since
\[\left\{S_{r_n-1}^{(N)}>\varepsilon\sigma_{(n-1)}/\log n,\:\Sigma^{(k)}<\varepsilon\sigma_{(n-1)}/2\log n\right\}\subseteq\left\{\sum_{i=1}^{(n-1)-(N-k)}\sigma_{(i)}>\varepsilon\sigma_{(n-1)}/2\log n\right\},\]
decomposing the probability space into regions where $\Sigma^{(k)}>\varepsilon\sigma_{(n-1)}/2\log n\geq\Sigma^{(k+1)}$ thus yields
\begin{eqnarray*}
{\mathbf{P} \left( \bar{\mathcal{A}}_n^\varepsilon  \right)}&\leq &\mathbf{P} \left(\sum_{i=1}^{(n-1)-(N-1)}\sigma_{(i)}>\varepsilon\sigma_{(n-1)}/2\log n \right)\\
&&+\sum_{k=1}^{N-1}\mathbf{E}\left(\mathbf{1}_{\{\sum_{i=1}^{(n-1)-(N-k-1)}\sigma_{(i)}\geq \varepsilon\sigma_{(n-1)}/2\log n\}} F \left(r_n,\sigma_{(n-1)},k, \varepsilon/2\log n  \right) \right).
\end{eqnarray*}
Noting that the proof of Lemma \ref{sumsofrecords} shows that
\[\mathbf{P} \left(\sum_{i=1}^{(n-1)-j}\sigma_{(i)}>\varepsilon\sigma_{(n-1)}/2\log n \right) < cn^{-2}+cd(e^{n(1-\varepsilon)})^j\log n,\]
uniformly for $j=1,\dots,N-1$, and applying Corollary \ref{cor:sumlevel} with $\delta_i = \varepsilon/2\log i$ similarly to above, it follows that
\[\mathbf{P} \left( \bar{\mathcal{A}}_n^\varepsilon  \right) < cn^{-2}+
cd(e^{n(1-\varepsilon)})^{N-1}\left(\log n\right)^{N},\]
which is summable on Assumption~\ref{assumpt:g22}(b).
\end{proof}

\subsubsection{Lower bound on sum/max ratio}

Again recall, for each $n \in \mathbb{N}$, the notation $r_n$ and $\sigma_{(n)}$ from Section~\ref{subsec:prelim}, and let $\{  r_n^{(1)}, r_n^{(2)}, \ldots , r_n^{(N-2)}  \}$ denote the indices of the largest $N-2$ terms of the sequence lying between $r_n$ and $r_{n+1}$ in increasing order of index (if there are insufficient terms, set the undefined terms to be equal to $r_{n+1}$), abbreviating $\sigma_{(n)}^{(i)} := \sigma_{r_n^{(i)}}$. To establish the lower bound we consider the following event, defined for each $\varepsilon \in (0, 1)$ and $n\in\mathbb{N}$,
\[ \mathcal{A}^{\varepsilon}_n :=   \bigcap_{1 \le i \le N-2}   \left\{ \sigma^{(i)}_{(n)} / \sigma_{(n)} \in  (1 - \varepsilon, 1) \right\} .\]
Clearly on the event $\mathcal{A}^\varepsilon_n$ we have that
\[ \frac{ S_{r_n^{(N-2)}} }{m_{r_n^{(N-2)}}}    >  (N-1)(1-\varepsilon), \]
and so in order to establish Proposition \ref{prop:lb} it is sufficient to prove that for each $\varepsilon \in (0, 1)$ the events $\mathcal{A}_n^\varepsilon$ hold infinitely often. That this is true can be deduced by applying the following lemma in conjunction with the conditional Borel-Cantelli lemma.

\begin{lemma}
Let $\mathcal{F}_n$ denote the $\sigma$-algebra generated by $\sigma_1,\sigma_2,\dots,\sigma_{r_{n+1}}$. Then $\mathcal{A}_n^\varepsilon\in \mathcal{F}_n$, and, on Assumptions \ref{assumpt:sosv} and \ref{assumpt:g22}(a) (with $N < \infty$),
\[  \sum_n \mathbf{P}(\mathcal{A}^\varepsilon_{n} | \mathcal{F}_{n-1}) = \infty\]
almost-surely.
\end{lemma}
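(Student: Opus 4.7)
The plan is as follows. The measurability claim $\mathcal{A}_n^\varepsilon \in \mathcal{F}_n$ is immediate: by construction each index $r_n^{(i)}$ lies in $[r_n, r_{n+1}]$, so $\sigma_{(n)}^{(i)}$ and $\sigma_{(n)}$ are determined by $\sigma_1, \ldots, \sigma_{r_{n+1}}$ (in fact $\sigma_{(n)}$ is $\mathcal{F}_{n-1}$-measurable). The case $N = 2$ is trivial, as $\mathcal{A}_n^\varepsilon$ is then an empty intersection, so assume $N \ge 3$.

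The key step is to compute $\mathbf{P}(\mathcal{A}_n^\varepsilon \mid \mathcal{F}_{n-1})$ essentially exactly. Conditional on $\mathcal{F}_{n-1}$, the index $r_n$ and the value $M := \sigma_{(n)}$ are known, while the entries $\sigma_{r_n+1}, \sigma_{r_n+2}, \ldots$ are i.i.d.\ copies of $\sigma_0$ independent of $\mathcal{F}_{n-1}$. By continuity of $L$, the entries are a.s.\ distinct, and I would argue that $\mathcal{A}_n^\varepsilon$ is then equivalent to the event that at least $N - 2$ of the entries $\sigma_{r_n + 1}, \ldots, \sigma_{r_{n+1} - 1}$ fall in the near-record window $((1 - \varepsilon)M, M]$: with any fewer, the top $N-2$ would either include an entry $\le (1-\varepsilon)M$ or an undefined value $\sigma_{(n+1)} > M$, both incompatible with $\sigma_{(n)}^{(i)}/\sigma_{(n)} \in (1-\varepsilon, 1)$. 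A standard thinning argument — each fresh draw is a record with probability $q := 1/L(M)$, a near-record with probability $p := 1/L((1-\varepsilon)M) - 1/L(M)$, or irrelevant otherwise — gives that $K_n$, the number of near-records in the block, is geometrically distributed on $\{0,1,2,\ldots\}$ with success parameter $q/(p+q)$, so that
\[
\mathbf{P}\bigl(\mathcal{A}_n^\varepsilon \mid \mathcal{F}_{n-1}\bigr) \;=\; \left(\frac{p}{p+q}\right)^{N-2} \;=\; \left(1 - \frac{L((1-\varepsilon)M)}{L(M)}\right)^{N-2}.
\]

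It remains to bound these conditional probabilities from below along the sequence. Second-order slow variation with $k(v) = \log v$ yields $1 - L((1-\varepsilon)u)/L(u) \sim |\log(1-\varepsilon)|\, g(u)$ as $u \to \infty$. By Lemma~\ref{lem:probrecord}, for any $\delta > 0$ we have $\sigma_{(n)} \le L^{-1}(e^{n(1+\delta)})$ for all large $n$ almost surely, and by the eventual monotonicity of $g$ this gives $g(\sigma_{(n)}) \ge d(e^{n(1+\delta)})$ eventually a.s. Hence, for some constant $c_\varepsilon > 0$,
\[
\mathbf{P}\bigl(\mathcal{A}_n^\varepsilon \mid \mathcal{F}_{n-1}\bigr) \;\ge\; c_\varepsilon\, d(e^{n(1+\delta)})^{N-2}
\]
for all large $n$ almost surely. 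A standard integral comparison, relying on the eventual monotonicity of $d$, shows that $\sum_n d(e^{n(1+\delta)})^{N-2}$ converges if and only if $\sum_n d(e^n)^{N-2}$ does, and Assumption~\ref{assumpt:g22}(a) provides the divergence of the latter. The only delicate point I foresee is correctly identifying $\mathcal{A}_n^\varepsilon$ with $\{K_n \ge N-2\}$ under the paper's convention for undefined $r_n^{(i)}$, but that convention produces ratios strictly greater than one and is therefore automatically excluded by the window $(1-\varepsilon, 1)$, so no correction is needed.
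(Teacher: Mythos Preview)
Your proof is correct and follows essentially the same route as the paper's: the paper observes that $\mathcal{A}_n^\varepsilon$ is precisely the event that the first $N-2$ exceedences of level $(1-\varepsilon)\sigma_{(n)}$ after $r_n$ do not also exceed $\sigma_{(n)}$, yielding the identical formula $\mathbf{P}(\mathcal{A}_n^\varepsilon \mid \mathcal{F}_{n-1}) = (1 - L((1-\varepsilon)\sigma_{(n)})/L(\sigma_{(n)}))^{N-2}$, and then bounds this below via second-order slow variation, Lemma~\ref{lem:probrecord}, and the monotonicity of $g$ exactly as you do. Your thinning/geometric-distribution phrasing is just an alternative packaging of the same computation.
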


\begin{proof} That $\mathcal{A}_n^\varepsilon\in \mathcal{F}_n$ is clear by definition. For the second claim, we note that the event $\mathcal{A}^\varepsilon_n$ is just the event that the first $N-2$ exceedences of the level $\sigma_{(n)} (1 - \varepsilon)$ after $r_n$ do not also exceed the level $\sigma_{(n)}$. Hence
\begin{eqnarray*}
\mathbf{P}(\mathcal{A}^\varepsilon_{n}\: |\: \mathcal{F}_{n-1}) &=&\mathbf{P}\left(\sigma_0<u\:|\:\sigma_0>(1-\varepsilon)u\right)^{N-2}|_{u=\sigma_{(n)}}\\
&=&\left(1-\frac{L((1-\varepsilon)\sigma_{(n)})}{L(\sigma_{(n)})}\right)^{N-2}.
\end{eqnarray*}
Combining with the bounds on $\sigma_{(n)}$ from Lemma \ref{lem:probrecord} and the definition of second-order slow-variation, it follows that there exists a $c > 0$ such that
\[\mathbf{P}(\mathcal{A}^\varepsilon_{n}\: |\: \mathcal{F}_{n-1})
\sim \left(-k(1-\varepsilon)g(\sigma_{(n)})\right)^{N-2} \ge c d(e^{n(1+\varepsilon)} )^{N-2}\]
eventually, where we have used the eventual monotonicity of $g$ guaranteed by Assumption~\ref{assumpt:sosv}. Again noting that a monotone sequence $a_n$ is summable if and only if $a_{\floor{(1+\varepsilon) n}}$ is summable, the above sequence is not summable on Assumption \ref{assumpt:g22}(a), completing the proof.
\end{proof}

\subsection{Hyperbolic exceedences}

In this section we study certain `hyperbolic' exceedences; the relevance of these exceedences to the localisation of the BTM will be made clear in Section \ref{sec:loc}. Note that we do not apply Assumptions \ref{assumpt:sosv} or \ref{assumpt:g22} here.

Before defining these hyperbolic exceedences, recall that $i_x$ indicates the first exceedence of a level $x > 0$ of the sequence $(\sigma_i)_{i \in \mathbb{N}}$, and $S_i$ denotes the cumulative sum of the sequence up to an index~$i$. Define an auxiliary function $h_t$ such that $h_t \to \infty$ as $t \to \infty$ sufficiently slowly such that
\begin{align}
\label{eq:h}
  \frac{L(t h^3_t)}{L(t)} < 1 + \frac{1}{h_t} \quad \text{and} \quad  \frac{L(t /h_t^3)}{L(t)} > 1 - \frac{1}{h_t}
\end{align}
eventually. We note that the choice of such a $h_t$ is possible for any slowly varying function $L$; see \cite{Croydon15} for an explicit construction. We define the \textit{hyperbolic exceedences} of the level~$t$ to be the sites
\begin{equation}\label{jtdef}
j_t:= \min \{ i : i S_i>t/h_t\} \quad \text{and} \quad  j_t^- := {\rm{argmax}} \{ \sigma_i : i \leq j_t  \} .
\end{equation}

\begin{lemma}[Typical hyperbolic exceedences]
\label{lem:jprob}
For each $t \ge 0$, denote
\begin{equation}\label{elltdef}
 \ell_t :=  \min \{s \ge 0: s L(s) \ge t \} ,
 \end{equation}
which is well-defined since $L$ is c\`{a}dl\`{a}g. Then, as $t \to \infty$,
\[ \mathbf{P} \left( i_{\ell_t}=j_t=j_t^- \right) \to 1 .\]
\end{lemma}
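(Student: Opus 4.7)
The plan is to reduce the proof to showing $\mathbf{P}(i_{\ell_t} = j_t) \to 1$, since on the event $\{i_{\ell_t} = j_t\}$ we have $\sigma_{j_t} = \sigma_{i_{\ell_t}} > \ell_t \geq \sigma_j$ for every $j < j_t$, so that $j_t$ is the strict argmax and $j_t^- = j_t$ follows automatically. For the easy direction $j_t \leq i_{\ell_t}$, the monotonicity of $i \mapsto iS_i$ reduces the task to showing $i_{\ell_t}S_{i_{\ell_t}} > t/h_t$ with high probability. Since $i_{\ell_t} S_{i_{\ell_t}} \geq i_{\ell_t}\sigma_{i_{\ell_t}} > i_{\ell_t}\ell_t$, and Lemma~\ref{lem:iprob} provides that $i_{\ell_t}/L(\ell_t)$ is bounded below in probability, on a high-probability event we have $i_{\ell_t}\ell_t \geq c L(\ell_t)\ell_t \geq ct$ by the defining inequality $\ell_t L(\ell_t) \geq t$, and since $h_t \to \infty$ this certainly exceeds $t/h_t$.

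For the matching bound $j_t \geq i_{\ell_t}$, again by monotonicity of $i\mapsto iS_i$, it suffices to show $(i_{\ell_t}-1)S_{i_{\ell_t}-1} \leq t/h_t$ with high probability. The key observation is that the defining property \eqref{eq:h} of $h_t$ provides a useful separation of scales: conditional on $\sigma_0 > \ell_t/h_t^3$, the probability that additionally $\sigma_0 > \ell_t$ equals $L(\ell_t/h_t^3)/L(\ell_t) \geq 1 - 1/h_t$. Consequently, the number of $\sigma_j$ landing in the intermediate range $(\ell_t/h_t^3, \ell_t]$ before the first exceedence of $\ell_t$ is stochastically dominated by the failure count of a geometric trial with success probability at least $1 - 1/h_t$, which has mean $O(1/h_t) \to 0$. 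Combined with the upper bound $i_{\ell_t} \leq CL(\ell_t)$ on a high-probability event (again from Lemma~\ref{lem:iprob}), this lets me restrict to an event $A_t$ of probability tending to one on which every $\sigma_j$ with $j < i_{\ell_t}$ satisfies $\sigma_j \leq \ell_t/h_t^3$.

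On $A_t$ we have $(i_{\ell_t}-1)S_{i_{\ell_t}-1} \leq CL(\ell_t)\, T_t$, where $T_t := \sum_{j \leq CL(\ell_t)} \sigma_j \mathbf{1}_{\{\sigma_j \leq \ell_t/h_t^3\}}$. The second, purely slowly-varying statement of Proposition~\ref{prop:sosv}, applied with $f(x) = x$ and $I = (0,1]$, gives $\mathbf{E}[\sigma_0 \mathbf{1}_{\{\sigma_0 \leq \ell_t/h_t^3\}}] = o(\ell_t/(h_t^3 L(\ell_t)))$, where slow variation of $L$ has been used to replace $L(\ell_t/h_t^3)$ by $L(\ell_t)$. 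Hence $\mathbf{E}[CL(\ell_t)\, T_t] = o(L(\ell_t)\ell_t/h_t^3) = o(t/h_t^3)$, and Markov's inequality yields $\mathbf{P}(CL(\ell_t)\, T_t > t/h_t) = o(1/h_t^2) \to 0$. Combining the two directions gives $j_t = i_{\ell_t}$ with probability tending to one, which (together with the observation in the first paragraph) completes the proof. The only subtlety — more bookkeeping than genuine obstacle — lies in carrying out the truncation in the right order: first cap $i_{\ell_t}$ using Lemma~\ref{lem:iprob}, then exploit the scale separation afforded by $h_t$ to eliminate intermediate-scale contributions, and only then apply Proposition~\ref{prop:sosv} to the truncated partial sum.
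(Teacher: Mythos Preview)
Your proof is correct and the overall architecture matches the paper's: reduce to $\mathbf{P}(i_{\ell_t}=j_t)\to 1$, handle the two inequalities separately via monotonicity of $i\mapsto iS_i$, and conclude $j_t^-=j_t$ from the fact that $i_{\ell_t}$ is a record. The details for the harder direction $j_t\geq i_{\ell_t}$, however, differ in an interesting way.

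The paper controls $S_{i_{\ell_t}-1}$ by passing through the preceding record $\sigma_{i_{\ell_t}^-}$: it uses Lemma~\ref{lem:iprob} to place $\sigma_{i_{\ell_t}^-}$ below $\ell_t/h_{\ell_t}^3$ and above a level with $L$-value comparable to $L(\ell_t)$, and then invokes Lemma~\ref{lem:sumlevel} (the order-statistics sum bound) to control $S_{i_{\ell_t}-1}/\sigma_{i_{\ell_t}^-}$. Your route bypasses the record structure entirely: the geometric argument (exceedances of $\ell_t/h^3$ are, with conditional probability $L(\ell_t/h^3)/L(\ell_t)\to 1$, immediately exceedances of $\ell_t$) directly shows that \emph{all} terms $\sigma_j$, $j<i_{\ell_t}$, lie below $\ell_t/h^3$ with high probability, after which the second statement of Proposition~\ref{prop:sosv} handles the truncated sum by a plain first-moment bound. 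This is more self-contained, avoiding both the Poisson-limit machinery behind Lemma~\ref{lem:iprob} (for the max) and the combinatorics of Lemma~\ref{lem:sumlevel}. The paper's approach, on the other hand, fits naturally with the record-based arguments used elsewhere in Section~\ref{sec:svseq}.

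One small point of bookkeeping: you apply \eqref{eq:h} at argument $\ell_t$ but write $h_t$ rather than $h_{\ell_t}$; strictly, \eqref{eq:h} yields $L(\ell_t/h_{\ell_t}^3)/L(\ell_t)>1-1/h_{\ell_t}$. Carrying $h_{\ell_t}$ through, your final Markov step needs $h_t/h_{\ell_t}^3\to 0$, which holds once $h$ is chosen slowly enough that $h_t\sim h_{\ell_t}$ --- an assumption the paper itself makes explicit just after this lemma. This is a harmless adjustment, not a gap.
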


\begin{proof} First note that, by our choice of $h_t$ in \eqref{eq:h} and applying Lemma \ref{lem:iprob}, we have that, as $t \to \infty$, $i_{\ell_t}  > L(\ell_t)   h^{-1}_{\ell_t}$ and $\sigma_{i_{\ell_t}} >  \ell_t h^3_{\ell_t}$ both hold with high probability. Then it is clear that $j_t \le i_{\ell_t}$, since then, with high probability,
\[  i_{\ell_t} S_{i_{\ell_t}} > i_{\ell_t} \sigma_{i_{\ell_t}} > L(\ell_t)  h^{-1}_{\ell_t} \times \ell_t h^3_{\ell_t} > t.\]
For the other direction, consider the proceeding record site $r_{n_t}$ prior to $i_{\ell_t}$. Applying Lemma \ref{lem:iprob} again, with high probability we have that
\[  i_{\ell_t} < L(\ell_t) h_{\ell_t}^{1/2},  \qquad \sigma_{r_{n_t}}=\sigma_{i_{\ell_t}^-} < \ell_t / h^3_{\ell_t} \qquad \text{and}\qquad L(\sigma_{i_{\ell_t}^-} )>L(\ell_t)h_{\ell_t}^{-1/3}. \]
Thus, using the notation $F$ from (\ref{Fdef}) and writing $\Gamma(x):=\mathbf{E}(\frac{\sigma_0}{x}\mathbf{1}_{{\sigma_0}\leq{x}})$,
\begin{eqnarray*}
\mathbf{P}\left(S_{i_{\ell_t} - 1} / \sigma_{r_{n_t}}>h_{\ell_t}\right)&=& \mathbf{E}\left(F(i_{\ell_t}-2,\sigma_{i_{\ell_t}^-},1,h_{\ell_t}-1)\right)\\
&\leq& \mathbf{E}\left(\min\left\{\frac{cL(\ell_t)\Gamma(\sigma_{i_{\ell_t}^-})}{\sqrt{h_{\ell_t}}},1\right\}\right)+o(1), \end{eqnarray*}
where to deduce the inequality we have applied Lemma \ref{lem:sumlevel}. Now, by Proposition \ref{prop:sosv}, $\Gamma(x)=o(L(x)^{-1})$. Hence it follows that
\[ \mathbf{P}\left(S_{i_{\ell_t} - 1} / \sigma_{r_{n_t}}>h_{\ell_t}\right)\leq
\mathbf{E}\left(\min\left\{\frac{cL(\ell_t)}{\sqrt{h_{\ell_t}}L(\sigma_{i_{\ell_t}^-})},1\right\}\right)+o(1)\leq c h_{\ell_t}^{-1/6}+o(1)\rightarrow 0.\]
This implies that with high probability eventually
\[   (i_{\ell_t}-1) S_{i_{\ell_t}-1} <  h_{\ell_t}i_{\ell_t} \sigma_{r_{n_t}} <  L(\ell_t) h_{\ell_t}^{3/2} \times \ell_t / h_{\ell_t}^3 =  t/h_{\ell_t}^{3/2} < t/h_{t}.\]
Thus we have shown that $\mathbf{P}(i_{\ell_t}=j_t)\to 1$ as $t\to \infty$. To complete the proof, we simply note that $i_{\ell_t}=j_t$ implies $j_t$ is a record, and therefore $j_t=j_t^-$.
\end{proof}

\section{Quenched localisation on $N$ sites}
\label{sec:loc}

In this section we establish that localisation takes place on no more than $N$-sites almost-surely, that is, we prove the first claim of Theorem \ref{thm:main1}. Note that this claim only has content if $N$ is finite, so wherever we work under Assumption \ref{assumpt:sosv} in this section we shall always assume that $N < \infty$. On the other hand, Assumption \ref{assumpt:g22} will not play a role in this section.

 We begin in Section \ref{subsec:def} by giving an explicit construction of the localisation set $\Gamma_t$. We note that there are several different possible approaches to defining this set; one way, for instance, would be to select a certain set of $N$ sites, whereby the cardinality of $\Gamma_t$ would be bounded by construction. We choose a construction that makes no explicit reference to cardinality; instead $\Gamma_t$ is defined to include all points lying in a certain region of $(x, \sigma_x)$-space. The advantage of this definition is that the subsequent proof that the BTM localises on $\Gamma_t$ is straightforward; we do this in Section \ref{subsec:loc}. The trade-off is that bounding the cardinality of $\Gamma_t$ is no longer straightforward, and requires a somewhat lengthy computation; we undertake this computation in Section \ref{subsec:card}. Along the way, we also establish that the localisation set $\Gamma_t$ consists of a single site with overwhelming probability, and moreover that $\Gamma_t$ is contained on the record traps eventually almost-surely if $N = 2$, hence completing the proof of Theorem \ref{thm:maincl} and the forward direction of Theorem \ref{thm:main3}.

\subsection{Defining the localisation set}
\label{subsec:def}

We shall define the localisation set $\Gamma_t$ by first considering a certain region $\mathcal{G}_t \subseteq \mathbb{Z}^+ \times \mathbb{R}^+$ defined by an outer boundary $O_t$ and a lower boundary $D_t$; the localisation set will then consist of all points $(x, \sigma_x)$ that lie inside this region. Before we define these explicitly, we first motivate our construction, which is based around certain record traps $z^{I}_t$ and $z^{O}_t$ such that $0 < z^{I}_t \le z^{O}_t$. First, we construct the site $z^{I}_t$ to be the furthest record site from the origin such that the BTM is overwhelmingly likely to have visited this site by time $t$, essentially because this site is not too far from the origin and the traps lying between it and the origin are sufficiently shallow. Naturally this construction demands that we use an upper bound on hitting times of the BTM, and indeed we approximate the time until the BTM exits a certain region by the product of the sum of the traps in the region and the length of the region. This suggests that the site  $z^{I}_t$ should be defined via the notion of {hyberbolic exceedences} (see Section \ref{sec:svseq}).

Second, we construct the site $z^{O}_t$ to be the furthest record site from the origin such that it is {possible} for the BTM to reach this site by time $t$. This time we use a lower bound for the hitting times of the BTM, approximating the time until the BTM exits a certain region by the product of the length of the region and the depth of the deepest trap in the region. We optimise our construction of $z^{O}_t$ by a process of `chaining' from the initial site $z^{I}_t$, using the observation that each new record trap that is visited by the BTM by time $t$ {reduces} the successive distance that the BTM is able to travel by time~$t$, since the BTM now has to overcome the holding time associated with this new record trap. Note that it is possible (and indeed will turn out to be likely) that the site $z^{O}_t$ is actually the same as the site $z^{I}_t$; this occurs if the chaining terminates at the first stage.

Once we have defined the sites $z^{I}_t$ and $z^{O}_t$, we construct the outer boundary $O_t$ to appropriately contain these sites, i.e. such that $0  < z^{I}_t \le z^{O}_t < O_t$. This is done in such a way to guarantee that the BTM is located within this boundary at time $t$; again we use a lower bound for hitting times of the BTM. Our construction also guarantees that $z^{O}_t$ is the largest trap located in the region $[0,O_t)$. Finally, we make use of the observation that the BTM, at any given time $t$, is more likely to be located in a deep trap than a shallow trap; the lower boundary $D_t$ represents the trap depth above which we know that the BTM is located with high probability.

We now formalise the above heuristics to give an explicit definition of the localisation set $\Gamma_t$.  In the following definitions we make reference to a certain auxiliary scaling function $h_u \to \infty$ as $u \to \infty$. We think of $h_u$ as being arbitrarily slowly growing, and indeed we shall insist that $h_u$ grows sufficiently slowly to satisfy certain conditions. First, similarly to \eqref{eq:h}, we shall require that, for any $k \in \mathbb{N}$,
\begin{align}
\label{eq:h2}
 \frac{L(t h^k_t)}{L(t)} < 1 + \frac{1}{h_t} \quad \text{and} \quad  \frac{L(t /h_t^k)}{L(t)} > 1 - \frac{1}{h_t}
\end{align}
for large $t$. Further, we shall also require that
\begin{align}
\label{eq:h3}
 {h^4_t}{L(t h_t^2)} \mathbb{E}\left[ \frac{\sigma_0}{t h_t^2}  \mathbf{1}_{\{ \sigma_0 < t h_t^2 \}} \right]  \to 0,
\end{align}
remarking that this is possible by the second statement of Proposition \ref{prop:sosv}. When working under Assumption \ref{assumpt:sosv} (with $N < \infty$), defining $\hat{h}_n := h_{2 N e^{2n} L(e^{2n}) \log n}$,
 we shall additionally require that
\begin{align}
\label{eq:h4}
 \sum_{n \in \mathbb{N}} e^{-n/2} \hat{h}_n < \infty \quad \text{and} \quad  \sum_{n \in \mathbb{N}}  \left( d(e^n) (\log n) (\hat{h}_n)^5 \right)^{N-1} < \infty.
\end{align}
Finally, for technical reasons, we shall also require that $h_u$ is continuous. The relevance of these conditions will become clear later in the section; for now, note simply that it is always possible to choose such an $h$ (see \cite{Croydon15} for remarks as to an explicit construction).

We first define the sites $z^{I}_t$ and $z^{O}_t$. The site $z^{I}_t$ is taken to be the record site $j_t^-$, as introduced at \eqref{jtdef}. To define the site $z^{O}_t$, define iteratively
\[ y_t^1:= z^{I}_t \ , \quad y_t^{i+1} := \min \{ z\in (y_t^{i},y_t^{i} +  h_t \max\{t / \sigma_{y_t^i}, 1\}) :\: \sigma_z > \sigma_{y_t^{i}} \} \]
until this chain terminates. The site $z^{O}_t$ is defined to be the last site so-defined by the chaining (which is possibly, and indeed probably, the same as the site $z^{I}_t$). Note that this method of `chaining' can be thought of as a general procedure that starts from a certain site $z^{I}_t$ and occurs at a certain time $t$; we will refer back to this general method of `chaining' in Section \ref{subsec:card}. We also note that in the process of chaining we make use of the lower bound $\max\{ \cdot, 1\}$ when extending the outer boundary from $y_t^{i}$ to $y_t^{i+1}$; this is done for technical reasons, essentially to ensure that the regions we are considering are always growing (albeit arbitrarily slowly) with $t$. This will allow us to successfully apply our holding time estimates to these regions.

We now define the localisation set $\Gamma_t$, by specifying an outer boundary $O_t$ and a lower boundary $D_t$ for the localisation region $\mathcal{G}_t$. First, define the outer boundary $O_t$ to be
\[ O_t :=  z^{O}_t + h_t \max\{ t  / \sigma_{z^{O}_t}, 1 \}  \, .\]
To define the lower boundary $D_t$, for a level $\ell > 0$ denote the quantity $S^\ell_i := \sum_{ z < i: \sigma_z < \ell } \sigma_z $, and set
\[  D_t := \max\{ \ell \ge 0 : S^\ell_{O_t} < \sigma_{z^{I}_t} / h_t  \} . \]
Note that this construction of the lower boundary can be thought of as a general procedure that is given by a certain boundary $O_t$ and level $\sigma_{z^{I}_t} / h_t$; we will refer back to this general procedure in Section \ref{subsec:card}.
We can now define the localisation set to be the point set
\[ \Gamma_t := \{ x \in \mathbb{Z} : (x, \sigma_x) \in \mathcal{G}_t \}   \]
where $\mathcal{G}_t := \{ (x, \sigma_x) :   x < O_t , \sigma_x \ge D_t \}$. Figure \ref{gtpic} shows typical and atypical configurations of this set.

\begin{figure}[ht]
\begin{tikzpicture}
\draw[thick,->] (0,0) -- (2.1, 0) node[anchor=north] {\small $O_t$} -- (4.5,0) node[anchor=north west] {\small $\mathbb{Z}$};
\draw[thick,->] (0,0) -- (0,1.6) node[anchor=east] {\small $\ell_t$}  -- (0,2) node[anchor=east] {\small $D_t$}  -- (0,4.5) node[anchor=south] {\small $L(\sigma_z)$};
\draw[thick] (0,1.6) -- (-0.1,1.6);
\draw[thick] (0,2) -- (-0.1,2);
\draw[thick] (1.6,0) -- (1.6,-0.1);
\draw[thick] (2.1,0) -- (2.1,-0.1);
\draw (0.1,4) .. controls (0.15,1.5) and (0.2,1.5) .. (4.3,1.5) ;
\draw[dashed] (2.1, 2) -- (2.1, 4);
\draw[dashed] (0, 2) -- (2.1, 2);
\foreach \Point in {(0.5,0.08), (1.1,0.76), (1.6,1.96), (2.6,2.76), (3.3,3.46)}{
    \node at \Point {\textbullet};
}
\draw (3, 1.7) node {\textbullet};
\draw[thin] (0,0) -- (0.5,0) -- (0.5, 0.1) -- (1.1, 0.1) -- (1.1, 0.8) -- (1.6, 0.8) -- (1.6, 2) -- (2.6, 2) -- (2.6, 2.8) -- (3.3, 2.8) -- (3.3, 3.5) -- (4.0, 3.5) ;
\draw (1, 3) node[anchor=south] {\small $\mathcal{G}_t$};
\draw (1.05, 2.2) node[anchor=south] {\small $\Gamma_t$};
\draw[thick,->] (1.27, 2.38) -- (1.53, 2.13) ;
\draw (4.5, 1.55) node[anchor=north] {\small $\text{hyp}_t$};
\draw (0.9, 0.1) node[anchor=north] {\small $z_t^I = z_t^O$} ;
\end{tikzpicture}
\hspace{0.3cm}
\begin{tikzpicture}
\draw[thick,->] (0,0)   -- (4.5,0) node[anchor=north west] {\small $\mathbb{Z}$};
\draw[thick,->] (0,0) -- (0,2) node[anchor=east] {\small $\ell_t$}    -- (0,4.5) node[anchor=south] {\small $L(\sigma_z)$};
\draw[thick] (0,1.7) -- (-0.1,1.7);
\draw[thick] (0,2) -- (-0.1,2);
\draw[thick] (1.6,0) -- (1.6,-0.1);
\draw[thick] (3.3,0) -- (3.3,-0.1);
\draw[thick] (3.4,0) -- (3.4,-0.1);
\draw (0.1,4) .. controls (0.15,2) and (0.2,2) .. (4.3,2) ;
\draw[dashed] (3.4, 1.7) -- (3.4, 4);
\draw[dashed] (0, 1.7) -- (3.4, 1.7);
\foreach \Point in {(0.5,0.08), (1.1,0.76), (1.6,1.96), (2.6,2.76), (3.3,3.46)}{
    \node at \Point {\textbullet};
}
\draw (3, 1.7) node {\textbullet};
\draw[thin] (0,0) -- (0.5,0) -- (0.5, 0.1) -- (1.1, 0.1) -- (1.1, 0.8) -- (1.6, 0.8) -- (1.6, 2) -- (2.6, 2) -- (2.6, 2.8) -- (3.3, 2.8) -- (3.3, 3.5) -- (4.0, 3.5) ;
\draw (1.6, 3) node[anchor=south] {\small $\mathcal{G}_t$};
\draw (3.8, 2.3) node[anchor=south] {\small $\Gamma_t$};
\draw[thick,->] (3.7, 2.85) -- (3.5, 3.25) ;
\draw[thick,->] (3.5, 2.6) -- (3, 2.65) ;
\draw[thick,->] (3.5, 2.5) -- (2.2, 2.13) ;
\draw[thick,->] (3.7, 2.4) -- (3.3, 2.1) ;
\draw (4.5, 2.05) node[anchor=north] {\small $\text{hyp}_t$};
\draw (1.6, 0.1) node[anchor=north] {\small $z_t^I$} ;
\draw (3, 0.1) node[anchor=north] {\small $z_t^O$} ;
\draw (3.6, 0) node[anchor=north] {\small $O_t$} ;
\draw (0,1.7) node[anchor=east] {\small $D_t$};
\end{tikzpicture}
\caption{The localisation set at: (i) a typical time; and (ii) a slightly later atypical time (in particular a `relocalisation time'; see Section \ref{subsec:card}). Depicted as well are (after rescaling by $L$) successive records and near records of $\{\sigma_z\}$, the cumulative sum process $S_i$, the hyperbola $\text{hyp}_t := \{(z, \sigma_z) : z \sigma_z = t\}$, and the level $\ell_t$ as defined as in Lemma \ref{lem:jprob}.}\label{gtpic}
\end{figure}
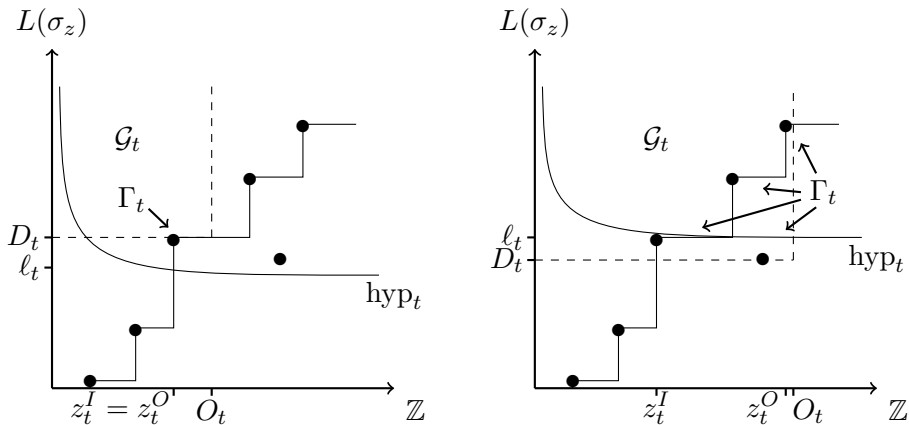

To complete this section, we prove that the localisation site consists of a single site with overwhelming probability; the precise description of this single site closely mirrors the construction in \cite{Muirhead15}. Note that this result does not require Assumption \ref{assumpt:sosv}, and holds for any slowly varying $L$.

\begin{proposition}
For each $t \ge 0$ define the level $ \ell_t$ as at (\ref{elltdef}), and denote the site $Z_t :=i_{\ell_t}$ (in the notation of Section \ref{subsec:prelim}). Then, as $t \to \infty$,
\[ \mathbf{P} \left( \Gamma_t = Z_t \right) \to 1 . \]
\end{proposition}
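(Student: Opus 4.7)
The plan is to verify, with $\mathbf{P}$-probability tending to one, three concurrent events: (i) $z^I_t = Z_t$; (ii) the chaining procedure terminates immediately, so $z^O_t = Z_t$ and every $z \in [0, O_t) \setminus \{Z_t\}$ satisfies $\sigma_z < \sigma_{Z_t}$; and (iii) $\sum_{z < O_t,\, z \ne Z_t} \sigma_z < \sigma_{Z_t}/h_t$. Assuming these, set $M := \max_{z < O_t,\, z \ne Z_t} \sigma_z$, so that $M < \sigma_{Z_t}$. At any level $\ell$ just above $M$, one has $S^\ell_{O_t} = \sum_{z \ne Z_t} \sigma_z < \sigma_{Z_t}/h_t$, while at any $\ell > \sigma_{Z_t}$ one has $S^\ell_{O_t} \ge \sigma_{Z_t} > \sigma_{Z_t}/h_t$; hence $M < D_t \le \sigma_{Z_t}$, and so $\Gamma_t = \{Z_t\}$.

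Event (i) is immediate from Lemma \ref{lem:jprob} since $z^I_t := j_t^-$ and $Z_t := i_{\ell_t}$. Inspecting the proof of Lemma \ref{lem:jprob} additionally yields, with high probability, the quantitative controls $\sigma_{Z_t} > \ell_t h_{\ell_t}^3$ and $S_{Z_t - 1} \le h_{\ell_t}\, \sigma_{i^-_{\ell_t}} \le \ell_t / h_{\ell_t}^2$, both of which we reuse below.

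For event (ii), on (i) we need that none of the at most $h_t t/\sigma_{Z_t}$ sites in $(Z_t,\, Z_t + h_t \max\{t/\sigma_{Z_t},1\})$ has trap depth exceeding $\sigma_{Z_t}$. Each independently does so with probability $1/L(\sigma_{Z_t})$, so by the union bound the failure probability is at most of order $h_t t /(\sigma_{Z_t} L(\sigma_{Z_t}))$. Using $\sigma_{Z_t} > \ell_t h_{\ell_t}^3$, slow variation of $L$ and $\ell_t L(\ell_t) \asymp t$ (by definition of $\ell_t$), this is $O(h_t / h_{\ell_t}^3) \to 0$ provided $h_u$ is taken to grow slowly enough that $h_t / h_{\ell_t}^3 \to 0$. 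For event (iii), split the sum at $Z_t$: the left-hand piece $S_{Z_t -1}$ is at most $\ell_t / h_{\ell_t}^2$ by step (i), comfortably below $\sigma_{Z_t}/(2 h_t) \ge \ell_t h_{\ell_t}^3 / (2 h_t)$; for the right-hand piece we condition on (ii) (so all traps in $(Z_t, O_t)$ lie below $\sigma_{Z_t}$) and apply Lemma \ref{lem:sumlevel} with $k=1$, $i \le h_t t/\sigma_{Z_t}$, $\ell_i = \sigma_{Z_t}$, $\delta_i = 1/(2 h_t)$. This produces a bound of order $h_t^2\, (t/\sigma_{Z_t})\, \mathbf{E}[\sigma_0/\sigma_{Z_t}\, \mathbf{1}_{\{\sigma_0 \le \sigma_{Z_t}\}}]$; the second statement of Proposition \ref{prop:sosv} makes this $o(h_t^2 t/(\sigma_{Z_t} L(\sigma_{Z_t})))$, which, combined with condition \eqref{eq:h3} evaluated at the scale $\sigma_{Z_t} > \ell_t h_{\ell_t}^2$, tends to zero.

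The main delicacy lies in step (iii): the estimate from Lemma \ref{lem:sumlevel} comes with a prefactor $h_t^2$ that must be absorbed by the decay of $L(u)\, \mathbf{E}[\sigma_0/u\, \mathbf{1}_{\{\sigma_0 \le u\}}]$ as $u \to \infty$. This is precisely what condition \eqref{eq:h3} (with its prefactor $h_u^4$) is designed to permit, so the argument closes within the flexibility available in choosing $h_u$. Apart from this calibration, all remaining work is routine union-bound and Markov-inequality bookkeeping.
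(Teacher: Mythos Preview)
Your proof is correct and follows essentially the same approach as the paper: establish $z^I_t=Z_t$ via Lemma \ref{lem:jprob}, show the chaining terminates at the first step via a union bound (so $z^O_t=Z_t$ and $R_1=\emptyset$), and then control the residual sum so that $D_t$ excludes every site other than $Z_t$. The only substantive difference is organisational: the paper first proves $O_t\le 3Z_t$ with high probability and then bounds the whole sum $\sum_{z<3Z_t,\,z\ne Z_t}\sigma_z\mathbf{1}_{\{\sigma_z\le\sigma_{Z_t}\}}$ in one application of Lemma \ref{lem:sumlevel} (with $i\approx 3Z_t<3L(\ell_t)h_t$), whereas you split the sum at $Z_t$, recycling the estimate $S_{Z_t-1}\le \ell_t/h_{\ell_t}^2$ from the proof of Lemma \ref{lem:jprob} for the left piece and applying Lemma \ref{lem:sumlevel} only on the shorter interval $(Z_t,O_t)$ for the right piece. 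Both routes close via condition \eqref{eq:h3} in the same way. A small cosmetic point: in step (ii) you should write $h_t\max\{t/\sigma_{Z_t},1\}$ rather than $h_t t/\sigma_{Z_t}$ for the window length (the case $t/\sigma_{Z_t}<1$ being even easier), as the paper does.
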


\begin{proof} Since $Z_t=i_{\ell_t}$ and $z_t^{I}=j^-_t$ we have by Lemma \ref{lem:jprob} that $\mathbf{P} ( Z_t = z_t^{I})\rightarrow 1$. From this it immediately follows that $\mathbf{P} ( Z_t \subseteq \Gamma_t  ) \to 1$. Hence, it is sufficient to show that, as $t \to \infty$,
\[ \mathbf{P} ( | \Gamma_t | = 1) \to 1. \]
To this end, we need to show that with high probability neither of the two disjoint regions
\[ R_1 :=  \{ x:\:  x < O_t,\: \sigma_x > \sigma_{z^{I}_t} \}  \quad \text{and} \quad  R_2 :=   \{x:  x < O_t,\: D_t \le \sigma_x \le \sigma_{z^{I}_t} \} \setminus \{(z_t^I, \sigma_{z_t^I})\} . \]
contains a point.

Observe that
\[\mathbf{P}\left(\exists x\in (Z_t, Z_t + h_t \max\{t / \sigma_{Z_t},1\})\mbox{ such that }\sigma_x>\sigma_{Z_t}|\:\sigma_{Z_t}\right)\leq  \frac{h_t \max\{t / \sigma_{Z_t},1\}}{L(\sigma_{Z_t})}.\]
If $\sigma_{Z_t} > \ell_t h_{\ell_t}^2$, which by Lemma \ref{lem:iprob} and our assumptions on $h_t$ in \eqref{eq:h2} holds with high probability, then the right-hand side is bounded above by
\[\max\left\{\frac{h_t}{L(\ell_t h_{\ell_t}^2)},\frac{1}{h_t}\right\},\]
and, possibly choosing an even more slowly growing $h$ than already required by our assumptions in \eqref{eq:h2}, one can check that this converges to 0 as $t\rightarrow\infty$. Since we also know from the previous paragraph that $Z_t = z_t^{I}$ with high probability, it follows that with high probability the chaining procedure terminates at the first step and we have $\mathbf{P}(Z_t = z_t^{I}=z_t^{O})\rightarrow 1$ as $t\rightarrow\infty$. In particular, this implies $\mathbf{P}(R_1=\emptyset)\rightarrow1$.

We now deal with $R_2$. For this, note that $Z_t = z_t^{I}=z_t^{O}$ implies
\[ (0 ,O_t)\subseteq (0, Z_t + h_t \max\{t / \sigma_{Z_t},1\}) . \]
Moreover, note that if $\sigma_{Z_t} > \ell_t h_{\ell_t}^2$ and $L(\ell_t) / h_{\ell_t}< Z_t$, then
\begin{align*}
h_t \max\{t / \sigma_{Z_t},1\} & \leq h_t\max\left\{t/\ell_th_{\ell_t}^2,1\right\} \le
 h_t\max\left\{L(\ell_t)/h_{\ell_t}^2,1\right\} \\& \leq  h_t\max\left\{Z_t/h_{\ell_t},1\right\}\leq 2Z_t,
 \end{align*}
where for the purposes of the final inequality, we suppose that $h$ is so slowly varying as to satisfy $h_t\sim h_{\ell_t}$.

Since the assumptions hold with high probability (by applying our previous observations and Lemma \ref{lem:iprob} again), it follows that
\begin{equation}\label{decay1}
\mathbf{P}\left(Z_t = z_t^{I},\:(0 ,O_t)\subseteq (0,3 Z_t)\right)\to 1
\end{equation}
as $t\to\infty$. Now, define
\[\tilde{S}_t := \sum_{\{x < 3Z_t\} \setminus \{z_t^I\} } \sigma_x\mathbf{1}_{\{\sigma_x \le \sigma_{Z_t}\}}.\]

Similarly to the proof of Lemma \ref{anlem}, applying Lemma \ref{lem:sumlevel} we have that eventually
\[\mathbf{P}\left(\tilde{S}_t > h_{\ell_t}^{-1}\sigma_{Z_t}|\:(Z_t,\sigma_{Z_t})\right) < h_{\ell_t}^2 Z_t\Gamma(\sigma_{Z_t}),\]
where $\Gamma(x):=\mathbf{E}(\frac{\sigma_0}{x} \mathbf{1}_{ \{ \sigma_0 < x\} })$. (For this it is useful to note that, conditional on $(Z_t,\sigma_{Z_t})$, if $x< Z_t$, then $\sigma_x$ is distributed as $\sigma_0|\{\sigma_0 \le \ell_t\}\prec \sigma_0|\{\sigma_0 \le\sigma_{Z_t}\}$, whereas if $x> Z_t$, then $\sigma_x$ is simply a copy of $\sigma_0$ and so $\sigma_x\mathbf{1}_{\{\sigma_x \le \sigma_{Z_t}\}}\prec \sigma_0|\{\sigma_0 \le \sigma_{Z_t}\}$, and all the relevant traps are independent.) Considering our assumptions on $h_t$ in \eqref{eq:h3}, along with the fact that $\sigma_{Z_t} > \ell_t h_{\ell_t}^2$ and $Z_t < L(\ell_t) h_t$ with high probability by Lemma \ref{lem:iprob}, this implies that with high probability
\[\mathbf{P}\left(\tilde{S}_t > h_t^{-1}\sigma_{Z_t}|\:(Z_t,\sigma_{Z_t})\right)  < \frac{L(\ell_t) }{h_{\ell_t}L(\ell_t h_{\ell_t}^2)}.\]
Since the upper bound here converges to $0$ as $t \to \infty$, we thus deduce $\mathbf{P}(\tilde{S}_t > h_{\ell_t}^{-1}\sigma_{Z_t})\to 0$. Combining this with (\ref{decay1}), we find that
\[\mathbf{P}\left(R_2\neq \emptyset\right) \rightarrow 1,\]
which completes the proof.
\end{proof}

In conjunction with Proposition \ref{prop:loc} (established in the next section), the previous result yields Theorem \ref{thm:maincl}.

\subsection{Localisation on the localisation set}
\label{subsec:loc}

In this section we prove that localisation occurs on the set $\Gamma_t$ eventually almost-surely. The argument follows a similar structure to that used to show localisation of one-dimensional random walk in random environments in \cite[Theorem 2.5.3]{Zeitouni}, for example, with the distinction that in our case the localisation set can consist of more than one point. In particular, we prove the following.

\begin{proposition}
\label{prop:loc}
As $t \to \infty$,
\[ P_\sigma(X_t \in \Gamma_t) \to 1 \quad \mathbf{P}\text{-almost-surely.}\]
\end{proposition}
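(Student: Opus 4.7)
The plan is to establish a deterministic (in $\sigma$) bound $P_\sigma(X_t \notin \Gamma_t) = O(1/h_t)$; since $h_t \to \infty$ is non-random, this at once yields the required almost-sure convergence as $t \to \infty$. I would split the complementary event according to whether the walker has reached the deep record $z^I_t$ by time $t$, has escaped beyond the outer boundary $O_t$, or is sitting on a trap of depth below $D_t$ inside $[0, O_t)$.

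For the hitting time of $z^I_t$, I would apply Proposition \ref{prop:hittingtimeub}: because the walker must pass through $z^I_t$ before going further, its first hitting time of $z^I_t$ from $0$ has the same distribution as in the reflected walk on $[0, z^I_t]$, so $E_\sigma[\tau_{z^I_t}] \leq 2 z^I_t \sum_{0 \leq z < z^I_t} \sigma_z$. By construction $z^I_t = j^-_t \leq j_t$, and the very definition of the hyperbolic exceedence (\ref{jtdef}) forces $(j_t - 1) S_{j_t - 1} \leq t/h_t$; hence $E_\sigma[\tau_{z^I_t}] = O(t/h_t)$ and Markov's inequality yields $P_\sigma(\tau_{z^I_t} > t) = O(1/h_t)$.

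On the event $\{\tau_{z^I_t} \leq t\}$, the strong Markov property and coupling with the reflected walk $X^{0,O_t}$ (which agrees with $X$ up to $\tau_{O_t}$) reduce matters to bounding, uniformly in $s \in [0,t]$, both $P_\sigma^{z^I_t}(\tau_{O_t} \leq s)$ and $P_\sigma^{z^I_t,0,O_t}(\sigma_{X_s} < D_t)$. For the first, Proposition \ref{prop:hittingtimelb} applied with $z = z^O_t$ gives $s/\bigl(2(O_t - z^O_t)\sigma_{z^O_t}\bigr)$; the definition $O_t = z^O_t + h_t \max\{t/\sigma_{z^O_t}, 1\}$ guarantees $(O_t - z^O_t)\sigma_{z^O_t} \geq h_t t$, making this at most $1/(2h_t)$. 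For the second, I would invoke Proposition \ref{prop:local} with starting site $z^I_t$ and the shallow set $S := \{z \in [0, O_t) : \sigma_z < D_t\}$. The required hypothesis $z^I_t \notin S$ is automatic, since any $\ell > \sigma_{z^I_t}$ would force $S^\ell_{O_t} \geq \sigma_{z^I_t} > \sigma_{z^I_t}/h_t$ (using $h_t \geq 1$), whence $D_t \leq \sigma_{z^I_t}$; and the resulting bound $\sum_{z \in S} \sigma_z/\sigma_{z^I_t} \leq 1/h_t$ follows straight from the definition of $D_t$.

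Summing the three contributions gives $P_\sigma(X_t \notin \Gamma_t) = O(1/h_t)$, a deterministic function of $\sigma$ that tends to zero, producing the claimed almost-sure convergence. The only piece of bookkeeping requiring attention is the coupling step: since the walk on $\mathbb{Z}^+$ and its reflection on $[0, O_t]$ agree before $\tau_{O_t}$, the reflected-walk estimates from Propositions \ref{prop:hittingtimeub}, \ref{prop:hittingtimelb}, and \ref{prop:local} transfer to $X$ at the cost of the additive exit-time error, which is precisely what the first term above already accounts for. This is a mild technical point rather than a genuine obstacle; the heart of the proof is really the interplay between the upper hitting-time bound (controlling how quickly the walker reaches $z^I_t$), the lower hitting-time bound (preventing premature escape from $[0, O_t)$), and the equilibrium-based localisation estimate (ruling out shallow traps), each matched to the three scales $z^I_t$, $z^O_t$, and $D_t$ built into the construction of $\Gamma_t$.
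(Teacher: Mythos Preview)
Your proposal is correct and follows essentially the same route as the paper: the paper splits the complementary event into the three lemmas ``hitting $z^I_t$'', ``staying inside $[0,O_t)$'', and ``localising on deep traps'', proved respectively via Proposition~\ref{prop:hittingtimeub} with the defining inequality $(j_t-1)S_{j_t-1}\le t/h_t$, Proposition~\ref{prop:hittingtimelb} with $z=z^O_t$ and the identity $O_t-z^O_t\ge h_t t/\sigma_{z^O_t}$, and Proposition~\ref{prop:local} together with the definition of $D_t$. Your packaging as a single deterministic $O(1/h_t)$ bound is a cosmetic difference only; the coupling with the reflected walk on $[0,O_t]$ that you flag as bookkeeping is exactly what the paper does in its third lemma.
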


Before we prove Proposition \ref{prop:loc}, we first define some notation. For each $t > 0$ and trapping landscape $\sigma$, define the random times
\[  \tau_t^1  :=   \min \{s : X_s =  z^{I}_t \} \quad \text{and} \quad  \tau_t^2  :=  \min \{ s > \tau_t^1 : X_s \ge O_t  \} . \]
Proposition \ref{prop:loc} is then an easy consequence of the following three lemmas.

\begin{lemma}[Hitting the localisation set]
As $t \to \infty$,
\[ P_\sigma(\tau_t^1 \le t ) \to 1   \quad \mathbf{P}\text{-almost-surely}  . \]
\end{lemma}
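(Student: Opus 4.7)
The plan is to use Proposition \ref{prop:hittingtimeub} to bound the quenched hitting time probability, and then exploit the defining property of the hyperbolic exceedence index $j_t$ to show that this bound is $o(1)$ uniformly (i.e.\ deterministically) in the realisation of the landscape $\sigma$.

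First, I would note that the walk $X$ on $\mathbb{Z}^+$ cannot exceed $z^I_t$ without first visiting it, so $X$ prior to $\tau_t^1$ has the same law as the inhomogeneous CTRW $X^{0, z^I_t}$ on $[0, z^I_t] \cap \mathbb{Z}$ prior to its hitting time of $z^I_t$. Applying Proposition \ref{prop:hittingtimeub} with $a = 0$, $b = z^I_t$, $x = 0$ then yields
\[ P_\sigma(\tau_t^1 \ge t) \;\le\; 2 t^{-1} \, z^I_t \sum_{0 \le z < z^I_t} \sigma_z. \]

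Next I would bound the right-hand side using the definition of $z^I_t = j_t^-$. Because $j_t^-$ is the argmax of the first $j_t$ traps, $z^I_t \le j_t$, and so $\sum_{0 \le z < z^I_t} \sigma_z$ is at most $S_{j_t-1}$ (up to a bounded boundary term arising from the indexing difference between the BTM landscape $(\sigma_x)_{x\in\mathbb{Z}^+}$ and the sequence of Section \ref{sec:svseq}). By the very definition of $j_t$ at \eqref{jtdef}, $(j_t - 1) S_{j_t - 1} \le t/h_t$, and hence
\[ z^I_t \sum_{0 \le z < z^I_t} \sigma_z \;\le\; j_t \, S_{j_t-1} \;=\; (j_t-1) S_{j_t-1} + S_{j_t-1} \;\le\; \Big(1 + \tfrac{1}{j_t-1}\Big)\tfrac{t}{h_t}, \]
whenever $j_t \ge 2$. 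Substituting into the previous display gives $P_\sigma(\tau_t^1 \ge t) = O(1/h_t)$.

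Since this bound is a deterministic consequence of the definition of $j_t$ from the environment $\sigma$, and since $j_t \to \infty$ and $h_t \to \infty$ as $t \to \infty$ for every fixed environment, the almost-sure conclusion follows immediately. I do not anticipate any real obstacle in this lemma; the argument is essentially a deterministic consequence of the definition of hyperbolic exceedence, combined with the elementary Markov-inequality-based hitting-time bound already established in Section \ref{sec:prelim}. The only minor technical care needed is to handle the boundary term in the partial sum (arising from the indexing of the BTM landscape versus the sequence conventions of Section \ref{sec:svseq}), which is absorbed into the $O(t/h_t)$ estimate since $j_t \sigma_0$ is itself $O(t/h_t)$ for every fixed $\sigma_0 < \infty$.
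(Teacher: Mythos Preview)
Your proposal is correct and follows essentially the same approach as the paper: apply the first statement of Proposition~\ref{prop:hittingtimeub} with $a=x=0$, $b=z^I_t$, and then use the minimality in the definition of $j_t$ to bound the product $z^I_t\sum_{z<z^I_t}\sigma_z$ by a quantity of order $t/h_t$. The only cosmetic difference is that the paper applies the minimality of $j_t$ directly at the index $z^I_t-1<j_t$ (yielding $\sum_{z<z^I_t}\sigma_z\le t/((z^I_t-1)h_t)$ in one step), whereas you route through $j_t$ via $z^I_t\le j_t$ and then split $j_tS_{j_t-1}=(j_t-1)S_{j_t-1}+S_{j_t-1}$; both give a bound of the form $c/h_t$ once one checks $z^I_t\ge 2$ (equivalently $j_t\ge 2$) eventually, which is the same closing observation the paper makes.
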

\begin{proof}
Applying the upper bound on hitting times in the first statement of Proposition~\ref{prop:hittingtimeub} (with $a = x = 0$ and $b = z_t^I$),
\[P_\sigma \left( \tau^1_t  \le t  \right) >  1 - 2 t^{-1} z_t^I \sum_{z < z_t^I} \sigma_z .\]
By the definition of the site $z^{I}_t$, the sum of traps here is less than ${t}/{(z^{I}_t-1)h_t}$. It is a simple exercise to check that $z_t^{I}\geq 2$ for large $t$ almost-surely, and thus the result follows.
\end{proof}

\begin{lemma}[Staying within the boundary]
\label{lem:stay}
As $t \to \infty$,
\[ P_\sigma(\tau_t^2 - \tau^1_t > t) \to 1  \quad \mathbf{P}\text{-almost-surely} . \]
\end{lemma}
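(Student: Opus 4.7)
The plan is to reduce, via the strong Markov property at time $\tau_t^1$, to the problem of bounding from below the hitting time of $O_t$ for a walk started afresh at $z_t^I$. More precisely, on the event $\{\tau_t^1 \le t\}$ (which occurs with $P_\sigma$-probability tending to $1$ $\mathbf{P}$-a.s., by the preceding lemma), by the strong Markov property we have
\[ P_\sigma(\tau_t^2 - \tau_t^1 \le t \mid \mathcal{F}_{\tau_t^1}) = P_{\sigma, z_t^I}(\tau_{O_t} \le t), \]
where $\tau_{O_t}$ denotes the first hitting time of $O_t$ by $X$. Since the distribution of $\tau_{O_t}$ only depends on the restriction of $X$ to $[0, O_t] \cap \mathbb{Z}$, it coincides with the distribution of the hitting time of $O_t$ for the reflected walk $X^{0, O_t}$ started at $z_t^I$, which is amenable to treatment via Proposition \ref{prop:hittingtimelb}.

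I would then apply the first statement of Proposition \ref{prop:hittingtimelb} with $a = 0$, $b = O_t$, $x = z_t^I$ and $z = z_t^O$, noting that by the chaining construction we have $z_t^I \le z_t^O < O_t$ (the last inequality holds since $O_t - z_t^O = h_t \max\{t/\sigma_{z_t^O}, 1\} \ge 1$ eventually). This yields
\[ P^{0, O_t}_{z_t^I}(\tau_{O_t} \le t) \le \frac{t}{2(O_t - z_t^O)\sigma_{z_t^O}}. \]
By the definition of $O_t$, one has $(O_t - z_t^O)\sigma_{z_t^O} = h_t \max\{t, \sigma_{z_t^O}\} \ge h_t \, t$, so the right-hand side is bounded by $1/(2 h_t)$, which tends to $0$ \emph{deterministically} as $t \to \infty$.

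Combining the two estimates, for all $\sigma$ (or at least for $\sigma$ in a $\mathbf{P}$-full measure set on which the construction is well-defined and the previous lemma applies),
\[ P_\sigma(\tau_t^2 - \tau_t^1 \le t) \le P_\sigma(\tau_t^1 > t) + \frac{1}{2 h_t} \longrightarrow 0 \quad \text{as } t \to \infty. \]
The conclusion then holds $\mathbf{P}$-a.s.\ trivially, since the upper bound is $\sigma$-independent once the event $\{\tau_t^1 \le t\}$ is dealt with. I do not anticipate any substantive obstacle here: the outer boundary $O_t$ was designed precisely so that the deepest trap $\sigma_{z_t^O}$ in $[0, O_t)$, together with the macroscopic gap $O_t - z_t^O$, provides a holding-time barrier of order at least $h_t \cdot t$, comfortably larger than $t$.
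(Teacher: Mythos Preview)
Your argument is correct and essentially the same as the paper's. The paper applies the strong Markov property at the slightly later time $\tau_t^O := \inf\{s : X_s = z_t^O\}$ (so that Proposition~\ref{prop:hittingtimelb} is invoked with $x = z = z_t^O$ rather than your $x = z_t^I$, $z = z_t^O$), but this leads to the identical bound $t/(2(O_t - z_t^O)\sigma_{z_t^O}) \le 1/(2h_t)$. One small redundancy: since $\tau_t^1 < \infty$ $P_\sigma$-a.s., the strong Markov property applies without restriction, and you do not need to condition on $\{\tau_t^1 \le t\}$ or invoke the preceding lemma at all; the bound $P_\sigma(\tau_t^2 - \tau_t^1 \le t) \le 1/(2h_t)$ holds directly.
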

\begin{proof} First, define the hitting time $\tau^{O}_t :=  \min\{s  : X_s= z^{O}_t \}$, which satisfies $\tau^O_t \ge \tau_t^1$. Applying the lower bound on hitting times in the first statement of Proposition \ref{prop:hittingtimelb} (with $a = 0$, $b = O_t$ and $x = z_t^O$) yields that
\[  P_\sigma(\tau_t^2 - \tau^1_t \le t) \le P_\sigma \left(\tau^2_t-\tau^{O}_t   \le t   \right) < \frac{t}{2 (O_t - z_t^O) \sigma_{z_t^O}}  ,\]
By the definition of the localisation set $\Gamma_t$
\[  O_t - z^{O}_t = h_t \max \{ t/ \sigma_{z^{O}_t}, 1 \} \ge h_t  \, t / \sigma_{z^{O}_t},  \]
and the result follows.
\end{proof}

\begin{lemma}[Localisation on the localisation set]
As $t \to \infty$,
\[ P_\sigma\left( X_t \in \Gamma_t | \:\tau_t^1 < t < \tau_t^2-\tau_t^1 \right) \to 1  \quad \mathbf{P}\text{-almost-surely}. \]
\end{lemma}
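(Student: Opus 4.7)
The plan is to reduce the claim to an application of Proposition~\ref{prop:local} for the reflected inhomogeneous CTRW via a simple coupling argument. First I would observe that, on the conditioning event, we have $\tau_t^1 \geq 0$ and hence $\tau_t^2 > t$, so the BTM has not reached the outer boundary $O_t$ by time $t$; in particular $X_t < O_t$. Thus the event $\{X_t \notin \Gamma_t\}$ intersected with the conditioning event reduces to $\{X_t \in S\}$, where
\[ S := \{x \in \mathbb{Z}^+ : x < O_t, \ \sigma_x < D_t\} . \]
It therefore suffices to show that $P_\sigma(X_t \in S, \, \tau_t^1 < t < \tau_t^2) \to 0$ $\mathbf{P}$-a.s., since $P_\sigma(\tau_t^1 < t, \, \tau_t^2 - \tau_t^1 > t) \to 1$ by the two preceding lemmas.

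Next I would apply the strong Markov property at $\tau_t^1$, at which point $X_{\tau_t^1} = z_t^I$, and couple the post-$\tau_t^1$ trajectory of $X$ with the reflected inhomogeneous CTRW $X^{0,O_t}$ on $[0,O_t]\cap\mathbb{Z}$ started from $z_t^I$, in such a way that the two processes agree on the same jumps and holding times until the first time either hits $O_t$. Since the BTM on $\mathbb{Z}^+$ and the reflected walk have identical transition rates at every site in $[0,O_t-1]\cap\mathbb{Z}$, this coupling succeeds up to the first hitting time of $O_t$. Restricted to $\{\tau_t^2 > t\}$, the BTM does not hit $O_t$ during $[\tau_t^1, t]$, so under the coupling $X_t = X^{0,O_t}_{t-\tau_t^1}$. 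Hence
\[ P_\sigma(X_t \in S,\, \tau_t^1 < t < \tau_t^2) \le \mathbf{E}^\sigma\!\left[\mathbf{1}_{\{\tau_t^1 < t\}} \, P^{0,O_t}_{z_t^I}\!\left(X^{0,O_t}_{t-\tau_t^1} \in S\right)\right] \le \sup_{u \ge 0} P^{0,O_t}_{z_t^I}\!\left(X^{0,O_t}_u \in S\right), \]
and since $z_t^I \notin S$ (the trap at $z_t^I$ is a record, far exceeding $D_t$), Proposition~\ref{prop:local} bounds the last quantity by $(\sum_{z \in S} \sigma_z)/\sigma_{z_t^I} = S^{D_t}_{O_t}/\sigma_{z_t^I}$.

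Finally, by the very definition of $D_t$ as $\max\{\ell \ge 0: S^\ell_{O_t} < \sigma_{z_t^I}/h_t\}$, we have $S^{D_t}_{O_t} < \sigma_{z_t^I}/h_t$, so the previous bound is strictly less than $1/h_t$, a deterministic null sequence. Combined with $P_\sigma(\tau_t^1 < t < \tau_t^2-\tau_t^1) \to 1$ $\mathbf{P}$-a.s.\ from Lemmas~4.2 and~\ref{lem:stay}, this yields $P_\sigma(X_t \notin \Gamma_t \mid \tau_t^1 < t < \tau_t^2 - \tau_t^1) \to 0$ $\mathbf{P}$-a.s., as required. The only place care is needed is in setting up the coupling so that the reduction to Proposition~\ref{prop:local} is valid, but this is routine given the matching transition rates in the interior; the rest is an automatic consequence of how $D_t$ was engineered.
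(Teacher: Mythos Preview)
Your proof is correct and follows essentially the same approach as the paper: both couple the post-$\tau_t^1$ process with the reflected CTRW on $[0,O_t]$, apply Proposition~\ref{prop:local} with $x=z_t^I$, and then invoke the definition of $D_t$ to obtain the bound $1/h_t$. Your write-up is slightly more explicit about the coupling and about why $z_t^I\notin S$, whereas the paper compresses this into a one-line appeal to Proposition~\ref{prop:local} and the definition of $D_t$; the substance is the same.
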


\begin{proof} For a given $t$, let $(\hat X^t_s)_{s\geq 0}$ be the inhomogeneous CTRW on $[0 ,O_t]$ in the trapping landscape $\sigma$ (see Section \ref{sec:prelim} for the definition of this Markov process), started from $z^{I}_t$, and let $\hat{P}$ denote its law. Then we have that $(X_{(s+\tau_t^1)\wedge \tau_t^2})_{s\geq 0}$ has the same distribution as $(\hat X^t_{s\wedge \tau_t^2})_{s\geq 0}$. In particular, applying the Markov property at~$\tau_t^1$, we have
\begin{eqnarray*}
{P_\sigma\left(X_t\not\in\Gamma_t|\:\tau_t^1 < t < \tau_t^2-\tau_t^1 \right)}
&\leq&\sup_{s\leq t} \hat{P} \left(\hat{X}^t_s\not\in \Gamma_t|\: \tau_t^2>t\right)\\
&\leq&\frac{\sup_{s\leq t} \hat{P} \left(\hat{X}^t_s\not\in\Gamma_t\right)}{P_\sigma\left({\tau}_t^2-\tau_t^1>t \right)}.
\end{eqnarray*}
From Lemma \ref{lem:stay}, we know the denominator converges to one almost-surely. Moreover, applying the localisation result of Proposition \ref{prop:local} (and the definition of the lower boundary $D_t$), we have that the numerator converges to zero, so we are done.
\end{proof}

\subsection{The cardinality of the localisation set}
\label{subsec:card}

In this section we establish that $|\Gamma_t| \le N$ eventually almost-surely. The fact that $X_t$ is contained on the record traps $\mathcal{R}$ eventually almost-surely if $N = 2$ will follow as an easy corollary. Throughout this section we shall work on Assumption \ref{assumpt:sosv} and additionally assume that $N < \infty$.

Similarly to the proof of Theorem \ref{thm:main4} in Section \ref{sec:svseq} above, we prove that $|\Gamma_t| \le N$ by defining a certain sequence of events for the countable sequence $\mathcal{R}$ of record sites. Broadly speaking, this event is whether there are more than $N$ sites in $\Gamma_t$ at the times when $|\Gamma_t|$ is at a local maximum. Considering the construction of $\Gamma_t$, it can be seen that $|\Gamma_t|$ is at a local maximum precisely at the `relocalisation times' between successive record traps. So let us define this event.

Recall the sequence $\mathcal{R} := (r_n)_{n\in\mathbb{N}}$ of record traps, and that $S_{(n)}^-$ denotes the sum of the traps prior to $r_n$. Set $t_0=0$, and for $n\geq 1$, define the time $t_n$ to satisfy the equation
\[ t_n / h_{t_n} = S_{(n)}^- (r_n-1)  , \]
which is well-defined since we insisted that $h_t$ be continuous. For simplicity, we abbreviate $h_{(n)} := h_{t_n}$. Note that for $t\in[t_{n-1},t_n)$, we have that $z^{I}_t=r_{n-1}$. Hence the time $t_n$ represents the `relocalisation time' between the record traps at $r_{n-1}$ and $r_n$.

Observe further that for $t\in[t_{n-1},t_n)$, the boundary $O_t$ is strictly increasing. In particular, it will be sufficient to check that $|\Gamma_t|  \le N $ at the instants immediately prior to $t \in \mathcal{T}:=\cup_{n\geq 1}\{ t_n \}$ (at least for large $n$, almost-surely). So for each $n$ define $O_{(n)}$ to be the outer boundary obtained by starting at site $r_{n-1}$ and chaining according to the procedure introduced in Section \ref{subsec:def} at time $t_{n}$. Moreover, define the lower boundary $D_{(n)}$ using the general construction from the boundary $O_{(n)}$ and level $\sigma_{(n-1)}$. Then it is clear that for all such times $t \in [t_{n-1}, t_n)$ we have $\Gamma_t\subseteq \{r_{n-1},r_n\}\cup(\cup_{i=1}^2R_i)$, where $(R_i)_{i=1}^2$ are given by the disjoint sets (depicted in Figure \ref{gtpic2})
\begin{eqnarray*}
R_1&:=&\left\{x:\:x\in (r_n,O_{(n)}),\:\sigma_x > \sigma_{(n-1)}\right\},\\
R_2&:=&\left\{x:\:x < O_{(n)},\: D_{(n)} \le \sigma_x < \sigma_{(n-1)}\right\} .
\end{eqnarray*}
Note that here we are assuming $L$ is continuous, by Assumption \ref{assumpt:sosv}. Hence, to show that $|\Gamma_t|\leq N$ for large $t$ almost-surely, it will suffice to show that $|R_1| + |R_2| \leq N-2$ for large $n$ almost-surely. We begin by studying the cardinality of~$R_1$.

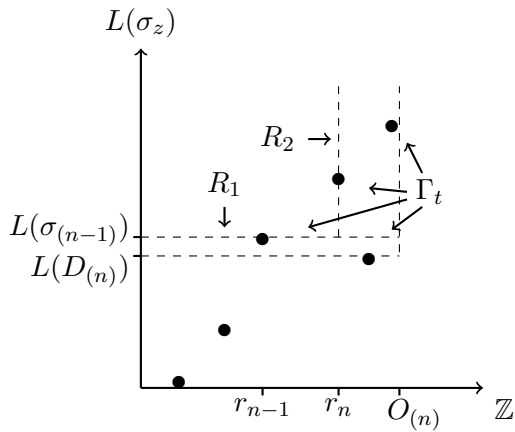
\begin{figure}[t]
\begin{tikzpicture}
\draw[thick,->] (0,0)   -- (4.5,0) node[anchor=north west] {\small $\mathbb{Z}$};
\draw[thick,->] (0,0)   -- (0,4.5) node[anchor=south] {\small $L(\sigma_z)$};
\draw[thick] (0,1.75) -- (-0.1,1.75);
\draw[thick] (0,2) -- (-0.1,2);
\draw[thick] (1.6,0) -- (1.6,-0.1);
\draw[thick] (2.6,0) -- (2.6,-0.1);
\draw[thick] (3.4,0) -- (3.4,-0.1);
\draw[dashed] (3.4, 1.8) -- (3.4, 4);
\draw[dashed] (0, 1.75) -- (3.4, 1.75);
\draw[dashed] (0, 2) -- (3.4, 2);
\draw[dashed] (2.6, 2) -- (2.6, 4);
\foreach \Point in {(0.5,0.06), (1.1,0.76), (1.6,1.96), (2.6,2.76), (3.3,3.46)}{
    \node at \Point {\textbullet};
}
\draw (3, 1.7) node {\textbullet};
\draw[thick,->] (2.2, 3.3) -- (2.5, 3.3) ;
\draw[thick,->] (1.1, 2.4) -- (1.1, 2.13) ;
\draw (3.8, 2.3) node[anchor=south] {\small $\Gamma_t$};
\draw[thick,->] (3.7, 2.85) -- (3.5, 3.25) ;
\draw[thick,->] (3.5, 2.6) -- (3, 2.65) ;
\draw[thick,->] (3.5, 2.5) -- (2.2, 2.13) ;
\draw[thick,->] (3.7, 2.4) -- (3.3, 2.1) ;
\draw (1.1, 2.4) node[anchor=south] {\small $R_1$};
\draw (1.8, 3) node[anchor=south] {\small $R_2$};
\draw (1.6, 0) node[anchor=north] {\small $r_{n-1}$} ;
\draw (2.6, 0) node[anchor=north] {\small $r_{n}$} ;
\draw (3.6, 0) node[anchor=north] {\small $O_{(n)}$} ;
\draw (0,1.55) node[anchor=east] {\small $L(D_{(n)})$};
\draw (0,2.1) node[anchor=east] {\small $L(\sigma_{(n-1)})$};
\end{tikzpicture}
\caption{Typical configurations of the sets $R_1$ and $R_2$. Depicted as well are the records $r_{n-1}$ and $r_n$, the boundaries $O_{(n)}$ and $D_{(n)}$ and the localisation set $\Gamma_t$.} \label{gtpic2}
\end{figure}

\begin{lemma}
\label{r1lem}
Suppose Assumption \ref{assumpt:sosv} holds. Then for each $k \in \mathbb{N}$ and $\varepsilon > 0$ there exists a constant $c >0$ and a sequence $(a_n)_{n\geq 1}$ satisfying $\sum_{n \in \mathbb{N}} a_n < \infty$ such that, as $n \to \infty$,
\[\mathbf{P}(|R_1|\geq k) < a_n + c \left( d(e^{n(1-\varepsilon)})  \hat{h}_n^2 \right)^k \]
eventually. In particular, $|R_1|\leq N-2$ eventually almost-surely.
\end{lemma}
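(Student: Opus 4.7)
The plan is to define a good event $G_n$ on which all relevant quantities are controlled, so that $a_n := \mathbf{P}(G_n^c)$ is summable, and then to bound $\mathbf{P}(|R_1| \geq k, G_n)$ via a careful decomposition of $R_1$ according to the chain structure. I would set $G_n$ to be the intersection of (i) $\log L(\sigma_{(n-1)}) \in (n(1-\varepsilon), n(1+\varepsilon))$ and $r_n \leq 2L(\sigma_{(n-1)}) \log n$ (from Lemma~\ref{lem:probrecord}), (ii) $S_{(n)}^- \leq (N-1+\varepsilon)\sigma_{(n-1)}$ (from Lemma~\ref{anlem}), and (iii) $h_{t_n} \leq C \hat{h}_n$, which follows from (i)--(ii) by using $t_n = h_{t_n} S_{(n)}^- (r_n-1)$ together with the slow variation of $h$. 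Summability of $a_n$ is then immediate. On $G_n$, the chaining step lengths satisfy $K_n^{(i)} = h_{t_n}\max\{t_n/\sigma_{y^i},1\} \leq C\hat{h}_n^2 L(\sigma_{(n-1)}) (\log n) \, \sigma_{(n-1)}/\sigma_{y^i}$, and in particular $y^2 = r_n$.

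Next I would write $|R_1| = M + V$, where $M$ counts the later records $r_{n+j}$ ($j \geq 1$) in $R_1$ (and hence in the chain, requiring successive extensions $r_{n+j} - r_{n+j-1} \leq K_n^{(j+1)}$), while $V$ counts the non-chain sites $x$ with $\sigma_x \in (\sigma_{(n-1)}, \sigma_{y^i}]$ lying in some chain segment. A union bound over $(m,v)$ with $m+v \geq k$ then reduces the problem to showing $\mathbf{P}(M \geq m, V \geq v, G_n) \leq C(d(e^{n(1-\varepsilon)}) \hat{h}_n^2 \log n)^{m+v}$, after which the $\log n$ factor is absorbed into the powers of $\hat{h}_n$ using the slack provided by \eqref{eq:h4}.

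For $\mathbf{P}(M \geq m, G_n)$, I would apply Markov's inequality to each geometric gap $r_{n+j} - r_{n+j-1}$ (which has conditional mean $L(\sigma_{(n+j-1)})$), giving per-extension probability $\lesssim \hat{h}_n^2 (\log n) \, \sigma_{(n-1)}L(\sigma_{(n-1)})/(\sigma_{(n+j-1)}L(\sigma_{(n+j-1)}))$, and then iteratively take expectations via Lemma~\ref{momentcomparison}, whose $j$-fold iterated bound $\mathbf{E}[\sigma_{(n-1)}/\sigma_{(n+j-1)} \mid \sigma_{(n-1)}] \leq (c \, g(\sigma_{(n-1)}))^{j}$ (combined with $g(\sigma_{(n-1)}) \leq d(e^{n(1-\varepsilon)})$ on $G_n$, by eventual monotonicity of $g$) produces the required factor $(d \hat{h}_n^2 \log n)^m$. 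For $V$, the crucial input is second-order slow variation: on a segment of length $L_j \leq K_n^{(j+1)}$, the per-site near-record probability is $q_j = 1/L(\sigma_{(n-1)}) - 1/L(\sigma_{y^i}) \lesssim g(\sigma_{(n-1)}) \log(\sigma_{y^i}/\sigma_{(n-1)})/L(\sigma_{y^i})$, contributing an extra factor of $g$ per near-record beyond the factor coming from $L_j/L(\sigma_{(n-1)})$; applying Markov to the $v$-th moment and again iterating Lemma~\ref{momentcomparison} yields the factor $(d\hat{h}_n^2 \log n)^v$. The ``in particular'' claim then follows by Borel--Cantelli with $k = N-1$, since $\sum_n (d(e^{n(1-\varepsilon)}) \hat{h}_n^2)^{N-1} < \infty$ is guaranteed by \eqref{eq:h4} (after a harmless rescaling $n \mapsto n(1-\varepsilon)$).

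The main technical obstacle will be the joint control of $(M,V)$: although traps beyond $r_n$ are unconditionally i.i.d., conditioning on the chain records $(\sigma_{(n)}, \ldots, \sigma_{(n+m)})$ induces correlations between chain segments, and the nested integrations must be performed in the right order so that the $g$-factors coming from near-record probabilities and from chain extensions combine correctly to the full $g^{m+v}$ scaling, rather than only the naive $g^1$ one obtains from a single application of Lemma~\ref{lem:ex} to the expected range $\mathbf{E}[O_{(n)} - r_n]$.
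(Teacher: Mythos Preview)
Your decomposition $|R_1| = M + V$ is more elaborate than the paper's route, and the $V$-part contains a genuine gap. Your claim is that the per-site near-record probability
\[
q_j = \frac{1}{L(\sigma_{(n-1)})} - \frac{1}{L(\sigma_{y^i})} \lesssim \frac{g(\sigma_{(n-1)})\log(\sigma_{y^i}/\sigma_{(n-1)})}{L(\sigma_{y^i})}
\]
yields an ``extra factor of $g$ per near-record''. But this second-order asymptotic is only valid for \emph{bounded} ratios $\sigma_{y^i}/\sigma_{(n-1)}$, whereas in the present setting the ratio of successive records is typically enormous: since $\log L(\sigma_{(n)}) - \log L(\sigma_{(n-1)})$ is exponential with mean one (Lemma~\ref{lem:probrecord}), one has $L(\sigma_{(n-1)})/L(\sigma_{(n)}) \approx e^{-1}$, so $q_j L(\sigma_{(n-1)})$ is of order $1$, not of order $g$. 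Equivalently, in the benchmark example $L(u)=\exp((\log u)^\gamma)$ one computes $\log(\sigma_{(n)}/\sigma_{(n-1)}) \sim \gamma^{-1}n^{1/\gamma-1}$ while $g(\sigma_{(n-1)}) \sim \gamma n^{1-1/\gamma}$, and the product is $O(1)$. Thus your bound for $v$ near-records in a single chain segment gives only a single factor of $g$ (coming from the segment length $\propto 1/\sigma_{(n)}$ via $\mathbf{E}[\sigma_{(n)}^{-v}\mid\sigma_{(n-1)}]\sim c_v\,g(\sigma_{(n-1)})/\sigma_{(n-1)}^v$, cf.\ Proposition~\ref{prop:sosv}), not the $g^v$ you need.

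The paper sidesteps this entirely by \emph{not} separating records from near-records. Writing $\xi_0=r_n$ and $\xi_1<\xi_2<\dots$ for the successive exceedances of $\sigma_{(n-1)}$ after $r_n$, one checks that $|R_1|\ge k$ forces $\xi_j-\xi_{j-1}\le h_{(n)}^2\max\{r_nS_{(n)}^-/\sigma_{\xi_{j-1}},1\}$ for $j=1,\dots,k$: the last chain site $y\le\xi_{j-1}$ has $\sigma_y\ge\sigma_{\xi_{j-1}}$, so the actual chain window from $y$ is dominated by this ``virtual'' window from $\xi_{j-1}$. Under the conditioning on $(\sigma_{(n-1)},r_n,S_{(n)}^-)$ the heights $\sigma_{\xi_j}$ are i.i.d.\ with the law of $\sigma_0\mid\sigma_0>\sigma_{(n-1)}$, so each factor integrates (via Lemma~\ref{lem:ex}) to produce a separate $g(\sigma_{(n-1)})$, yielding the product bound~(\ref{condestr1}) directly. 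The remainder is then just restricting to the event $\mathcal{B}_n$ of~(\ref{bnest}) and applying the moment bound of Lemma~\ref{lem:record}. This is both shorter and avoids the obstacle above; your $M/V$ split and the appeal to Lemma~\ref{momentcomparison} are unnecessary here.
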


\begin{proof}
By definition, the chaining window from $r_n$ is given by
\[  h_{(n)}\max \left\{  \frac{ t_n }{\sigma_{(n)} } , 1 \right \}  \le h_{(n)}^2\max \left\{ \frac{r_n S_{(n)}^-}{\sigma_{(n)}}, 1 \right\} .\]
Thus, by the union bound we find that
\[\mathbf{P}\left(|R_1|\geq 1|\: \sigma_{(n-1)}, \sigma_{(n)},r_n,S_{(n)}^-\right)\leq\frac{h^2_{(n)}}{L(\sigma_{(n-1)})}\max \left\{  \frac{r_n S_{(n)}^-}{\sigma_{(n)}},1\right\}.\]
Similarly, to bound the probability of there being at least $k$ sites in this region given that there is at least one, we can condition on the height of the first site, which is an i.i.d. copy of $\sigma_{(n)}$, and repeat the process. In particular,
\[\mathbf{P}\left(|R_1|\geq k|\: \sigma_{(n-1)},r_n,S_{(n)}^-\right)\leq \mathbf{E}\left(\prod_{i = 1, \ldots , k}\frac{h^2_{(n)}}{L(\sigma_{(n-1)})}\max \left\{  \frac{r_n S_{(n)}^-}{\sigma^i_{(n)}},1\right\} \bigg|\: \sigma_{(n-1)},r_n,S_{(n)}^-\right),
\]
where, under the conditioned law, $(\sigma^i_{(n)})_{i\geq 2}$ are i.i.d.\ copies of $\sigma_{(n)}$ (i.e. first exceedences of the level $\sigma_{(n-1)}$). Applying the independence and integrating out the $\sigma_{(n)}$ using Lemma~\ref{lem:ex}, it follows that, for some $c > 0$, as $\sigma_{(n-1)} \to \infty$ eventually
\begin{eqnarray}
\mathbf{P}\left(|R_1|\geq k|\: \sigma_{(n-1)},r_n,S_{(n)}^-\right)&\leq& \frac{(h_{(n)})^{2k}}{L(\sigma_{(n-1)})^k}\mathbf{E}\left(  \frac{r_n S_{(n)}^-}{\sigma_{(n)}} + 1 \bigg|\: \sigma_{(n-1)},r_n,S_{(n)}^-\right)^k\nonumber\\
& <  &
 \frac{(h_{(n)})^{2k}}{L(\sigma_{(n-1)})^k}  \left(\frac{ c \, r_n S_{(n)}^-   \, g(\sigma_{(n-1)})  }{\sigma_{(n-1)}}+1\right)^k\nonumber\\
& <  &  \frac{(2c)^{k} \, (h_{(n)})^{2k}}{L(\sigma_{(n-1)})^k}  \left(
\left(\frac{r_n S_{(n)}^- g(\sigma_{(n-1)})}{\sigma_{(n-1)}}\right)^k+1\right) .\label{condestr1}
\end{eqnarray}
Define the event
 \begin{equation}\label{bnest}
 \mathcal{B}_n := \left\{ \log L( \sigma_{(n-1)}) \in (1- \varepsilon, 1+\varepsilon)n \, , \ r_n < 2  L(\sigma_{(n-1)}) \log n \,  , \ S_{(n)}^- < N \sigma_{(n-1)} \right \} ,
 \end{equation}
and recall that, by Lemmas \ref{lem:probrecord} and \ref{anlem} and the union bound, this event satisfies $\mathbf{P}(\mathcal{B}^c_n) < a_n$ for some sequence satisfying $\sum_n a_n < \infty$. Further, by the definition of $\hat{h}_n$, note that on the event $\mathcal{B}_n$ we have $h_{(n)} < \hat{h}_n$. Consequently, by the choice of $\hat{h}_n$ in \eqref{eq:h4}, we have that, for some constant $c_1>0$, as $n \to \infty$ eventually
\[\mathbf{P}(|R_1|\geq k) < a_n + c_1 \, \left( d( e^{n(1-\varepsilon)}) \hat{h}_n^2 \right)^k \,
 \mathbf{E}\left(\left(\frac{r_n}{L(\sigma_{(n-1)})}\right)^k\right) . \]
Applying the moment estimate of Lemma \ref{lem:record}, this provides the probability estimate. The almost-sure claim follows by considering the definition of $N$ and the properties of $\hat{h}_n$ in~\eqref{eq:h4}, and then by applying a Borel-Cantelli argument.
\end{proof}

We now include into the analysis the set $R_2$. To increase our ability to exploit independence, we shall initially substitute the set $R_2$ for different set $\tilde{R}_2$, which contains $R_2$ with high probability but will be simpler to work with. To this end, recall the notation $S_t^\ell$ used to define the lower boundary $D_t$, and define
\[  \tilde{D}_{(n)} := \max \{ \ell \ge 0: S^\ell_{r_n \hat{h}^3_{n}} < \sigma_{(n-1)} / \hat{h}_{n} \} \]
and
\begin{eqnarray*}
\tilde{R}_2&:=&\left\{x:\:x\in(0, r_n \, \hat{h}^3_{n}),\: \tilde{D}_{(n)} \le \sigma_x < \sigma_{(n-1)} \right\} .
\end{eqnarray*}

Before analysing the cardinality of $\tilde{R}_2$, we make the link between the sets $R_2$ and $\tilde{R}_2$.

\begin{lemma}
\label{lem:r2r2} Suppose Assumption \ref{assumpt:sosv} holds. Then we have that
\[ \sum_{n \in \mathbb{N}} \mathbf{P}( R_2 \nsubseteq \tilde{R}_2 )  < \infty . \]
In particular, $R_2 \subseteq \tilde{R}_2$ eventually almost-surely.
\end{lemma}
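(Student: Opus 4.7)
The plan is a Borel--Cantelli argument: I will show $\sum_n \mathbf{P}(R_2 \not\subseteq \tilde{R}_2) < \infty$ by first comparing $R_2$ and $\tilde{R}_2$ deterministically on a favourable event, then bounding the probability of the complementary event via Lemma \ref{r1lem}.

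First, I would establish the following deterministic inclusion: on the event $\{h_{(n)} \le \hat{h}_n,\, O_{(n)} \le r_n \hat{h}_n^3\}$, one has $R_2 \subseteq \tilde{R}_2$. The index constraint is immediate. For the height constraint, it suffices to show $\tilde{D}_{(n)} \le D_{(n)}$: if $\ell > D_{(n)}$, then by the maximality in the definition of $D_{(n)}$, $S^\ell_{O_{(n)}} \ge \sigma_{(n-1)}/h_{(n)}$, and using that $S^\ell_i$ is non-decreasing in $i$ together with $O_{(n)} \le r_n \hat{h}_n^3$ and $h_{(n)} \le \hat{h}_n$ yields $S^\ell_{r_n \hat{h}_n^3} \ge \sigma_{(n-1)}/\hat{h}_n$, hence $\ell > \tilde{D}_{(n)}$. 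Using that $h_{(n)} < \hat{h}_n$ on $\mathcal{B}_n$ (as noted in the proof of Lemma \ref{r1lem}) and that $\sum_n \mathbf{P}(\mathcal{B}_n^c) < \infty$, it thus suffices to show $\sum_n \mathbf{P}(O_{(n)} > r_n \hat{h}_n^3,\, \mathcal{B}_n) < \infty$.

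Second, I would bound $O_{(n)}$ on $\mathcal{B}_n$. The chaining procedure starting at $r_{n-1}$ at time $t_n$ produces sites $y^1 = r_{n-1} < y^2 < \dots < y^m$, which a short induction confirms are all records of $\sigma$ (each $y^{i+1}$ is a running maximum through $y^i$, and all intervening sites have $\sigma$-value at most $\sigma_{y^i}$); hence $y^2 = r_n$ and $y^3, \ldots, y^m \in R_1$, so $m \le |R_1| + 2$. Moreover, since $\sigma_{y^i} \ge \sigma_{(n-1)}$ along the chain, each step advances by at most $h_{(n)} \max\{t_n/\sigma_{(n-1)}, 1\}$, which on $\mathcal{B}_n$ is bounded by $N \hat{h}_n^2 r_n$ using $S_{(n)}^- < N\sigma_{(n-1)}$ and $t_n = h_{(n)} S_{(n)}^-(r_n - 1)$. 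Hence $O_{(n)} \le r_n(1 + N(|R_1|+2)\hat{h}_n^2)$ on $\mathcal{B}_n$, and for $n$ large enough that $\hat{h}_n \ge N(N+3)$ the event $\{O_{(n)} > r_n \hat{h}_n^3\} \cap \mathcal{B}_n$ forces $|R_1| \ge N$.

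Finally, Lemma \ref{r1lem} with $k = N$ yields $\mathbf{P}(|R_1| \ge N) < a_n + c(d(e^{n(1-\varepsilon)})\hat{h}_n^2)^N$, and summability of this bound follows because, for $N \ge 2$, the comparison $(d(e^n)\hat{h}_n^2)^N = o((d(e^n) \log n \, \hat{h}_n^5)^{N-1})$ reduces the claim to the second part of \eqref{eq:h4}; Borel--Cantelli then completes the proof. The main obstacle I anticipate is the first step, where one has to observe that the two changes in the definition of $\tilde{R}_2$ relative to $R_2$ (larger index range, smaller threshold) combine consistently to push $\tilde{D}_{(n)}$ below $D_{(n)}$—once that is done, the chaining calculation in the second step is a routine combinatorial estimate given Lemma \ref{r1lem}.
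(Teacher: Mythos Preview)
Your proof is correct and follows essentially the same approach as the paper: both establish that on a favourable event $\mathcal{B}_n$ (combining $S_{(n)}^- < N\sigma_{(n-1)}$, $h_{(n)} \le \hat{h}_n$, and a bound on $|R_1|$) one has $O_{(n)} \le r_n \hat{h}_n^3$, which forces $\tilde{D}_{(n)} \le D_{(n)}$ and hence $R_2 \subseteq \tilde{R}_2$, and then invoke Lemmas \ref{anlem} and \ref{r1lem} for summability of $\mathbf{P}(\mathcal{B}_n^c)$. Your version is somewhat more detailed in that you explicitly carry out the chaining bound (showing each $y^i$ is a record, counting steps via $|R_1|$, and bounding increments), whereas the paper simply asserts the conclusion ``by considering the chaining that defines the outer boundary''; the paper also packages $|R_1| < N$ directly into its event rather than deriving it from $O_{(n)} > r_n\hat{h}_n^3$ as you do, but this is a cosmetic difference.
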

\begin{proof}
Denote by $\mathcal{B}_n$ the event
\[ \mathcal{B}_n := \left\{ S_{(n)}^- < N \sigma_{(n-1)} , \:   |R_1| < N  ,\:h_{(n)}\leq\hat{h}_n \right\} .  \]
By considering the chaining that defines the outer boundary $O_t$, we observe that on $\mathcal{B}_n$ we know that $O_t <   r_n \hat{h}_n^3$. Given the respective definitions of $\tilde{D}_{(n)}$ and $D_{(n)}$, this in turn implies $\tilde{D}_{(n)} \le   D_{(n)}$ and hence $R_2 \subseteq \tilde{R}_2$. Hence we infer that
\[ \sum_{n \in \mathbb{N}} \mathbf{P}( R_2 \nsubseteq \tilde{R}_2 ) \le \sum_{n \in \mathbb{N}} \mathbf{P}(\mathcal{B}^c_n ),\]
and the result follows by Lemmas \ref{anlem} and \ref{r1lem} (as well as the observation from the proof of the latter lemma that $h_{(n)}\leq\hat{h}_n$ holds on an event of probability greater than $1-cn^2$).
\end{proof}

To complete our analysis of the cardinality of $\Gamma_t$ we bound $\mathbf{P}(|R_1| + |\tilde{R}_2| \ge N-1)$, from where the fact that $|\Gamma_t| \le N$ eventually almost-surely follows by a Borel-Cantelli argument.

\begin{lemma}
\label{r2lem} Suppose Assumption \ref{assumpt:sosv} holds. We have that
\[  \sum_{n \in \mathbb{N}} \mathbf{P}(|R_1| + |\tilde{R}_2|  \geq N-1) < \infty.  \]
In particular, $|R_1| + |\tilde{R}_2| \leq N-2$ eventually almost-surely.
\end{lemma}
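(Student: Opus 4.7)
The plan is to decompose on the value of $|R_1|$, working on a good event $\mathcal{B}_n$ obtained by augmenting \eqref{bnest} with the condition $h_{(n)} \le \hat{h}_n$, whose complement is summable by Lemmas \ref{lem:probrecord}, \ref{anlem} and the proof of Lemma \ref{r1lem}. Writing
\[ \mathbf{P}(|R_1| + |\tilde{R}_2| \ge N-1,\,\mathcal{B}_n) \le \sum_{k_1 + k_2 = N-1} \mathbf{P}(|R_1| \ge k_1,\,|\tilde{R}_2| \ge k_2,\,\mathcal{B}_n), \]
it suffices to bound each summand by a product of estimates for the two factors.

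The key structural observation for $|\tilde{R}_2|$ is that, directly from the definition of $\tilde{D}_{(n)}$, the event $\{|\tilde{R}_2| \ge k_2\}$ coincides with the event that the tail sum $S^{(k_2)}_M$ (in the notation of Section \ref{subsec:prelim}) of the $k_2$-th largest and smaller values, among the $M$ traps in the window $(0,\,r_n \hat{h}_n^3)$ with values strictly less than $\sigma_{(n-1)}$, exceeds $\sigma_{(n-1)}/\hat{h}_n$. Conditional on the record structure, these traps are stochastically dominated by $r_n \hat{h}_n^3$ i.i.d.\ copies of $\sigma_0$ conditioned to be below $\sigma_{(n-1)}$, so applying Corollary \ref{cor:sumlevel} with $i = r_n \hat{h}_n^3$, $\ell_i = \sigma_{(n-1)}$ and $\delta_i = 1/\hat{h}_n$, then using $r_n < 2L(\sigma_{(n-1)})\log n$ and $g(\sigma_{(n-1)}) \le d(e^{n(1-\varepsilon)})$ on $\mathcal{B}_n$, yields
\[ \mathbf{P}(|\tilde{R}_2| \ge k_2 \mid \mathcal{F}_n) < c\,\hat{h}_n\bigl(d(e^{n(1-\varepsilon)})\,\hat{h}_n^3 \log n\bigr)^{k_2}. \]

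To combine this with the bound $c(d(e^{n(1-\varepsilon)})\hat{h}_n^2)^{k_1}$ for $|R_1| \ge k_1$ (essentially the content of Lemma \ref{r1lem}, after integrating out $r_n$ via Lemma \ref{lem:record}), I would use a conditional independence argument: decomposing each trap as $\sigma_x = B_x V_x^+ + (1-B_x) V_x^-$ with $B_x := \mathbf{1}\{\sigma_x > \sigma_{(n-1)}\}$, the quantity $|R_1|$ depends only on the $V_x^+$'s while $|\tilde{R}_2|$ depends only on the $V_x^-$'s and the non-record traps to the left of $r_n$, and these two families are conditionally independent given the Bernoullis $\{B_x\}$ and the record structure. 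This yields
\[ \mathbf{P}(|R_1| \ge k_1,\,|\tilde{R}_2| \ge k_2,\,\mathcal{B}_n) < c\,d(e^{n(1-\varepsilon)})^{N-1}(\log n)^{k_2}\,\hat{h}_n^{1 + 2k_1 + 3k_2}, \]
which, summed over the $N$ splits with $k_1 + k_2 = N-1$, is dominated by $c\bigl(d(e^{n(1-\varepsilon)})(\log n)\hat{h}_n^5\bigr)^{N-1}$. Summability in $n$ then follows from the choice of $\hat{h}_n$ in \eqref{eq:h4} (noting that $3N-2 \le 5(N-1)$ for $N \ge 2$), and Borel-Cantelli delivers the almost-sure claim.

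The main obstacle is the conditional independence step. In particular, $|R_1|$ is defined via the random chaining boundary $O_{(n)}$, so before the Bernoulli decomposition can cleanly uncouple the two counts one must first verify that $R_1 \subseteq (r_n,\,r_n \hat{h}_n^3)$ on $\mathcal{B}_n$, so that both $R_1$ and $\tilde{R}_2$ sit in the common window $(0,\,r_n \hat{h}_n^3)$ which is determined by $\mathcal{F}_n$; this follows from the chaining construction exactly as in the proof of Lemma \ref{lem:r2r2}, but must be woven into the argument before independence can be exploited.
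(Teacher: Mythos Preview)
Your overall architecture---decomposing on $k_1+k_2=N-1$, working on the good event $\mathcal{B}_n$, and reducing the $|\tilde R_2|$ count to a tail-sum bound---matches the paper's route closely. The conditional independence device (splitting each trap into its $B_x$, $V_x^+$, $V_x^-$ components) is a clean substitute for the paper's conditioning on $\{(x,\sigma_x):\sigma_x\geq\sigma_{(n-1)}\}$, and the observation that $\{|\tilde R_2|\geq k_2\}$ is contained in a tail-sum event for the traps below $\sigma_{(n-1)}$ is exactly the starting point the paper uses.

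There is, however, a genuine gap in your bound for $\mathbf{P}(|\tilde R_2|\geq k_2\mid\mathcal{F}_n)$. The window $(0,r_n\hat h_n^3)$ contains not only the non-record traps (which, conditionally, are indeed dominated by i.i.d.\ copies of $\sigma_0\mid\sigma_0<\sigma_{(n-1)}$) but also the earlier records $\sigma_{(1)},\dots,\sigma_{(n-2)}$. Conditional on the record structure these are \emph{fixed}, not random, so they are not covered by your stochastic domination, and Corollary~\ref{cor:sumlevel} does not apply to them. This is not a technicality: the event that several of the $\sigma_{(j)}$ cluster near $\sigma_{(n-1)}$ is precisely one of the mechanisms producing large $|\tilde R_2|$, and its probability is governed by powers of $d(e^{n(1-\varepsilon)})$ through Lemma~\ref{sumsofrecords}, not through the i.i.d.\ tail-sum estimate. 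The paper handles this by splitting the sub-$\sigma_{(n-1)}$ sum as
\[
\sum_{i=1}^{n-2}\sigma_{(i)}\;+\;\sum_{i<r_n\hat h_n^3}\sigma_i\mathbf{1}_{\{\sigma_i<\sigma_{(n-1)},\,i\notin\mathcal{R}\}},
\]
and then following the second half of the proof of Lemma~\ref{anlem}: for each way of allocating the $k_2-1$ excluded terms between the record part and the non-record part, the record contribution is bounded via Lemma~\ref{sumsofrecords} and the non-record contribution via Corollary~\ref{cor:sumlevel} (the function $F$ of \eqref{Fdef}). The combined estimate has the same order as the one you wrote down, so your final summability conclusion survives, but the intermediate step as you have it does not go through without this records/non-records decomposition.
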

\begin{proof}
First observe that
\begin{align}
\mathbf{P}(|R_1| + &  |\tilde{R}_2| \geq N-1) \nonumber\\
& = \sum_{k = 0}^{N-1} \mathbf{E} \left[ \mathbf{1}_{\{|R_1| \ge k \}} \mathbf{P}(|\tilde{R}_2| \ge N-1-k \:|\: \sigma_{(n-1)}, r_{n-1}, \{(x,\sigma_x):\:\sigma_x\geq \sigma_{(n-1)}\})  \right] .\label{bound}
\end{align}
To control the conditional probability in (\ref{bound}), note that $|\tilde{R}_2| \ge N-1-k$ implies that if we exclude the largest $N-2-k$ terms from sum
\[\sum_{i=1}^{r_n\hat{h}_n^3}\sigma_i\mathbf{1}_{\{\sigma_i< \sigma_{(n-1)}\}}=
\sum_{i=1}^{n-2}\sigma_{(i)}+\sum_{i=1}^{r_n\hat{h}_n^3}\sigma_i\mathbf{1}_{\{\sigma_i< \sigma_{(n-1)},\:\sigma_i\not\in\mathcal{R}\}},\]
then the result is still greater than $\sigma_{(n-1)}/\hat{h}_n$. Hence, by following the same argument as in the proof of the second part of Lemma \ref{anlem} we have that
\begin{eqnarray*}
\lefteqn{\mathbf{P}(|\tilde{R}_2| \ge N-1-k \:|\: \sigma_{(n-1)}, r_{n-1}, \{(x,\sigma_x):\:\sigma_x\geq \sigma_{(n-1)}\})}\\
&\leq &\mathbf{P} \left(\sum_{i=1}^{(n-2)-(N-k-2)}\sigma_{(i)}>\hat{h}_n^{-1}\sigma_{(n-1)} \:\vline\:\sigma_{(n-1)}\right)\\
&&+\sum_{l=1}^{N-k-2}\mathbf{P}\left(\sum_{i=1}^{(n-2)-(N-k-2-l)}\sigma_{(i)}\geq \hat{h}_n^{-1}\sigma_{(n-1)} \:\vline\:\sigma_{(n-1)}\right) F \left(r_n\hat{h}_n^3,\sigma_{(n-1)},l,\hat{h}_n^{-1}  \right).
\end{eqnarray*}
Note that, in contrast to the proof of Lemma \ref{anlem}, we have also included the traps in $(\sigma_{(n)}, r_n \hat{h}_{n}^3)$ into the analysis, but this causes no problem since under the relevant conditioning the terms $\sigma_i\mathbf{1}_{\{\sigma_i< \sigma_{(n-1)},\:\sigma_i\not\in\mathcal{R}\}}$ are either identically zero, or have the distribution $\sigma_0|\sigma_0<\sigma_{(n-1)}$. Now, conditioning on the event $\mathcal{B}_n$ defined at (\ref{bnest}) (whose probability is estimated in Lemmas \ref{lem:probrecord} and \ref{anlem}), applying the conditional estimate for the tail of $|R_1|$ of (\ref{condestr1}), and the estimate for $F$ from Corollary \ref{cor:sumlevel}, one deduces that
\begin{eqnarray*}
\lefteqn{\mathbf{P}(|R_1| +  |\tilde{R}_2| \geq N-1) }\\
& \leq & a_n +c \sum_{k = 0}^{N-1} (d(e^{n(1-\varepsilon)})\hat{h}_n^2)^k \times\left[\mathbf{P} \left(\sum_{i=1}^{(n-2)-(N-k-2)}\sigma_{(i)}>\hat{h}_n^{-1}\sigma_{(n-1)} \right)\right.\\
&&+\left.\sum_{l=1}^{N-k-2}\mathbf{P}\left(\sum_{i=1}^{(n-2)-(N-k-2-l)}\sigma_{(i)}\geq \hat{h}_n^{-1}\sigma_{(n-1)} \right)\hat{h}_n(d(e^{n(1-\varepsilon)})\hat{h}_n^3\log n)^l\right].
\end{eqnarray*}
for some sequence $(a_n)_{n\geq 1}$ with $\sum_n a_n < \infty$. To estimate the remaining probabilities, one can apply the argument of Lemma \ref{sumsofrecords}. The result is summable by the choice of $\hat{h}_n$ in \eqref{eq:h4}, which completes the proof of the first claim of the lemma. The almost-surely statement then follows from a Borel-Cantelli argument.
\end{proof}

From Lemmas  \ref{lem:r2r2} and \ref{r2lem}, we have the following corollary.

\begin{corollary}
\label{cor:N} Suppose Assumption \ref{assumpt:sosv} holds. We have that
\[ \sum_{n \in \mathbb{N}} \mathbf{P}( |R_1| + |\tilde{R}_2| \ge N-1 \mbox{ or } \ R_2 \nsubseteq \tilde{R}_2  )  < \infty \, .\]
In particular, $|R_1| + |{R}_2| \leq N-2$ eventually almost-surely.
\end{corollary}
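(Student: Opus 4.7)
The plan is a straightforward combination of the two preceding lemmas via a union bound, followed by a Borel--Cantelli argument. First I would observe that, by subadditivity of probability,
\[ \mathbf{P}\bigl( |R_1| + |\tilde{R}_2| \ge N-1 \ \text{or} \ R_2 \nsubseteq \tilde{R}_2 \bigr) \le \mathbf{P}\bigl( |R_1| + |\tilde{R}_2| \ge N-1 \bigr) + \mathbf{P}\bigl( R_2 \nsubseteq \tilde{R}_2 \bigr). \]
Lemma \ref{r2lem} shows that the first summand is summable in $n$, and Lemma \ref{lem:r2r2} shows the same for the second. Adding the two summable series gives the first claim of the corollary.

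For the almost-sure consequence, I would note the set-theoretic fact that if $R_2 \subseteq \tilde{R}_2$, then $|R_2| \le |\tilde{R}_2|$, and hence
\[ \{|R_1| + |R_2| \ge N-1\} \subseteq \{|R_1| + |\tilde{R}_2| \ge N-1\} \cup \{R_2 \nsubseteq \tilde{R}_2\}. \]
Applying the first claim of the corollary together with the Borel--Cantelli lemma then yields that the event on the left-hand side occurs only finitely often $\mathbf{P}$-almost-surely, so $|R_1| + |R_2| \le N-2$ eventually almost-surely, as required.

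There is no real obstacle here: the corollary is a bookkeeping step that glues the probability estimates of Lemmas \ref{r2lem} and \ref{lem:r2r2} together, transferring the bound from the technically convenient set $\tilde{R}_2$ back to the original set $R_2$. All the genuine work---controlling the chaining window through $\hat{h}_n$, handling the conditional sum bounds, and invoking Assumption~\ref{assumpt:sosv} to extract the key factors $d(e^{n(1-\varepsilon)})^{N-1}$ that make the series converge---has already been carried out in the two preceding lemmas.
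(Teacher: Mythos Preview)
Your proposal is correct and matches the paper's approach exactly: the paper in fact gives no explicit proof, simply stating the corollary as an immediate consequence of Lemmas~\ref{lem:r2r2} and~\ref{r2lem}. Your write-up spells out the union bound, the set inclusion, and the Borel--Cantelli step that the authors leave implicit.
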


Recalling that, by construction, $\limsup_{t\rightarrow\infty}|\Gamma_t|\leq 2+\limsup_{n\rightarrow\infty}(|R_1| + |{R}_2|)$, the previous result (together with Proposition \ref{prop:loc}) completes the proof of the first claim of Theorem \ref{thm:main1}. To complete the section, we point out a second easy corollary of the above, which confirms one implication of Theorem \ref{thm:main3} holds.

\begin{corollary} Suppose Assumption \ref{assumpt:sosv} holds. Assume $N =2$. Then, as $t \to \infty$,
\[ \Gamma_t \subseteq \mathcal{R} \]
eventually almost-surely.
\end{corollary}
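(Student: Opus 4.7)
The plan is to deduce the corollary as an immediate consequence of Corollary \ref{cor:N} combined with the structural description of $\Gamma_t$ given at the start of Section \ref{subsec:card}. Recall that for $t \in [t_{n-1},t_n)$ we established the set containment
\[
\Gamma_t \subseteq \{r_{n-1}, r_n\} \cup R_1 \cup R_2,
\]
where the sets $R_1, R_2$ depend on $n$ but were precisely the non-record contributions: $R_1$ consisted of near-records past $r_n$ lying inside the outer boundary, and $R_2$ consisted of sub-record traps above the lower boundary $D_{(n)}$. The record sites $r_{n-1}$ and $r_n$ are of course elements of $\mathcal{R}$, so what needs to be shown is that the non-record part $R_1 \cup R_2$ is empty for all sufficiently large $n$, almost-surely.

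When $N = 2$, Corollary \ref{cor:N} (applied via its Borel--Cantelli consequence) yields $|R_1| + |R_2| \le N - 2 = 0$ eventually almost-surely; that is, both $R_1$ and $R_2$ are empty for all sufficiently large $n$. Consequently, for every sufficiently large $n$ and every $t \in [t_{n-1}, t_n)$, the containment above reduces to $\Gamma_t \subseteq \{r_{n-1}, r_n\} \subseteq \mathcal{R}$. Since the intervals $[t_{n-1},t_n)$ partition $[0,\infty)$ (with $t_n \to \infty$ almost-surely as $n \to \infty$, by Lemma \ref{lem:probrecord}), this gives $\Gamma_t \subseteq \mathcal{R}$ for all sufficiently large $t$, almost-surely.

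There is no real obstacle here: the corollary is essentially a book-keeping observation, since all of the quantitative work has already been done in Lemmas \ref{r1lem}, \ref{lem:r2r2} and \ref{r2lem}. The only thing to be careful about is the fact that the sequence $(t_n)$ almost-surely tends to infinity, which is needed to translate the eventually-in-$n$ statement into an eventually-in-$t$ statement. This is immediate from Lemma \ref{lem:probrecord}, which gives $r_n \to \infty$ almost-surely, together with the defining relation $t_n/h_{t_n} = S_{(n)}^-(r_n - 1) \ge \sigma_{(n-1)}(r_n - 1)$, ensuring $t_n \to \infty$.
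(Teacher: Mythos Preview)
Your proposal is correct and follows essentially the same route as the paper's own proof: use the containment $\Gamma_t \subseteq \{r_{n-1},r_n\}\cup R_1\cup R_2$ for $t\in[t_{n-1},t_n)$, apply Corollary \ref{cor:N} with $N=2$ to obtain $|R_1|+|R_2|\le 0$ eventually almost-surely, and conclude. Your additional remark that $t_n\to\infty$ (to convert eventually-in-$n$ to eventually-in-$t$) is a reasonable detail to make explicit, though the paper leaves it implicit.
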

\begin{proof} As noted above, $\Gamma_t\subseteq \{r_{n-1},r_n\}\cup R_1\cup R_2$ for $t\in[t_{n-1},t_n)$. However, we have
by Corollary \ref{cor:N} that $|R_1| + |{R}_2|\leq N-2=0 $ eventually almost-surely. Thus $\Gamma_t\subseteq \{r_{n-1},r_n\}\subseteq \mathcal{R}$ eventually almost-surely.
\end{proof}

\section{Most favoured site}
\label{sec:fav}

In this section we consider the most favoured site; that is, we complete the proof of Theorem~\ref{thm:main2}. We note that by the first part of Theorem \ref{thm:main1}, as was proved in the previous section, the most favoured site must have at least $1/N$ proportion of the probability mass at all sufficiently large times. Moreover, by Theorem \ref{thm:maincl}, also proved in the previous section, and combining with Fatou's lemma, one readily deduces that
\[  \limsup_{t \to \infty} \sup_{x \in \mathbb{Z}^+} P_\sigma(X_t = x) = 1 \, .  \]
Hence it is sufficient to show that there exist arbitrarily large times at which the probability mass of the BTM is evenly balanced across exactly $N$ sites. Proving this will also finish the proof of Theorem \ref{thm:main1}. Furthermore, to prove the converse direction of Theorem~\ref{thm:main3}, we just need to show that if $N \ge 3$ then additionally the $N$ sites referred to above are not all record traps.

We proceed in two steps. First, we establish the above `balanced localisation' result on the assumption that a certain favourable event $\mathcal{E}_{n}$ involving the trapping landscape holds infinitely often. Second, we prove that this favourable event does indeed hold infinitely often almost-surely; it is here that we will need to work under Assumption \ref{assumpt:g22}.

\subsection{Defining the favourable event}

In this section, we define the favourable event~$\mathcal{E}_n$. The definition of this involves a certain $n$-dependent collection of sites $(z_i)_{1 \le i \le N}$, (we drop the explicit dependence on $n$ for brevity). In particular, we fix $\varepsilon_0\in(0,1)$, define $z_1:=r_{n-1}$ and set, for $i=2,\dots, N$,
\[z_{i}=\min\left\{z>z_{i-1}:\:\sigma_z>(1-\varepsilon_0)\sigma_{(n-1)}\right\}.\]
We also introduce the notation
\[\Lambda_n:= L((1-\varepsilon_0)\sigma_{(n)}).\]
For $\varepsilon_1,\varepsilon_2,\varepsilon_3,\varepsilon_4,\varepsilon_5,\varepsilon_6,\varepsilon_7\in(0,1)$, satisfying $\varepsilon_1 < \varepsilon_2$, we then suppose $\mathcal{E}_n$ is defined to be the event
\begin{eqnarray*}
&&\left\{z_N=r_n,\:\frac{z_N-z_{N-1}}{\Lambda_{n-1}}\in(\varepsilon^{-1}_1,\varepsilon^{-1}_2),
\:\frac{z_{N-1}-z_1}{\Lambda_{n-1}}<\varepsilon_3^{-1},\:\frac{z_1}{\Lambda_{n-1}}<\varepsilon_4^{-1}\right\}\\
&&\cap\left\{\sum_{\substack{z<z_N:\\z\not\in\{z_1,\dots,z_{N-1}\}}}\sigma_z<\varepsilon_4\sigma_{(n-1)},\:\sigma_{(n)}>\varepsilon_5^{-1}\sigma_{(n-1)},
\:\sum_{z_N<z\leq z_N+\varepsilon_6^{-1}\Lambda_{n-1}}\sigma_z<\varepsilon_7\sigma_{(n-1)}
\right\}.
\end{eqnarray*}
Constraints on $(\varepsilon_i)_{i=0}^7$ will be imposed later as they become necessary for the argument. We will also allow these parameters to depend on $n$ where needed. (Actually we only need to do this for $\varepsilon_4$.) See Figure \ref{enpic} for a typical configuration on this event.

\begin{figure}[ht]
\begin{tikzpicture}
\draw[thick,->] (0,0)  -- (10,0) node[anchor=north west] {};
\draw[thick,->] (0,0)  -- (0,4.5) node[anchor=south] {\small $\sigma_z / \sigma_{(n-1)}$};
\draw[thick] (3, 0) -- (3, -0.1);
\draw (3, 0) node[anchor=north] {\small $z_1/ \Lambda_{n-1}$};
\draw[thick] (4.5, 0) -- (4.5, -0.1);
\draw (4.5, 0) node[anchor=north] {\small $z_2/ \Lambda_{n-1}$};
\draw[thick] (7.2, 0) -- (7.2, -0.1);
\draw (7.2, 0) node[anchor=north] {\small $z_{N-1}/ \Lambda_{n-1}$};
\draw[thick] (9, 0) -- (9, -0.1);
\draw (9, 0) node[anchor=north] {\small $z_N/ \Lambda_{n-1}$};
\draw[thick] (0, 0.4) -- (-0.1, 0.4);
\draw (0, 0.4) node[anchor=east] {\small $\min\{\varepsilon_4, \varepsilon_7\}$};
\draw[dashed] (0, 0.4) -- (10, 0.4);
\draw[thick] (0, 2) -- (-0.1, 2);
\draw (0, 2) node[anchor=east] {\small $1$};
\draw[thick] (0, 4) -- (-0.1, 4);
\draw (0, 4) node[anchor=east] {\small $\sigma_{(n)} / \sigma_{(n-1)}$};
\foreach \Point in {(1, 0.2), (5.3, 0.1), (9.6, 0.2), (3,2), (4.5,1.7), (5.2, 1.8), (6, 1.9), (7.2, 1.8), (9, 4)}{
    \node at \Point {\textbullet};
}
\draw[thick,<->] (0,2.5)  -- (1.5, 2.5) node[anchor=south] {\small $<  \varepsilon_4^{-1}$} -- (3, 2.5) ;
\draw[thick,<->] (3,3)  -- (5.1, 3) node[anchor=south] {\small $< \varepsilon_3^{-1}$} -- (7.2, 3) ;
\draw[thick,<->] (7.2,4.5)  -- (8.1, 4.5) node[anchor=south] {\small $\in ( \varepsilon_1^{-1}, \varepsilon_2^{-1})$} -- (9, 4.5) ;
\draw[thick,<->] (9, 0.6)  -- (9.6, 0.6) node[anchor=south] {\small $ \varepsilon_6^{-1}$} -- (10, 0.6) ;
\draw[thick,<->] (7.5,1.5)  -- (7.5, 1.75) node[anchor=west] {\small $< \varepsilon_0$} -- (7.5, 2) ;
\draw[thick,<->] (9.2,2)  -- (9.2, 3) node[anchor=west] {\small $> \varepsilon_5^{-1} $} -- (9.2, 4) ;
\end{tikzpicture}
\caption{A typical configuration on the favourable event $\mathcal{E}_n$, depicting the sites $\{z_i\}_{i=1}^7$ along with other records and near records of the sequence~$\sigma$.}\label{enpic}
\end{figure}
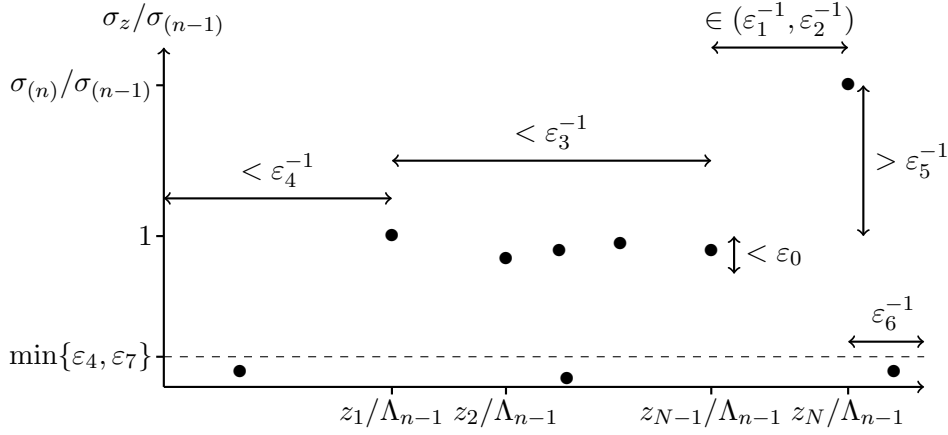

\subsection{Balanced localisation assuming the favourable event holds}

In this section, we prove the remaining part of Theorem \ref{thm:main2}, as well as the converse direction of Theorem~\ref{thm:main3}, under the following assumption.

\begin{assumption}
\label{assump}
The event $\mathcal{E}_n$ holds infinitely often almost-surely whenever  $(\varepsilon_i)_{i=0}^7$ satisfy $\varepsilon_1^{-1}<\varepsilon_2^{-1}$ and $\varepsilon_4=1/4\log n$.
\end{assumption}

To prove that balanced localisation occurs, the idea is to show that, under $\mathcal{E}_n$, the chain mixes quickly on $\{z_1,\dots,z_{N-1}\}$, meaning that on `short' time scales the mass is approximately evenly distributed on these sites, whereas on a suitably selected `long' time scale an appropriate amount of mass, approximately $1/N$, has seeped onto $z_N$. From this it is possible to conclude that, at this latter time, the process is approximately uniformly distributed over $\{z_1,\dots,z_{N}\}$.

We start by defining the long time scale. In particular, we choose $t_n$ to be the unique time such that
\[P_\sigma\left(\tau_{z_N}\leq t_n\right)=\frac{1}{N},\]
where $\tau_{x}$ is the hitting time of $x$ by $X$ (started from 0). (Note that for $x\geq 1$, $\tau_x$ has a continuous distribution with full support on $(0,\infty)$, and so $t_n$ is well-defined.) The following lemma is a ready consequence of this definition.

\begin{lemma}[Bounds on the long time scale]
\label{lem:tnbounds}
Suppose $\mathcal{E}_n$ holds, then
\[\frac{2(1-\varepsilon_0)}{\varepsilon_1N} <  \frac{t_n}{\Lambda_{(n-1)} \sigma_{(n-1)}} < 6 (\varepsilon_2^{-1}+\varepsilon_3^{-1})N^2  . \]
\end{lemma}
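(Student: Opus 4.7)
My plan is to prove both inequalities by direct application of the hitting-time bounds from Section \ref{sec:prelim}, exploiting the fact that the favourable event $\mathcal{E}_n$ has been designed to give matching control over $z_N-z_{N-1}$, over the total trap mass outside the near records $\{z_1,\dots,z_{N-1}\}$, and over the distance $z_N-z_1$. The first step is to note that the distribution of $\tau_{z_N}$ under $P_\sigma$ coincides with its distribution under $P_0^{0,z_N}$, the law of the inhomogeneous CTRW on $[0,z_N]\cap\mathbb{Z}$ reflected at both endpoints, since only the dynamics strictly before $\tau_{z_N}$ matter and the reflection at $z_N$ is immaterial; thus Propositions \ref{prop:hittingtimeub} and \ref{prop:hittingtimelb} apply directly.

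For the lower bound on $t_n$, I would apply the first statement of Proposition \ref{prop:hittingtimelb} with $x=0$, $b=z_N$ and $z=z_{N-1}$, giving
\[ P_\sigma(\tau_{z_N}\leq t) \leq \frac{t}{2(z_N-z_{N-1})\,\sigma_{z_{N-1}}}. \]
On $\mathcal{E}_n$, the definitions yield $\sigma_{z_{N-1}}>(1-\varepsilon_0)\sigma_{(n-1)}$ and, reading the interval so that $\varepsilon_1^{-1}$ is the lower endpoint as the form of the claim demands, $z_N-z_{N-1}>\Lambda_{n-1}/\varepsilon_1$. Setting the right-hand side equal to $1/N$ and solving for $t$ immediately yields $t_n>\tfrac{2(1-\varepsilon_0)}{\varepsilon_1 N}\Lambda_{n-1}\sigma_{(n-1)}$.

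For the upper bound, I would use Markov's inequality, $P_\sigma(\tau_{z_N}>t)\leq E_\sigma[\tau_{z_N}]/t$, which (by choosing $t=NE_\sigma[\tau_{z_N}]/(N-1)$) shows $t_n\leq NE_\sigma[\tau_{z_N}]/(N-1)$. The expectation is then estimated via the second statement of Proposition \ref{prop:hittingtimeub} with $S:=\{z_1,\ldots,z_{N-1}\}$. On $\mathcal{E}_n$: (i) $\sum_{z<z_N,\,z\notin S}\sigma_z<\varepsilon_4\sigma_{(n-1)}$; (ii) $\max_{z\in S}\sigma_z\leq\sigma_{(n-1)}$, since $z_N=r_n$ means no site in $(z_1,z_N)$ can exceed the previous record; (iii) $\max_{z\in S}(z_N-z)=z_N-z_1<\Lambda_{n-1}(\varepsilon_2^{-1}+\varepsilon_3^{-1})$; and (iv) $z_N<\Lambda_{n-1}(\varepsilon_4^{-1}+\varepsilon_3^{-1}+\varepsilon_2^{-1})$. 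Combining these and using $\varepsilon_4<1<\varepsilon_2^{-1}+\varepsilon_3^{-1}$, the expectation is bounded by $2(N+1)(\varepsilon_2^{-1}+\varepsilon_3^{-1})\Lambda_{n-1}\sigma_{(n-1)}$, giving an upper bound on $t_n$ with prefactor at most $2N(N+1)/(N-1)$, which one checks is $\leq 6N^2$ for all $N\geq 2$.

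I do not anticipate any genuine obstacle: the ingredients of $\mathcal{E}_n$ have been tailored precisely so that Propositions \ref{prop:hittingtimeub}--\ref{prop:hittingtimelb} close in matching orders of magnitude. The only friction will be bookkeeping of the seven $\varepsilon_i$ and ensuring that each is used in the correct direction; in particular one must verify that on $\mathcal{E}_n$ the near records $z_2,\ldots,z_{N-1}$ really lie strictly below the previous record height $\sigma_{(n-1)}$, which follows from the condition $z_N=r_n$ imposed in~$\mathcal{E}_n$.
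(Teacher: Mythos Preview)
Your proposal is correct and follows essentially the same route as the paper: both bounds come from Propositions \ref{prop:hittingtimeub} and \ref{prop:hittingtimelb} applied with $S=\{z_1,\dots,z_{N-1}\}$, together with the constraints encoded in $\mathcal{E}_n$. The only cosmetic differences are that the paper takes $z=z_1$ rather than $z=z_{N-1}$ in the lower bound (either works, and both yield the same factor $(1-\varepsilon_0)\varepsilon_1^{-1}$), and for the upper bound the paper substitutes a specific value of $t$ into the tail estimate rather than passing through $E_\sigma[\tau_{z_N}]$ explicitly---but since Proposition \ref{prop:hittingtimeub} is itself just Markov's inequality, this is the same argument with slightly different arithmetic.
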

\begin{proof}
For the lower bound, applying the lower bound on hitting times in the first statement of Proposition \ref{prop:hittingtimelb} (with $a = x =0$, $b = z_N$ and $z = z_1$) and the definition of $\mathcal{E}_n$ we have that
\[P_{\sigma}\left(\tau_{z_N}\leq t\right) <  \frac{t}{2(z_N - z_1) \sigma_{z_1}}   <  \frac{t}{2(1-\varepsilon_0) \varepsilon_1^{-1} \Lambda_{(n-1)}\sigma_{(n-1)}}   <  \frac{t \varepsilon_1}{2(1-\varepsilon_0)\Lambda_{(n-1)}\sigma_{(n-1)}}    . \]
Given the definition of $t_n$, taking $ t= {2(1-\varepsilon_0)\Lambda_{(n-1)}\sigma_{(n-1)}}/{\varepsilon_1 N}$ in the above establishes the result. Similarly for the upper bound, applying the upper bound on hitting times in the second statement of Proposition \ref{prop:hittingtimeub} (with $a = x = 0$, $b = z_N$ and ${S} = \{z_1, \ldots, z_{N-1} \}$) and the definition of $\mathcal{E}_n$ we have that
\begin{eqnarray*}
\lefteqn{P_{\sigma} \left( \tau_{z_N} \le t \right)} \\
& >&  1- 2 t^{-1} \left( z_N \sum_{ \{z < z_N\} \setminus \{z_1, \ldots, z_{N-1}\} } \sigma_z  + (N-1)(z_N - z_1) \sigma_{{(n-1)} } \right)   \\
& > &1 - 2 t^{-1}  \left( (\varepsilon_2^{-1}+\varepsilon_3^{-1}+\varepsilon_4^{-1}) \varepsilon_4 \Lambda_{(n-1)} \sigma_{(n-1)} +  (\varepsilon_2^{-1}+\varepsilon_3^{-1})(N-1)\Lambda_{n-1}\sigma_{(n-1)}    \right) \\
& > &1 - 3 t^{-1} (\varepsilon_2^{-1}+\varepsilon_3^{-1})N\Lambda_{n-1}\sigma_{(n-1)}  .
\end{eqnarray*}
Taking $t = 6 (\varepsilon_2^{-1}+\varepsilon_3^{-1})N^2\Lambda_{n-1}\sigma_{(n-1)}$, we have $P_{\sigma}( \tau_{z_N} \le t) > 1 - \frac{1}{2N} > \frac{1}{N}$ (since $N \ge 2$), which establishes the result.
\end{proof}

We now bound the probability mass on the final site at times given by the long time scale, showing that it is very nearly $1/N$.

\begin{lemma}[Probability mass on the final site]
\label{rnbal} Suppose $\mathcal{E}_n$ holds, then
\[P_\sigma\left(X_{t_n}=z_N\right) > \frac{1}{N}\left(1-
\varepsilon_5(\varepsilon_4+\varepsilon_7)-
6\varepsilon_5\max\{\varepsilon_1,\varepsilon_6\} (\varepsilon_2^{-1}+\varepsilon_3^{-1})N^2
\right).\]
\end{lemma}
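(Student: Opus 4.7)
The strategy is to exploit the strong Markov property of $X$ at the hitting time $\tau_{z_N}$, which by the definition of $t_n$ occurs by time $t_n$ with probability exactly $1/N$. Writing
\[
P_\sigma(X_{t_n} = z_N) \;\geq\; \int_0^{t_n} P_\sigma^{z_N}(X_{t_n-s} = z_N)\, P_\sigma(\tau_{z_N}\in ds)
\;\geq\; \frac{1}{N}\inf_{0\leq s\leq t_n} P_\sigma^{z_N}(X_s = z_N),
\]
the task reduces to showing that, started from $z_N$, the walk stays at $z_N$ throughout the interval $[0,t_n]$ with probability at least $1-\varepsilon_5(\varepsilon_4+\varepsilon_7)-6\varepsilon_5\max\{\varepsilon_1,\varepsilon_6\}(\varepsilon_2^{-1}+\varepsilon_3^{-1})N^2$. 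This is intuitively plausible because $\sigma_{z_N}=\sigma_{(n)}$ is the dominant trap in a sizeable neighbourhood, by the event $\mathcal{E}_n$.

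The main step is to couple $X$, started from $z_N$, with the inhomogeneous CTRW $X^{a,b}$ on the window $[a,b]:=[z_{N-1}+1,\, z_N+\varepsilon_6^{-1}\Lambda_{n-1}]$ in the same landscape $\sigma$. The two processes agree up to time $\tau_a\wedge\tau_b$, so for each $s\leq t_n$,
\[
P_\sigma^{z_N}(X_s=z_N)\;\geq\; P^{a,b}_{z_N}(X^{a,b}_s=z_N)\;-\;P^{a,b}_{z_N}(\tau_a\wedge\tau_b\leq s).
\]
The two resulting terms will be estimated by the two preliminary tools from Section~\ref{sec:prelim}.

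For the first term I would invoke Proposition~\ref{prop:local}, which gives
\[
1-P^{a,b}_{z_N}(X^{a,b}_s=z_N)\;\leq\; \frac{1}{\sigma_{z_N}}\sum_{\substack{a\leq z\leq b\\ z\neq z_N}}\sigma_z.
\]
The crucial point is that the interval $[a,b]$ has been chosen precisely to avoid the near-records $z_1,\dots,z_{N-1}$, so the surviving sum is controlled by the two "shallow trap" clauses of $\mathcal{E}_n$, namely $\sum_{z<z_N,\,z\notin\{z_1,\ldots,z_{N-1}\}}\sigma_z<\varepsilon_4\sigma_{(n-1)}$ and $\sum_{z_N<z\leq z_N+\varepsilon_6^{-1}\Lambda_{n-1}}\sigma_z<\varepsilon_7\sigma_{(n-1)}$; combined with $\sigma_{z_N}>\varepsilon_5^{-1}\sigma_{(n-1)}$ this gives the desired bound $\varepsilon_5(\varepsilon_4+\varepsilon_7)$.

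For the second term, since $z_N$ is in the interior of $[a,b]$, the second statement of Proposition~\ref{prop:hittingtimelb} yields
\[
P^{a,b}_{z_N}(\tau_a\wedge\tau_b\leq s)\;\leq\;\frac{s}{\min\{z_N-a,\,b-z_N\}\,\sigma_{z_N}}.
\]
Here $b-z_N=\varepsilon_6^{-1}\Lambda_{n-1}$ and $z_N-a\geq (z_N-z_{N-1})-1\geq \varepsilon_1^{-1}\Lambda_{n-1}-1$ by the spacing clause of $\mathcal{E}_n$, so $\min\{z_N-a,b-z_N\}\geq \max\{\varepsilon_1,\varepsilon_6\}^{-1}\Lambda_{n-1}$ (up to negligible corrections). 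Using again $\sigma_{z_N}>\varepsilon_5^{-1}\sigma_{(n-1)}$ and the upper bound $t_n<6(\varepsilon_2^{-1}+\varepsilon_3^{-1})N^2\Lambda_{n-1}\sigma_{(n-1)}$ provided by Lemma~\ref{lem:tnbounds} produces the second error term. Combining both bounds gives a uniform lower bound on $\inf_{s\leq t_n}P_\sigma^{z_N}(X_s=z_N)$ of the required form, which when multiplied by $1/N$ completes the proof. I do not anticipate a genuine obstacle: the choice of $[a,b]$ is engineered so that every ingredient matches directly to a clause of $\mathcal{E}_n$, and the only care needed is to keep track of constants to match the stated error expression.
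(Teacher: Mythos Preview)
Your proposal is correct and follows essentially the same route as the paper's proof: reduce via the strong Markov property at $\tau_{z_N}$ to bounding $\inf_{s\le t_n}P_\sigma^{z_N}(X_s=z_N)$, couple with a reflected CTRW on a window around $z_N$, and estimate the two error terms via Proposition~\ref{prop:local} and the second part of Proposition~\ref{prop:hittingtimelb} together with the upper bound on $t_n$ from Lemma~\ref{lem:tnbounds}. The only cosmetic difference is the choice of left endpoint: the paper uses $z_N-\varepsilon_1^{-1}\Lambda_{n-1}$ (which on $\mathcal{E}_n$ lies strictly to the right of $z_{N-1}$, so the interval also avoids the near-records), whereas you take $z_{N-1}+1$; both choices yield the same bounds.
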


\begin{proof}
By the Markov property and the definition of $t_n$, we have that
\begin{eqnarray*}
P_\sigma\left(X_{t_n}=z_N\right)&=&P_\sigma\left(X_{t_n}=z_N,\:\tau_{z_N}\leq t_n\right)\\
&\geq &\frac{1}{N}\min_{t\leq t_n}P_\sigma\left(X_t=z_N\:|\:X_0=z_N\right).
\end{eqnarray*}
To estimate the probability here, we consider the inhomogeneous CTRW on
\[ \Omega_n:= [z_N-\varepsilon_{1}^{-1}\Lambda_{n-1}, z_N+\varepsilon_{6}^{-1}\Lambda_{n-1}] \cap \mathbb{Z^+}  \]
in the trapping landscape  $(\sigma_x)_{x\in \Omega_n}$  (see Section \ref{sec:prelim} for the definition of this Markov chain).

Note that on $\mathcal{E}_n$ we have $\Omega_n \subseteq \mathbb{Z}^+$, and hence, in particular, if $X$ and $X^n$ are both started from $z_N$, then their distributions are the same up to the hitting time of the endpoints of $\Omega_n$. Denoting the latter stopping time by $\tau$, we thus obtain
\begin{eqnarray*}
P_\sigma\left(X_{t}=z_N\:|\:x_0=z_N\right)&\geq & P_\sigma\left(X_{t}=z_N,\:\tau>t\:|\:X_0=z_N\right)\\
&=&P^n_{z_N}\left(X^n_{t}=z_N,\:\tau>t\right)\\
&\geq & 1-P^n_{z_N}\left(X^n_{t}\neq z_N\right)-P^n_{z_N}\left(\tau\leq t\right),
\end{eqnarray*}
where ${P}^n_x$ is the law of ${X}^n$ started from $x$.

To bound the first term, applying the localisation result in Proposition \ref{prop:local} (with $z = z_N$ and ${S} = \Omega_n \setminus \{z_N\}$) and the definition of $\mathcal{E}_n$ yields that, for $t \ge 0$,
\[P^n_{z_N}\left(X^n_{t}\neq z_N\right) \leq \frac{  \sum_{z \in \Omega_n \backslash\{z_N\} } \sigma_z }{\sigma_{z_N}} < \varepsilon_5(\varepsilon_4+\varepsilon_7) .\]
To bound the second term, applying the lower bound on hitting times in the second statement of Proposition \ref{prop:hittingtimelb} (with $x = z_N$) and the definition of $\mathcal{E}_n$ yields that, for $t \ge 0$,
\[P^n_{z_N}\left(\tau\leq t\right) <  \frac{ t\varepsilon_5 }  {\min\{  \varepsilon_1^{-1}, \varepsilon_6^{-1} \} \Lambda_{n-1} \sigma_{z_N} } <  \frac{  t \varepsilon_5 \max\{\varepsilon_1,\varepsilon_6\} }{ \Lambda_{n-1}\sigma_{(n-1)} }. \]
Combining with the upper bound on $t_n$ in Lemma \ref{lem:tnbounds}, we have that
\[P^n_{z_N}\left(\tau\leq t_n \right)  <   6 \varepsilon_5 \max\{\varepsilon_1,\varepsilon_6\}( \varepsilon_2^{-1} + \varepsilon_3^{-1}) N^2 . \]
and the result follows.
\end{proof}

We proceed now to investigate the short time scale. To this end, let $\tilde{X}^n$ be the inhomogeneous CTRW on $[0, z_N] \cap \mathbb{Z}$ in the trapping landscape $(\sigma^n_x)_{x \in \mathbb{Z}^+}$ consisting of a copy of $\sigma$ but with $\sigma_{z_N}$ replaced by $\varepsilon_4\sigma_{(n-1)}$. In particular, the processes $X$ and $\tilde{X}^n$ have the same distribution up to hitting $z_N$. For $\varepsilon\in(0,1)$, we define
\[t_{\rm mix}^n(\varepsilon):=\inf\left\{t\geq 0:\:\max_{i=1,\dots,N-1}\sum_{0\leq y\leq z_N}\left|\tilde{P}^n_{z_i}\left(\tilde{X}^n_t=y\right)-\tilde{\pi}^n(y)\right|\leq \varepsilon\right\},\]
where $\tilde{P}^n_x$ is the law of $\tilde{X}^n$ started from $x$, and $\tilde{\pi}^n$ is its invariant probability measure. This is a version of the ($\varepsilon$-)mixing time of $\tilde{X}^n$, and will provide the short time scale for our argument. Note in particular that the maximum is only taken over starting points from $(z_i)_{i=1}^{N-1}$ rather than the entire interval $[0, z_N] \cap \mathbb{Z}$, as would be the case in the usual definition of a mixing time. In this setting, for suitable values of $\varepsilon$ this results in a quantity of a much lower order, since the majority of the mass is explored quickly when $\tilde{X}^n$ is started from one of the sites in the collection $(z_i)_{i=1}^{N-1}$. (Conversely, if the process is started from a site close to $z_N$, it would take a relatively long time to find the vertices $(z_i)_{i=1}^{N-1}$.) The following lemma provides a key estimate on this quantity. The proof is based on an argument for upper bounding mixing times presented in \cite{Aldous82} that involves considering a stopping time at which the random walk hits a stationary random vertex.

\begin{lemma}[Mixing on short time scales]
\label{mixest}  Suppose $\mathcal{E}_n$ holds, and that $2\varepsilon_4\leq \varepsilon_0\leq N^{-1}$. Then
\[ \frac{ t_{\rm mix}^n\left(8{\varepsilon}_0^{1/2}\right) }{\Lambda_{n-1}\sigma_{(n-1)}}  <  \frac{N}{\varepsilon_0^2\varepsilon_3}.\]
\end{lemma}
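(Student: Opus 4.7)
The plan is to apply the strong stationary time framework of \cite{Aldous82}, which for reversible Markov chains yields $t_{\rm mix}(\varepsilon) \le C\cdot \max_x \sum_y \pi(y)\, E_x[\tau_y]/\varepsilon$ for a universal $C>0$. I will bound the weighted hitting time $\sum_y \tilde{\pi}^n(y)\, E^n_{z_i}[\tau_y]$ for each starting vertex $z_i$, $i\in\{1,\dots,N-1\}$, proceeding in three steps.

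\emph{Step 1: Invariant distribution.} By the computation in the proof of Proposition \ref{prop:local}, $\tilde{\pi}^n(y)=\sigma^n_y/(\sum_{z\le z_N}\sigma^n_z)$. On $\mathcal{E}_n$, since $\sigma_{z_i}\in((1-\varepsilon_0)\sigma_{(n-1)},\sigma_{(n-1)}]$ for $i\le N-1$, $\sigma^n_{z_N}=\varepsilon_4\sigma_{(n-1)}$, and the shallow traps contribute at most $\varepsilon_4\sigma_{(n-1)}$ in total, one computes $\sum_z \sigma^n_z\in[(N-1)(1-\varepsilon_0),\,N-1+\varepsilon_0]\sigma_{(n-1)}$ under the hypothesis $2\varepsilon_4\le\varepsilon_0$. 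Consequently $\tilde{\pi}^n(z_i)\ge 1/(N-1)-O(\varepsilon_0)$ for $i\le N-1$, while the combined $\tilde{\pi}^n$-mass on $\{z_N\}$ and the shallow traps is $O(\varepsilon_0)$.

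\emph{Step 2: Hitting time bounds.} Using the expected-visit formula underlying Proposition \ref{prop:hittingtimeub}, for $i,j\in\{1,\dots,N-1\}$ I would bound $E^n_{z_i}[\tau_{z_j}]\le 2|z_j-z_i|\sum_z\sigma^n_z$; on $\mathcal{E}_n$ the gap $|z_j-z_i|\le z_{N-1}-z_1<\varepsilon_3^{-1}\Lambda_{n-1}$, so this is $\le 2N\Lambda_{n-1}\sigma_{(n-1)}/\varepsilon_3$. For $y\in\{z_N\}\cup\{\text{shallow traps}\}$, the crude bound $E^n_{z_i}[\tau_y]\le 2z_N\sum_z\sigma^n_z\le CN(\varepsilon_2^{-1}+\varepsilon_3^{-1}+\varepsilon_4^{-1})\Lambda_{n-1}\sigma_{(n-1)}$ suffices.

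\emph{Step 3: Assembly.} Summing over $y$ weighted by $\tilde{\pi}^n(y)$,
\[\sum_y \tilde{\pi}^n(y)\, E^n_{z_i}[\tau_y] \le \frac{2N\Lambda_{n-1}\sigma_{(n-1)}}{\varepsilon_3} + O(\varepsilon_4)\cdot\frac{CN\Lambda_{n-1}\sigma_{(n-1)}}{\varepsilon_4} \le \frac{C'N\Lambda_{n-1}\sigma_{(n-1)}}{\varepsilon_3},\]
the key point being that the $\varepsilon_4^{-1}$ factor arising for atypical $y$ is absorbed by the $O(\varepsilon_4)$ total $\tilde{\pi}^n$-mass on those sites. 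The Aldous bound then yields $t^n_{\rm mix}(\varepsilon)\le C'N\Lambda_{n-1}\sigma_{(n-1)}/(\varepsilon\,\varepsilon_3)$, and setting $\varepsilon=8\varepsilon_0^{1/2}$ produces a bound of order $N\Lambda_{n-1}\sigma_{(n-1)}/(\varepsilon_3\,\varepsilon_0^{1/2})$, which is strictly smaller than the claimed $N\Lambda_{n-1}\sigma_{(n-1)}/(\varepsilon_3\,\varepsilon_0^{2})$ for small $\varepsilon_0$.

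\emph{Main obstacle.} The principal difficulty is identifying and invoking the precise Aldous-type bound applicable here: although \cite{Aldous82} phrases its result with a maximum over all starting states, the inequality $d_{TV}(P_x(\tilde X^n_t\in\cdot),\tilde{\pi}^n)\le P_x(T^*>t)$ with strong stationary time $T^*$ depending on $x$ applies uniformly, so restricting the max in the definition of $t^n_{\rm mix}$ to $\{z_1,\dots,z_{N-1}\}$ is harmless (and in fact just removes starting points near $z_N$ that would require longer hitting times). A secondary technical matter is tracking the various $O(\varepsilon_0)$ corrections (from the deviation of $\tilde{\pi}^n(z_i)$ from uniformity, and from the $O(\varepsilon_0)$ mass on $\{z_N\}\cup\{\text{shallow}\}$) and verifying they can be absorbed into the factor $8\varepsilon_0^{1/2}$, which is presumably why the lemma's bound carries the somewhat wasteful $1/\varepsilon_0^2$ factor rather than $1/\varepsilon_0^{1/2}$.
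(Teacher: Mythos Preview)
Your overall strategy --- bound the $\tilde\pi^n$-weighted expected hitting time from each $z_i$ and feed this into an Aldous-type inequality --- is in the right spirit, and your Steps~1--3 correctly show that $K:=\sum_y\tilde\pi^n(y)\,\tilde E^n_{z_i}[\tau_y]\le C N\Lambda_{n-1}\sigma_{(n-1)}/\varepsilon_3$ on $\mathcal{E}_n$. However, the bridge you invoke from $K$ to the mixing time has a genuine gap: the hitting time $T$ of a random $\tilde\pi^n$-distributed target is \emph{not} a strong stationary time (the pair $(\tilde X^n_T,T)$ is not independent), so the separation-type bound $d_{\rm TV}(\tilde P^n_x(\tilde X^n_t\in\cdot),\tilde\pi^n)\le \tilde P^n_x(T>t)$ that you rely on in your ``Main obstacle'' paragraph does not follow. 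The result you attribute to \cite{Aldous82} is therefore not the strong-stationary-time inequality but rather Aldous's theorem that, for reversible chains, a stopping time $T$ with $X_T\sim\pi$ (not strong stationary) still controls mixing via an $L^2$/Cauchy--Schwarz argument; the paper's proof reproduces precisely this argument.

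It is worth comparing the two routes. The paper takes $T$ to be the hitting time of a random target in $\{z_1,\dots,z_{N-1}\}$ only (so $\tilde X^n_T$ is merely close to $\tilde\pi^n$, with error governed by the mass estimate at~(\ref{pinlower})), bounds $\tilde P^n_{z_i}(T>t_0)$ directly by a hitting-time argument, and then carries out the Cauchy--Schwarz computation of \cite{Aldous82} explicitly to pass from $T$ to a total-variation bound at a time of order $t_0/\varepsilon_0$. Your proposal instead takes the full $\tilde\pi^n$-target, handles the rare targets by showing their total mass is $O(\varepsilon_4)$, and would invoke the Aldous result as a black box. Once the correct form of that result is used --- namely $t_{\rm mix}(1/4)\le C\,t_{\rm stop}\le C K$ for the \emph{unrestricted} mixing time, followed by submultiplicativity --- your route does work and in fact yields the sharper bound $t^n_{\rm mix}(8\varepsilon_0^{1/2})\le C N\Lambda_{n-1}\sigma_{(n-1)}\log(1/\varepsilon_0)/\varepsilon_3$. (The restriction of starting points to $\{z_1,\dots,z_{N-1}\}$ is then harmless simply because the restricted mixing time is dominated by the unrestricted one; the random-target lemma even makes $K$ starting-point-independent.) The paper's explicit treatment has the advantage of being self-contained and of making transparent where each factor of $\varepsilon_0^{-1}$ originates; your approach is cleaner but transfers the real work to the cited reference.
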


\begin{proof} We start by defining a randomised stopping time $T$, which will be the time taken to hit an almost stationary random vertex in $\{z_i:\:i=1,\dots,N-1\}$. In particular, let $Z$ be a random vertex in $\{z_i:\:i=1,\dots,N-1\}$ that is independent of $\tilde{X}^n$ and satisfies
\[\mathbf{P}\left(Z=z_i\right)=\frac{\tilde{\pi}^n(z_i)}{\sum_{j=1}^{N-1}\tilde{\pi}^n(z_j)},\]
and set
\[T:=\inf\{t\geq 0:\:\tilde{X}^n_t=Z\}.\]
Clearly $\tilde{P}^n_x(X_T=z_i)=\mathbf{P}(Z=z_i)$ for each $x\in \{0,\dots,z_N\}$, $i=1,\dots,N-1$. (For simplicity of notation, we suppose the law $\tilde{P}^n_x$ is the joint law of $\tilde{X}^n$ and $Z$.) Moreover, we have that
\[  \tilde{P}^n_{z_i}(T > t) \le \max\left\{  \tilde{P}^n_{z_1}(\tau_{z_{N-1}} > t) \, , \ \tilde{P}^n_{z_{N-1}}(\tau_{z_{1}} > t)  \right\}  . \]
Applying the upper bound on hitting times in the first statement of Proposition \ref{prop:hittingtimeub} (first with $a = 0, b = z_{N-1}$ and $x = z_1$, and then with $a = z_N, b = z_1$ and $x = z_{N-1}$, by symmetry) and the definition of $\mathcal{E}_n$, yields that (recalling that $\sigma^n_{z_N} = \varepsilon_4 \sigma_{(n-1)}$)
\begin{align}
\label{tbound}
\tilde{P}^n_{z_i}(T > t) & \le    2t^{-1} (z_{N-1} - z_1)  \bigg( \sum_{0 \le z \le z_{N}-1} \sigma_z  + \varepsilon_4 \sigma_{(n-1)} \bigg) \\
\nonumber & <  2t^{-1} \varepsilon_3^{-1} \Lambda_{n-1} \sigma_{(n-1)} \left( N - 1 +  2\varepsilon_4  \right) < 4 N t^{-1}  \varepsilon_3^{-1} \Lambda_{n-1} \sigma_{(n-1)}.
\end{align}

The remainder of the proof closely follows \cite{Aldous82}. Specifically, for $i=1,\dots,N-1$ and $t,t_0>0$, we can write
\begin{eqnarray}
\sum_{0\leq y\leq z_N}\left|\tilde{P}^n_{z_i}\left(\tilde{X}^n_t=y\right)-\tilde{\pi}^n(y)\right|
& \! \! \! \! \leq& \! \! \! \! \! \sum_{0\leq y\leq z_N}\left|\tilde{P}^n_{z_i}\left(\tilde{X}^n_t=y\right)-\tilde{P}^n_{z_i}\left(\tilde{X}^n_t=y,\:T\leq t_0\right)\right|\label{bbb}\\
&&+\sum_{0\leq y\leq z_N}\left|\tilde{P}^n_{z_i}\left(\tilde{X}^n_t=y,\:T\leq t_0\right)-\tilde{\pi}^n(y)\right|.\nonumber
\end{eqnarray}
Let us denote the two sums on the right-hand side by $S_1$ and $S_2$. To deal with the first of these, we simply note that
\begin{equation}\label{bbb1}
S_1=\tilde{P}^n_{z_i}\left(T> t_0\right) <  \frac{4N\varepsilon_3^{-1}\Lambda_{n-1}\sigma_{(n-1)}}{t_0},
\end{equation}
where we have applied (\ref{tbound}) to deduce the inequality. For the second term, we start by applying Cauchy-Schwarz to deduce
\begin{eqnarray}
S_2^2&\leq& \sum_{0\leq y\leq z_N}\frac{1}{\tilde{\pi}^n(y)}\left(\tilde{P}^n_{z_i}\left(\tilde{X}^n_t=y,\:T\leq t_0\right)-\tilde{\pi}^n(y)\right)^2\nonumber\\
&=&\sum_{0\leq y\leq z_N}\frac{1}{\tilde{\pi}^n(y)}\tilde{P}^n_{z_i}\left(\tilde{X}^n_t=y,\:T\leq t_0\right)^2-2\tilde{P}^n_{z_i}\left(T\leq t_0\right)+1.\label{s2}
\end{eqnarray}
Now, define a measure $\nu$ on $\{0,\dots,z_N\}\times [0,t_0]$ by setting
\[\nu\left(\cdot,\cdot\right):=\tilde{P}^n_{z_i}\left(T\leq t_0,\:\left(\tilde{X}^n_T,T\right)\in\left(\cdot,\cdot\right)\right),\]
and a function $f$ on $\{0,\dots,z_N\}\times \mathbb{R}^+$ by
\[f(x,s):=\tilde{P}^n_{z_i}\left(T\leq t_0,\:\tilde{X}^n_{t_0+s/2}=x\right)\equiv\int_{\{0,\dots,z_N\}\times [0,t_0]}\tilde{P}^n_{y}\left(\tilde{X}^n_{t_0+s/2-r}=x\right)\nu(dy,dr).\]
By the definition of $f$ and reversibility, it is possible to deduce that
\begin{eqnarray*}
\lefteqn{\sum_{0\leq y\leq z_N}\frac{1}{\tilde{\pi}^n(y)}f(y,s)^2}\\
&=&\int\int\sum_{0\leq y\leq z_N}\frac{1}{\tilde{\pi}^n(y)}\tilde{P}^n_{y_1}\left(\tilde{X}^n_{t_0+s/2-r_1}=y\right)\tilde{P}^n_{y_2}\left(\tilde{X}^n_{t_0+s/2-r_2}=y\right)\nu(dy_1,dr_1)\nu(dy_2,dr_2)\\
&=&\int\int\frac{1}{\tilde{\pi}^n(y_2)}\tilde{P}^n_{y_1}\left(\tilde{X}^n_{2t_0+s-r_1-r_2}=y_2\right)\nu(dy_1,dr_1)\nu(dy_2,dr_2),
\end{eqnarray*}
where each of the integrals above is over $\{0,\dots,z_N\}\times [0,t_0]$. Hence, for any $s_0$,
\begin{eqnarray*}
\lefteqn{\frac{1}{s_0}\int_0^{s_0}\sum_{0\leq y\leq z_N}\frac{1}{\tilde{\pi}^n(y)}f(y,s)^2ds}\\
&\leq& \frac{1}{s_0}\int\int\frac{1}{\tilde{\pi}^n(y_2)}\int_0^{2t_0+s_0}\tilde{P}^n_{y_1}\left(\tilde{X}^n_{s}=y_2\right)ds\nu(dy_1,dr_1)\nu(dy_2,dr_2)\\
&\leq &\frac{1}{s_0}\sum_{j_1,j_2=1}^{N-1}
\frac{1}{\tilde{\pi}^n(z_{j_2})}\int_0^{2t_0+s_0}\tilde{P}^n_{z_{j_1}}\left(\tilde{X}^n_{s}=z_{j_2}\right)ds
\frac{\tilde{\pi}^n(z_{j_1})\tilde{\pi}^n(z_{j_2})}{\left(\sum_{k=1}^{N-1}\tilde{\pi}^n(z_k)\right)^2},
\end{eqnarray*}
where for the second inequality we note that the inner integral does not depend on $r_1$ or $r_2$, and apply the definition of $\nu$ and the stopping time $T$. To estimate the right-hand side here, we note that $\tilde{\pi}^n(x)$ is proportional to $\sigma^n_x$, from which it is elementary to check that (recalling that $\sigma^n_{z_N} = \varepsilon_4 \sigma_{(n-1)}$)
\begin{equation}\label{pinlower}
\tilde{\pi}^n\left(\{z_k:\:k=1,\dots,N-1\}\right) >  \frac{(N-1)(1-\varepsilon_0)}{N-1+2\varepsilon_4}.
\end{equation}
It follows that
\[\frac{1}{s_0}\int_0^{s_0}\sum_{0\leq y\leq z_N}\frac{1}{\tilde{\pi}^n(y)}f(y,s)^2ds < \frac{2t_0+s_0}{s_0}\times\frac{N-1+2\varepsilon_4}{(N-1)(1-\varepsilon_0)}.\]
For $\varepsilon_0,\varepsilon_4$ satisfying the assumptions of the lemma, and $s_0:=t_0/2\varepsilon_0$, it is straightforward to check that this implies
\[\frac{1}{s_0}\int_0^{s_0}\sum_{0\leq y\leq z_N}\frac{1}{\tilde{\pi}^n(y)}f(y,s)^2ds <  \left(1+4\varepsilon_0\right)^2 < 1+16\varepsilon_0.\]
In particular, there must exist an $s\leq s_0$ such that $\sum_{0\leq y\leq z_N}\frac{1}{\tilde{\pi}^n(y)}f(y,s)^2 < 1+16\varepsilon_0$, and, returning to (\ref{s2}),
\begin{equation}\label{bbb2}
S_2^2 <  16\varepsilon_0+2\tilde{P}^n_{z_i}\left(T> t_0\right)
\end{equation}
for some $t\leq t_0+s_0/2$. Since the left-hand side of (\ref{bbb}) is decreasing, if we choose $t_0:=2N\Lambda_{n-1}\sigma_{(n-1)}/\varepsilon_0\varepsilon_3$, then, by (\ref{bbb1}) and \eqref{bbb2},
\[\sum_{0\leq y\leq z_N}\left|\tilde{P}^n_{z_i}\left(\tilde{X}^n_{t_0+s_0/2}=y\right)-\tilde{\pi}^n(y)\right|\leq S_1+S_2  < 8\varepsilon^{1/2}_0.\]
The result follows.
\end{proof}

We are now ready to put the pieces together to establish that under $\mathcal{E}_n$ the process $X$ is approximately balanced on the sites $\{z_1,\dots,z_{N-1}\}$ at time $t_n$. Since the result for $i=N$ was already established as Lemma \ref{rnbal}, this will be enough to conclude the relevant part of Theorem \ref{thm:main2}.

\begin{proposition}[Balancing of the probability mass]
\label{balance} Suppose $\mathcal{E}_n$ holds, that $2\varepsilon_4\leq \varepsilon_0\leq N^{-1}$ and also that $N^2\varepsilon_1\leq (1-\varepsilon_0)\varepsilon_0^2\varepsilon_3$. Then, there exists a constant $c$ depending only on $N$ such that, for $i=1,\dots,{N-1}$,
\[P_\sigma\left(X_{t_n}=z_i\right) >  \frac{1}{N}\left(1-c\left(\varepsilon_0^{1/2}+\frac{\varepsilon_1}{\varepsilon_0^2\varepsilon_3}\right)\right).\]
\end{proposition}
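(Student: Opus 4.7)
The plan is to combine the hitting-time strategy used in the proof of Lemma \ref{rnbal} with the mixing estimate of Lemma \ref{mixest}. Writing $\tilde{\tau}:=\inf\{s:\tilde{X}^n_s = z_N\}$ for brevity, the coupling of $X$ and $\tilde{X}^n$ up to time $\tau_{z_N}$ gives
\[
P_\sigma(X_{t_n} = z_i) \ge \tilde{P}^n_0(\tilde{X}^n_{t_n} = z_i,\, \tilde{\tau} > t_n) = \tilde{P}^n_0(\tilde{X}^n_{t_n} = z_i) - \tilde{P}^n_0(\tilde{X}^n_{t_n} = z_i,\, \tilde{\tau} \le t_n),
\]
and I will lower bound the first piece and upper bound the second piece separately. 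Note that the hypothesis $N^2\varepsilon_1 \le (1-\varepsilon_0)\varepsilon_0^2\varepsilon_3$, combined with Lemmas \ref{lem:tnbounds} and \ref{mixest}, guarantees $t_n \ge 2 t_{\rm mix}^n(8\varepsilon_0^{1/2})$, leaving room for the process to mix.

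For the first piece, let $\tau^\ast := \inf\{s:\tilde{X}^n_s \in \{z_1,\ldots,z_{N-1}\}\}$. Proposition \ref{prop:hittingtimeub} applied with $a = x = 0$ and $b = z_1$, together with the $\mathcal{E}_n$-bounds $z_1 < \varepsilon_4^{-1}\Lambda_{n-1}$ and $\sum_{z < z_1}\sigma_z < \varepsilon_4\sigma_{(n-1)}$, shows that $\tilde{P}^n_0(\tau^\ast > t_{\rm mix}^n) = O(\varepsilon_1/(\varepsilon_0^2\varepsilon_3))$. Applying the strong Markov property at $\tau^\ast$ (the walk is then at some $z_j\in\{z_1,\ldots,z_{N-1}\}$, which is exactly where the mixing estimate is stated) and using the invariant-measure lower bound $\tilde{\pi}^n(z_i) \ge (1-\varepsilon_0)/(N-1+2\varepsilon_4)$ from \eqref{pinlower} yields
\[
\tilde{P}^n_0(\tilde{X}^n_{t_n} = z_i) \ge \tilde{\pi}^n(z_i) - 8\varepsilon_0^{1/2} - O\!\left(\varepsilon_1/(\varepsilon_0^2\varepsilon_3)\right).
\]

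For the second piece, I apply strong Markov at $\tilde{\tau}$ to write
\[
\tilde{P}^n_0(\tilde{X}^n_{t_n} = z_i,\, \tilde{\tau} \le t_n) = \int_0^{t_n}\tilde{P}^n_0(\tilde{\tau}\in du)\,\tilde{P}^n_{z_N}(\tilde{X}^n_{t_n - u} = z_i),
\]
noting that $\tilde{P}^n_0(\tilde{\tau}\le t_n) = P_\sigma(\tau_{z_N}\le t_n) = 1/N$ by the coupling and the definition of $t_n$. On the good range $u \le t_n - 2t_{\rm mix}^n$, I would bound the integrand by $\tilde{\pi}^n(z_i) + O(\varepsilon_0^{1/2} + \varepsilon_1/(\varepsilon_0^2\varepsilon_3))$ using the analogous argument starting from $z_N$: the shallow value $\sigma^n_{z_N}=\varepsilon_4\sigma_{(n-1)}$ and the bound $z_N - z_{N-1} > \varepsilon_1^{-1}\Lambda_{n-1}$ from $\mathcal{E}_n$ imply (via Proposition \ref{prop:hittingtimeub}) that the walk reaches $\{z_1,\ldots,z_{N-1}\}$ within $t_{\rm mix}^n$ with overwhelming probability, and Lemma \ref{mixest} takes over thereafter. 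On the short range $u \in (t_n - 2t_{\rm mix}^n, t_n)$, I bound the integrand by $1$ and control the mass $\tilde{P}^n_0(\tilde{\tau}\in(t_n - 2t_{\rm mix}^n, t_n))$ by conditioning at time $t_n - 2t_{\rm mix}^n$ and applying Proposition \ref{prop:hittingtimelb} from the worst-case starting point $z_{N-1}$, which again yields $O(\varepsilon_1/(\varepsilon_0^2\varepsilon_3))$. The net upper bound is $\tilde{\pi}^n(z_i)/N + O(\varepsilon_0^{1/2} + \varepsilon_1/(\varepsilon_0^2\varepsilon_3))$.

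Combining the two estimates and using $\tilde{\pi}^n(z_i)(1 - 1/N) \ge (1/N)(1 - O(\varepsilon_0))$ (from the explicit bound on $\tilde{\pi}^n(z_i)$ together with $2\varepsilon_4 \le \varepsilon_0 \le 1/N$) yields the stated lower bound, with all $N$-dependent prefactors absorbed into the constant $c$. The main technical obstacle I anticipate is establishing the bound $\tilde{P}^n_{z_N}(\tilde{X}^n_s = z_i) \le \tilde{\pi}^n(z_i) + O(\varepsilon_0^{1/2} + \varepsilon_1/(\varepsilon_0^2\varepsilon_3))$ for $s \ge 2t_{\rm mix}^n$, since Lemma \ref{mixest} is formulated only for starting points in $\{z_1,\ldots,z_{N-1}\}$; this requires the two-step argument (escape $z_N$, then hit $\{z_1,\ldots,z_{N-1}\}$) sketched above, combined with careful bookkeeping to ensure that the error contributions from the integral decomposition collapse to the stated $c(\varepsilon_0^{1/2} + \varepsilon_1/(\varepsilon_0^2\varepsilon_3))$.
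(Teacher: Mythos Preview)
Your decomposition is more elaborate than necessary and runs into a genuine obstacle at the ``second piece''. The paper's argument is considerably simpler: it drops the event $\{\tau_{z_N}\le t_n\}$ entirely and applies the Markov property at the single deterministic time $s_n:=t_n-t_{\rm mix}^n(8\varepsilon_0^{1/2})$. On $\{\tau_{z_N}>s_n\}$ (which has probability at least $(N-1)/N$ by definition of $t_n$), the walk is in $\{z_1,\dots,z_{N-1}\}$ at time $s_n$ up to a small error by Proposition~\ref{prop:local}; then from any such $z_j$ one runs for exactly $t_{\rm mix}^n$ and reads off $\tilde\pi^n(z_i)\approx 1/(N-1)$ from Lemma~\ref{mixest}. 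The product $(N-1)/N\times 1/(N-1)$ gives the $1/N$, and no analysis starting from $z_N$ is ever needed.

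Your route instead requires the upper bound $\tilde P^n_{z_N}(\tilde X^n_s=z_i)\le \tilde\pi^n(z_i)+O(\varepsilon_0^{1/2}+\varepsilon_1/(\varepsilon_0^2\varepsilon_3))$, and here is the concrete gap. To pass from $z_N$ to a site where Lemma~\ref{mixest} applies, the walk must cross the interval $(z_{N-1},z_N)$, whose length is only controlled from above by $\varepsilon_2^{-1}\Lambda_{n-1}$ on $\mathcal{E}_n$. The resulting Markov-inequality bound on $\tilde P^n_{z_N}(\tau_{z_{N-1}}>T_1)$ is of order $\varepsilon_2^{-1}\varepsilon_4\Lambda_{n-1}\sigma_{(n-1)}/T_1$; balancing this against the short-range mass (which forces $T_1$ to be at most of order $\Lambda_{n-1}\sigma_{(n-1)}/(\varepsilon_0^2\varepsilon_3)$) leaves an error of order $\varepsilon_2^{-1}\varepsilon_4\varepsilon_0^2\varepsilon_3$. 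This depends on $\varepsilon_2$, which neither appears in the hypotheses linking the $\varepsilon_i$'s nor in the claimed error $c(\varepsilon_0^{1/2}+\varepsilon_1/(\varepsilon_0^2\varepsilon_3))$ with $c=c(N)$. With the parameter choices actually used downstream ($\varepsilon_0=\varepsilon^2$, $\varepsilon_2=\varepsilon^7$, $\varepsilon_3=\varepsilon$) this term is $\varepsilon^{-2}\varepsilon_4$, which cannot be absorbed. (A related slip: your claim that $\tilde P^n_0(\tau^*>t_{\rm mix}^n)$ is small needs a \emph{lower} bound on $t_{\rm mix}^n$, which is nowhere provided; the fix is to hit $z_1$ by time $t_n/2$ instead, but the same repair does not save the second piece.) The cleanest remedy is to abandon the attempt to recover mass from $\{\tilde\tau\le t_n\}$ and follow the paper's one-step conditioning at $s_n$.
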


\begin{proof} We first note that under the assumptions on the $(\varepsilon_i)_{i=0}^7$, the lower bound in Lemma \ref{lem:tnbounds} and Lemma \ref{mixest} imply that $t_n >  2t_{\rm mix}^n(\tilde{\varepsilon})$, where $\tilde{\varepsilon}:=8\varepsilon_0^{1/2}$. So, we have that $s_n:=t_n-t_{\rm mix}^n(\tilde{\varepsilon}) >  t_{\rm mix}^n(\tilde{\varepsilon})>0$, and we can apply the Markov property at time $s_n$ to obtain, for $i\in\{1,\dots, N-1\}$,
\begin{eqnarray}
P_\sigma\left(X_{t_n}=z_i\right)&\geq &
P_\sigma\left(X_{t_n}=z_i,\:\tau_{z_N}>t_n\right)\nonumber\\
&\geq&\sum_{j=1}^{N-1}P_\sigma\left({X}_{s_n}=z_j,\:\tau_{z_N}>s_n\right)\tilde{P}^n_{z_j}\left(\tilde{X}^n_{t_{\rm mix}^n(\tilde{\varepsilon})}=z_i,\:\tau_{z_N}>t_{\rm mix}^n(\tilde{\varepsilon})\right).\label{twoterms}
\end{eqnarray}
For the second probability in the above expression, we have that
\begin{eqnarray*}
\lefteqn{\tilde{P}^n_{z_j}\left(\tilde{X}^n_{t_{\rm mix}^n(\tilde{\varepsilon})}=z_i,\:\tau_{z_N}>t_{\rm mix}^n(\tilde{\varepsilon})\right)}\\
&\geq& \tilde{P}^n_{z_j}\left(\tilde{X}^n_{t_{\rm mix}^n(\tilde{\varepsilon})}=z_i\right)-\tilde{P}^n_{z_j}\left(\tau_{z_N}\leq t_{\rm mix}^n(\tilde{\varepsilon})\right)\\
& > & \tilde{\pi}^n(z_i)-\tilde{\varepsilon}-\frac{ t_{\rm mix}^n(\tilde{\varepsilon})}{(1-\varepsilon_0)\varepsilon_1^{-1}\Lambda_{(n-1)}\sigma_{(n-1)}},
\end{eqnarray*}
where, to deduce the final inequality, we have applied the definition of the mixing time, and bounded $\tilde{P}^n_{z_j}(\tau_{z_N}\leq t_{\rm mix}^n(\tilde{\varepsilon}))$ using the lower bound on hitting times in the first statement of Proposition \ref{prop:hittingtimelb} (with $a = 0$, $b = z_N$ and $x = z = z_j$). Plugging in the definition of $\tilde{\varepsilon}$, noting the bound of Lemma \ref{mixest}, and estimating the measure similarly to (\ref{pinlower}), we thus find that $\tilde{P}^n_{z_j}(\tilde{X}^n_{t_{\rm mix}^n(\tilde{\varepsilon})}=z_i,\:\tau_{z_N}>t_{\rm mix}^n(\tilde{\varepsilon}))$ is bounded below by
\begin{equation}\label{lowerfirst}
\frac{1}{N-1}-\frac{3\varepsilon_0}{(1-\varepsilon_0)}-8\varepsilon_0^{1/2}-
\frac{N\varepsilon_1}{(1-\varepsilon_0)\varepsilon_0^2\varepsilon_3}.
\end{equation}
For the remaining part of the sum at (\ref{twoterms}), we have that
\begin{eqnarray*}
\sum_{j=1}^{N-1}P_\sigma\left({X}_{s_n}=z_j,\:\tau_{z_N}>s_n\right)&\geq&
P_\sigma\left(\tau_{z_N}>s_n\right)-\tilde{P}^n_0\left(\tilde{X}_{s_n}\not\in\{z_j:\:j=1,\dots,{N-1}\}\right).
\end{eqnarray*}
Clearly $P_\sigma(\tau_{z_N}>s_n)>(N-1)/N$, by the definition of $t_n$. Furthermore
\begin{eqnarray*}
\lefteqn{\tilde{P}^n_0\left(\tilde{X}_{s_n}\not\in\{z_j:\:j=1,\dots,{N-1}\}\right)}\\
&\leq &P_\sigma\left(\tau_{z_1}>s_n\right)+\sup_{t\geq 0}\tilde{P}^n_{z_1}\left(\tilde{X}_t \not\in\{z_j:\:j=1,\dots,{N-1}\}\right)\\
& < & N\varepsilon_1+ \frac{\varepsilon_4}{(N-1)(1-\varepsilon_0)},
\end{eqnarray*}
where in the second inequality we used the upper bound on hitting times in the first statement of Proposition \ref{prop:hittingtimeub} (with $a = x = 0$ and $b = z_1$), the lower bound on $s_n >t_n/2$ of Lemma \ref{lem:tnbounds}, and the localisation result in Proposition \ref{prop:local} (with ${S} = [0, z_N] \cap \mathbb{Z}^+ \setminus \{z_1, \ldots, z_N \}$).

In particular, we conclude that
\[\sum_{j=1}^{N-1}P_\sigma\left({X}_{s_n}=z_j,\:\tau_{r_n}>s_n\right) >
\frac{N-1}{N}- N\varepsilon_1-\frac{\varepsilon_4}{(N-1)(1-\varepsilon_0)}.\]
Putting this together with (\ref{lowerfirst}), we obtain the result.
\end{proof}

\begin{corollary}[Completion of the proof of Theorem \ref{thm:main2}]
\label{cor:main2}
Suppose Assumption \ref{assump} holds. Then
\[\liminf_{t\rightarrow\infty}\sup_{x\in\mathbb{Z}^+}P_\sigma\left(X_t=x\right)=\frac{1}{N} \qquad \mathbf{P}\text{-almost-surely.}\]
\end{corollary}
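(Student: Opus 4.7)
The lower bound $\liminf_{t\to\infty} \sup_{x\in\mathbb{Z}^+} P_\sigma(X_t=x) \geq 1/N$ is immediate from the first part of Theorem~\ref{thm:main1}, which was established in Section~\ref{sec:loc}: since $|\Gamma_t| \leq N$ and $P_\sigma(X_t \in \Gamma_t) \to 1$ almost surely, by pigeonhole some site in $\Gamma_t$ carries probability at least $(1 - o(1))/N$. All that remains is therefore the matching upper bound, which I plan to obtain by exhibiting, for each $\delta > 0$, an almost-sure subsequence of times along which the supremum is at most $1/N + O(\delta)$.

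Fix $\delta > 0$. The first step is to choose positive constants $\varepsilon_0, \varepsilon_1, \varepsilon_2, \varepsilon_3, \varepsilon_5, \varepsilon_6, \varepsilon_7 \in (0,1)$ with $\varepsilon_2 < \varepsilon_1$ (so that the interval in the definition of $\mathcal{E}_n$ is non-empty), together with the $n$-dependent value $\varepsilon_4 := 1/(4\log n)$, such that the constraints $\varepsilon_0 \leq N^{-1}$ and $N^2 \varepsilon_1 \leq (1-\varepsilon_0)\varepsilon_0^2 \varepsilon_3$ of Proposition~\ref{balance} hold, and such that
\[ c\bigl(\varepsilon_0^{1/2} + \varepsilon_1/(\varepsilon_0^2 \varepsilon_3)\bigr) < \delta \quad \text{and} \quad \varepsilon_5 \varepsilon_7 + 6\varepsilon_5 \max\{\varepsilon_1,\varepsilon_6\}(\varepsilon_2^{-1} + \varepsilon_3^{-1}) N^2 < \delta . \]
These are the error terms of Proposition~\ref{balance} and Lemma~\ref{rnbal} respectively, discounting the remaining contribution $\varepsilon_5 \varepsilon_4 = O(1/\log n)$ which vanishes automatically as $n \to \infty$. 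A concrete choice is obtainable by picking $\varepsilon_0$ of order $\delta^2$, $\varepsilon_3$ of order $1$, then $\varepsilon_1$ sufficiently small (with $\varepsilon_2$ slightly smaller so the interval is non-empty but $\varepsilon_2^{-1}$ is comparable to $\varepsilon_1^{-1}$), and finally $\varepsilon_5, \varepsilon_6, \varepsilon_7$ sufficiently small; the constraint $2\varepsilon_4 \leq \varepsilon_0$ then holds for all large $n$.

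For this fixed parameter choice, Assumption~\ref{assump} guarantees that $\mathcal{E}_n$ holds infinitely often on an event of full $\mathbf{P}$-measure. Combining Lemma~\ref{rnbal} and Proposition~\ref{balance}, on each such (sufficiently large) $n$ every one of the sites $z_1, \ldots, z_N$ satisfies $P_\sigma(X_{t_n} = z_i) \geq (1-2\delta)/N$. Summing these $N$ inequalities and using $\sum_x P_\sigma(X_{t_n}=x) = 1$, the total mass outside $\{z_1,\ldots,z_N\}$ is at most $2\delta$, and rearranging gives $P_\sigma(X_{t_n} = z_i) \leq 1/N + 2(N-1)\delta/N$ for each $i$. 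Hence
\[ \sup_{x \in \mathbb{Z}^+} P_\sigma(X_{t_n} = x) \leq \frac{1}{N} + C_N \delta \]
for some $C_N$ depending only on $N$, so that $\liminf_{t\to\infty}\sup_x P_\sigma(X_t=x) \leq 1/N + C_N \delta$ almost surely on this event.

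Finally, taking a sequence $\delta_k \downarrow 0$ with a corresponding countable family of rational parameter tuples, the intersection of the countably many full-measure events produced above still has full measure, and on it the displayed $\liminf$ bound holds with $\delta$ replaced by every $\delta_k$; letting $k \to \infty$ yields $\liminf_{t\to\infty}\sup_x P_\sigma(X_t=x) \leq 1/N$, and combining with the lower bound completes the proof. The main subtlety is the simultaneous satisfiability of the parameter constraints in paragraph two; no individual step is deep, but careful bookkeeping is needed to verify that Proposition~\ref{balance}, Lemma~\ref{rnbal} and the definition of $\mathcal{E}_n$ can all be invoked with the same parameters.
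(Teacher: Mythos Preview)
Your approach is essentially the same as the paper's: fix small parameters so that the error terms in Lemma~\ref{rnbal} and Proposition~\ref{balance} are below a prescribed $\delta$, invoke Assumption~\ref{assump} to obtain infinitely many $n$ on which $\mathcal{E}_n$ holds, deduce that each of $z_1,\dots,z_N$ carries mass close to $1/N$ at time $t_n$, and let $\delta\to 0$. The paper makes the same choices more concretely (setting $\varepsilon_0=\varepsilon_5=\varepsilon^2$, $\varepsilon_1=\varepsilon_6=\varepsilon^6$, $\varepsilon_2=\varepsilon^7$, $\varepsilon_3=\varepsilon_7=\varepsilon$), but your abstract parameter selection works just as well.

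There is one small but genuine omission: you never verify that the times $t_n$ tend to infinity along the subsequence where $\mathcal{E}_n$ holds. Without this, the bound $\sup_x P_\sigma(X_{t_n}=x)\leq 1/N+C_N\delta$ does not give any information about $\liminf_{t\to\infty}$. The paper handles this explicitly by combining the lower bound of Lemma~\ref{lem:tnbounds}, namely $t_n> 2(1-\varepsilon_0)\varepsilon_1^{-1}N^{-1}\Lambda_{n-1}\sigma_{(n-1)}$, with the fact that $\sigma_{(n-1)}\to\infty$ (Lemma~\ref{lem:probrecord}). You should insert this one-line check before concluding.
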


\begin{proof} Fix $\varepsilon\in(0,1)$. By Assumption \ref{assump}, we can suppose that almost-surely there exists an infinite sequence $(n_i)_{i\geq 1}$ such that $\cap_{i\geq1}\mathcal{E}_{n_i}$ holds for $\varepsilon_0=\varepsilon_5=\varepsilon^2$, $\varepsilon_1=\varepsilon_6=\varepsilon^6$, $\varepsilon_3=\varepsilon_7=\varepsilon$, $\varepsilon_4=\frac{1}{4\log n_i}$ and $\varepsilon_2=\varepsilon^7$. Now, by Lemma \ref{rnbal} and Proposition \ref{balance}, on $\cap_{i\geq1}\mathcal{E}_{n_i}$ we have that
\[\limsup_{i\rightarrow \infty}\sup_{z\in\mathbb{Z}^+}P_\sigma\left(X_{t_{n_i}}=z\right)\leq \frac1N\left(1+c\varepsilon\right),\]
where $c$ is some deterministic constant. Thus to complete the proof, it will suffice to show that on $\cap_{i\geq1}\mathcal{E}_{n_i}$ we also have $t_{n_i}\rightarrow\infty$ almost-surely. By Lemmas \ref{lem:probrecord} and \ref{lem:tnbounds}, we have that on $\cap_{i\geq1}\mathcal{E}_{n_i}$,
$t_{n_i} >  N^{-1}\varepsilon_1^{-1}\Lambda_{n_i-1}\sigma_{(n_i-1)}\rightarrow\infty$ almost-surely, as desired.
\end{proof}

\begin{corollary}[Completion of the proof of Theorem \ref{thm:main3}]\label{cor:main3} Suppose Assumption \ref{assump} holds and that $N \ge 3$. Then
\[ \limsup_{t \to \infty} P_\sigma(X_t\not \in \mathcal{R})
\ge  \frac{N-2}{N}  \qquad  \mathbf{P}\text{-almost-surely.} \]
\end{corollary}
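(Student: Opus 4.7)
The plan is to mirror the argument of Corollary \ref{cor:main2} essentially verbatim, with the only new ingredient being a structural observation about which of the localisation sites in the favourable event $\mathcal{E}_n$ belong to $\mathcal{R}$. Recall that on $\mathcal{E}_n$ we have $z_1 = r_{n-1}$ and $z_N = r_n$, both of which are records by construction. However, for each $i = 2, \ldots, N-1$ the site $z_i$ satisfies $r_{n-1} < z_i < r_n$, i.e.\ it lies strictly between two consecutive records. Since no site strictly between two consecutive records can itself be a record (such a site would satisfy $\sigma_{z_i} \le \sigma_{(n-1)}$, the maximum of the sequence on $[0,z_i) \cap \mathbb{Z}^+$), it follows that $z_i \notin \mathcal{R}$ for all $i = 2, \ldots, N-1$. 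Thus on $\mathcal{E}_n$ the collection $\{z_1,\dots,z_N\}$ contains exactly $N-2$ non-record sites.

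With this observation in hand, I would fix $\varepsilon \in (0,1)$ and invoke Assumption \ref{assump} to extract, almost-surely, an infinite subsequence $(n_i)_{i \ge 1}$ along which $\mathcal{E}_{n_i}$ holds with the parameter choices used in Corollary \ref{cor:main2}, namely $\varepsilon_0 = \varepsilon_5 = \varepsilon^2$, $\varepsilon_1 = \varepsilon_6 = \varepsilon^6$, $\varepsilon_3 = \varepsilon_7 = \varepsilon$, $\varepsilon_4 = 1/(4\log n_i)$ and $\varepsilon_2 = \varepsilon^7$. Applying Proposition \ref{balance} to each $n_i$ and summing the lower bounds over $j = 2, \ldots, N-1$ would give
\[ P_\sigma(X_{t_{n_i}} \notin \mathcal{R}) \ge \sum_{j=2}^{N-1} P_\sigma(X_{t_{n_i}} = z_j) \ge \frac{N-2}{N}\bigl(1 - c\varepsilon\bigr), \]
for a constant $c$ depending only on $N$. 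Since $t_{n_i} \to \infty$ almost-surely on $\cap_{i \ge 1} \mathcal{E}_{n_i}$, by exactly the argument already used at the end of Corollary \ref{cor:main2}, this yields $\limsup_{t \to \infty} P_\sigma(X_t \notin \mathcal{R}) \ge \frac{N-2}{N}(1 - c\varepsilon)$, and the claim follows on letting $\varepsilon \downarrow 0$.

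Given that Proposition \ref{balance} and the auxiliary framework of Corollary \ref{cor:main2} have already done all of the analytic work, I do not anticipate any genuine obstacle; the proof reduces to the bookkeeping step of identifying the $N-2$ intermediate near-record sites as non-records. The conceptual content is simply that, once $N \ge 3$, the $N$-site localisation picked out by Proposition \ref{balance} necessarily includes strictly more non-record sites than record sites, so the mass on non-records cannot vanish along the subsequence $(t_{n_i})$.
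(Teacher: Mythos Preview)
Your proposal is correct and follows essentially the same route as the paper's proof: both arguments invoke Assumption \ref{assump} and Proposition \ref{balance} along a subsequence $(n_i)$, sum the resulting lower bounds over the intermediate sites $z_2,\dots,z_{N-1}$, and then observe that on $\mathcal{E}_n$ these sites lie strictly between the consecutive records $r_{n-1}$ and $r_n$ and hence are not in $\mathcal{R}$. Your write-up is slightly more explicit about the $\varepsilon\downarrow 0$ step and about why the intermediate $z_i$ fail to be records, but the structure is identical.
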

\begin{proof}
As in the proof of Corollary \ref{cor:main2}, by Assumption \ref{assump}, Proposition \ref{balance} and Lemmas \ref{lem:probrecord} and \ref{lem:tnbounds}, there exists a sequence $(n_i)_{i \ge i}$ such that $\cap_{i\geq1}\mathcal{E}_{n_i}$ holds for a certain choice of $(\varepsilon_i)_{0 \le i \le 7}$, and such that, on $\cap_{i\geq1}\mathcal{E}_{n_i}$, both
\[  \limsup_{i \to \infty}  P_\sigma \left(  X_{t_{n_i}} \in \{z_2, \ldots, z_{N-1} \}  \right)  \ge  \frac{N-2}{N} \]
and $t_{n_i} \to \infty$ hold almost-surely. To complete the proof, note simply that on the event $\mathcal{E}_n$ the sites $z_2, \ldots , z_{N-1}$ are not contained in the record traps $\mathcal{R}$ by definition.
\end{proof}

\subsection{The favourable event occurs infinitely often}

In this section, we establish that the event $\mathcal{E}_n$ occurs infinitely often; throughout we shall work under Assumptions \ref{assumpt:sosv} and~\ref{assumpt:g22}. Our proof breaks down into two parts. We start by considering the part involving the sum over vertices $z<z_N$. In particular, note that, for any $\varepsilon_0\in(0,1)$,
\[\left\{z_N=r_n,\:\sum_{\substack{z<z_N:\\z\not\in\{z_1,\dots,z_{N-1}\}}}\sigma_z>\varepsilon_4\sigma_{(n-1)}\right\}\subseteq
\left\{S_{r_n-1}^{(N)}>\varepsilon_4\sigma_{(n-1)}\right\},\]
and we recall from Lemma \ref{anlem} that if $\varepsilon_4:=1/4\log n$ then the right-hand side only occurs for finitely many $n$ almost-surely (recall that we are working under Assumptions \ref{assumpt:sosv} and \ref{assumpt:g22}). Moreover, for the same choice of $\varepsilon_4$, we have from Lemma \ref{lem:probrecord} and the slow-variation of $L$ that, for any $\varepsilon_0\in(0,1)$, $z_1=r_{n-1}<2\log n L(\sigma_{(n-2)}) <\varepsilon_4^{-1}\Lambda_{n-1}$ eventually almost-surely. Hence to show that $\mathcal{E}_n$ occurs infinitely often with $\varepsilon_4=1/4\log n$, it will be enough to show that
\begin{eqnarray*}
\tilde{\mathcal{E}}_n&:=&\left\{z_N=r_n,\:\frac{z_N-z_{N-1}}{\Lambda_{n-1}}\in(\varepsilon^{-1}_1,\varepsilon^{-1}_2),
\:\frac{z_{N-1}-z_1}{\Lambda_{n-1}}<\varepsilon_3^{-1}\right\}\\
&&\cap\left\{\sigma_{(n)}>\varepsilon_5^{-1}\sigma_{(n-1)},
\:\sum_{z_N<z<z_N+\varepsilon_6^{-1}\Lambda_{n-1}}\sigma_z<\varepsilon_7\sigma_{(n-1)}
\right\}.
\end{eqnarray*}
holds infinitely often. Together with the conditional Borel-Cantelli lemma, the following lemma establishes that this is indeed the case.

\begin{proposition} Suppose $(\varepsilon_i)_{i = 0}^7$ are such that $\varepsilon_i \in (0, 1)$, $\varepsilon_1^{-1}<\varepsilon_{2}^{-1}$ and $\varepsilon_4:=1/4\log n$. Let $\mathcal{F}_n$ denote the filtration generated by $\{ \sigma_z: z \leq r_{n+1} \}$. Then $\tilde{\mathcal{E}}_n\in\mathcal{F}_n$, and, under Assumptions \ref{assumpt:sosv} and \ref{assumpt:g22},
\[ \sum_n \mathbf{P}\left( \tilde{\mathcal{E}}_{2(n+1)}  | \mathcal{F}_{2n} \right) = \infty  \]
almost-surely.
\end{proposition}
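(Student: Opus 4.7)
The approach is first to verify measurability, then to produce an asymptotic lower bound on $\mathbf{P}(\tilde{\mathcal{E}}_{2(n+1)} \mid \mathcal{F}_{2n})$ of order $d(e^{2n+1})^{N-2}$, at which point Assumption~\ref{assumpt:g22}(a) together with the conditional Borel--Cantelli lemma concludes the argument.

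For measurability, I would observe that on $\tilde{\mathcal{E}}_n$ the sum condition forces each individual trap $\sigma_z$ with $z_N < z < z_N + \varepsilon_6^{-1}\Lambda_{n-1}$ to satisfy $\sigma_z \le \varepsilon_7 \sigma_{(n-1)} < \sigma_{(n-1)} < \sigma_{(n)}$, so no new record can appear in this range; hence $z_N + \varepsilon_6^{-1}\Lambda_{n-1} \le r_{n+1}$ on $\tilde{\mathcal{E}}_n$. Conversely, if $r_{n+1} < z_N + \varepsilon_6^{-1}\Lambda_{n-1}$, then the sum in question exceeds $\sigma_{(n)} > \sigma_{(n-1)} > \varepsilon_7 \sigma_{(n-1)}$, so $\tilde{\mathcal{E}}_n$ fails. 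Either way, $\tilde{\mathcal{E}}_n$ is determined by traps at indices at most $r_{n+1}$, hence by data in $\mathcal{F}_n$.

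For the lower bound, set $u := \sigma_{(2n+1)}$, noting $u \in \mathcal{F}_{2n}$, and $\Lambda := L((1-\varepsilon_0) u)$. Conditional on $\mathcal{F}_{2n}$, the traps $(\sigma_{r_{2n+1}+k})_{k \ge 1}$ form an i.i.d.\ sequence of copies of $\sigma_0$, independent of $\mathcal{F}_{2n}$. I would split this sequence into its near-records at level $(1-\varepsilon_0)u$ and the complementary low traps, and then exhibit a favourable scenario obtained by imposing, using the appropriate conditional independence: (i) that the first $N-2$ near-records after $r_{2n+1}$ have heights in $((1-\varepsilon_0)u, u)$, contributing a factor $(c_1\, g(u))^{N-2}$ by second-order slow variation, since each such conditional probability is $1 - L((1-\varepsilon_0)u)/L(u) \sim |\log(1-\varepsilon_0)|\,g(u)$; (ii) that the rescaled spacings $(z_{i+1}-z_i)/\Lambda$ of the near-records satisfy the constraints defining $\tilde{\mathcal{E}}_{2n+2}$, which has positive asymptotic probability since the rescaled spacings converge to i.i.d.\ unit-rate exponentials; (iii) that $\sigma_{(2n+2)} > \varepsilon_5^{-1} u$, which has conditional probability $L((1-\varepsilon_0) u)/L(\varepsilon_5^{-1} u) \to 1$ by slow variation; and (iv) that no near-record occurs in $(z_N, z_N + \varepsilon_6^{-1}\Lambda)$ (asymptotic probability $e^{-\varepsilon_6^{-1}}$), and that the low traps therein sum to at most $\varepsilon_7 u$, whose expected value is $o(u)$ by the second statement of Proposition~\ref{prop:sosv}, so that Markov's inequality gives probability tending to $1$. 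Multiplying yields, for some constant $c(\varepsilon_0,\ldots,\varepsilon_7) > 0$ independent of $n$,
\[ \mathbf{P}(\tilde{\mathcal{E}}_{2(n+1)} \mid \mathcal{F}_{2n}) \ge c \, g(\sigma_{(2n+1)})^{N-2} \]
eventually almost-surely.

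To conclude, Lemma~\ref{lem:probrecord} gives $\log L(\sigma_{(2n+1)}) \in ((1-\varepsilon)(2n+1), (1+\varepsilon)(2n+1))$ eventually almost-surely; combined with the eventual monotonicity of $g$ (and hence of $d$) from Assumption~\ref{assumpt:sosv}, this yields $g(\sigma_{(2n+1)}) \ge d(e^{(2n+1)(1+\varepsilon)})$ eventually, whence $\sum_n \mathbf{P}(\tilde{\mathcal{E}}_{2(n+1)} \mid \mathcal{F}_{2n})$ is comparable to $\sum_n d(e^n)^{N-2}$, which diverges by Assumption~\ref{assumpt:g22}(a). The conditional Borel--Cantelli lemma then finishes the proof. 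The main technical obstacle lies in step~(i): one must apply second-order slow variation to extract the precise asymptotic factor $g(u)$ contributed by each near-record constraint, and simultaneously decouple the near-record heights from the spacing and low-trap contributions via conditional independence, so as to confirm that the multiplicative constant $c$ depends only on the $\varepsilon_i$'s and not on $n$.
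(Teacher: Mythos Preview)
Your proposal is correct and follows essentially the same route as the paper's own proof: verify measurability via the observation that the sum condition forces $r_{n+1}\ge z_N+\varepsilon_6^{-1}\Lambda_{n-1}$, then factor the conditional probability into gap constraints (your (ii), the paper's $p_1^{N-2}p_4$), near-record height constraints contributing $g(\sigma_{(2n+1)})^{N-2}$ (your (i), the paper's $p_2^{N-2}$), the overshoot condition $\sigma_{(2n+2)}>\varepsilon_5^{-1}u$ (your (iii), the paper's $p_3$), and the post-$z_N$ sum condition (your (iv), the paper's $p_5$), concluding via Lemma~\ref{lem:probrecord}, monotonicity of $g$, and Assumption~\ref{assumpt:g22}(a).

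The one noteworthy difference is in the treatment of the final sum condition. The paper handles $p_5$ by rewriting the event as $\{L((1-\varepsilon_0)\varepsilon_7^{-1}S_m)/m<\varepsilon_6\}$ with $m=\varepsilon_6^{-1}\Lambda_{2n+1}$ and invoking the limiting distribution of $L(S_m)/m$ to obtain $p_5\to e^{-\varepsilon_6^{-1}}$. Your decomposition into ``no near-record in the window'' (asymptotic probability $e^{-\varepsilon_6^{-1}}$) and ``low traps sum to at most $\varepsilon_7 u$'' (probability $\to 1$ by Markov and Proposition~\ref{prop:sosv}) is a slightly different and arguably more transparent way to get the same bound, since it avoids the implicit appeal to $S_m/m_m\to 1$ in probability. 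Either argument suffices.
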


\begin{proof} It is clear that
\[\left\{z_N=r_n,\:\frac{z_N-z_{N-1}}{\Lambda_{n-1}}\in(\varepsilon^{-1}_1,\varepsilon_2^{-1}),
\:\frac{z_{N-1}-z_1}{\Lambda_{n-1}}<\varepsilon_3^{-1},\:\sigma_{(n)}>\varepsilon_5^{-1}\sigma_{(n-1)}\right\}\in \mathcal{F}_n.\]
Moreover, on the event
\[\left\{\sum_{z_N<z<z_N+\varepsilon_6^{-1}\Lambda_{n-1}}\sigma_z<\varepsilon_7\sigma_{(n-1)}\right\},\]
we have that $\sigma_z<\varepsilon_7{\sigma_{(n-1)}}<\sigma_{(n)}$ for all $z\in \{z_N+1,\dots,z_N+\varepsilon_6^{-1}\Lambda_{n-1}-1\}$, and so it must be the case that $r_{n+1}\geq z_N+\varepsilon_6^{-1}\Lambda_{n-1}$. In particular, the sum is $\mathcal{F}_n$-measurable. Thus we conclude that $\tilde{\mathcal{E}}_{n}\in\mathcal{F}_n$, as desired.

For the remainder of the proof, it is useful to note that $\tilde{\mathcal{E}}_{n}$ contains the following event:
\[\left\{\frac{z_{i+1}-z_i}{\Lambda_{n-1}}<\frac{\varepsilon_3^{-1}}{N-2},\:i=1,\dots,N-2\right\}\cap
\left\{\sigma_{z_i}\leq \sigma_{(n-1)},\:i=2,\dots,N-1\right\}\]
\[\cap\left\{\sigma_{(n)}>\varepsilon_5^{-1}\sigma_{(n-1)}\right\}\cap
\left\{\frac{z_N-z_{N-1}}{\Lambda_{n-1}}\in(\varepsilon_1^{-1},\varepsilon_2^{-1})\right\}\cap\left\{\sum_{z_N<z<z_N+\varepsilon_6^{-1}\Lambda_{n-1}} \sigma_z<\varepsilon_7\sigma_{(n-1)}
\right\}.\]
Hence, it is easy to see using the independence properties of $(\sigma_x)_{x\geq 0}$ that
\[\mathbf{P}\left( \tilde{\mathcal{E}}_{2(n+1)}  | \mathcal{F}_{2n} \right)\geq p_1^{N-2}p_2^{N-2}p_3p_4p_5,\]
where
\begin{eqnarray*}
p_1&=&\mathbf{P}\left(z_{2}-z_1<\frac{\Lambda_{2n+1}\varepsilon_3^{-1}}{N-2}\:\vline\:\sigma_{(2n+1)}\right),\\
p_2&=&\mathbf{P}\left(\sigma_{z_2}\leq \sigma_{(2n+1)}\:\vline\:\sigma_{(2n+1)}\right),\\
p_3&=&\mathbf{P}\left(\sigma_{(2n+2)}>\varepsilon_5^{-1}\sigma_{(2n+1)}
\:\vline\:\sigma_{(2n+1)}\right),\\
p_4&=&\mathbf{P}\left(\frac{z_N-z_{N-1}}{\Lambda_{2n+1}}\in(\varepsilon_1^{-1},\varepsilon_2^{-1})\:\vline\:\sigma_{(2n+1)}\right),\\
p_5&=&\mathbf{P}\left(\sum_{z_N<z<z_N+\varepsilon_6^{-1}\Lambda_{2n+1}}\sigma_z<\varepsilon_7\sigma_{(2n+1)}\:\vline\:\sigma_{(2n+1)}\right).
\end{eqnarray*}
(Note that the indices relate to $\tilde{\mathcal{E}}_{2(n+1)}$, and so $z_1=r_{2n+1}$ and $z_N=r_{2n+2}$.) In particular, it is straightforward to check that, almost-surely,
\[p_1= 1-\left(1-\frac{1}{\Lambda_{2n+1}}\right)^{\frac{\varepsilon_3^{-1}\Lambda_{2n+1}}{N-2}}\geq 1-e^{-\frac{\varepsilon_3^{-1}}{N-2}}>0,\]
\[p_2=1-\frac{L((1-\varepsilon_0)\sigma_{(2n+1)})}{L(\sigma_{(2n+1)})}
\sim-\log(1-\varepsilon_0)g(\sigma_{(2n+1)})\geq cd(e^{(1+\varepsilon)(2n+1)}),\]
where we used the eventual monotonicity of $g$ guaranteed by Assumption \ref{assumpt:sosv} and the almost-sure bounds on records of Lemma \ref{lem:probrecord},
\[p_3= \frac{L(\sigma_{(2n+1)})}{L(\varepsilon_5^{-1}\sigma_{(2n+1)})}\rightarrow 1,
\]
\begin{eqnarray*}
p_4&=&\mathbf{P}\left(\frac{z_N-z_{N-1}}{\Lambda_{2n+1}}>\varepsilon_1^{-1}
\:\vline\:\sigma_{(2n+1)}\right)-
\mathbf{P}\left(\frac{z_N-z_{N-1}}{\Lambda_{2n+1}}>\varepsilon_2^{-1}\:\vline\:\sigma_{(2n+1)}\right)\\
&=&\left(1-\frac{1}{\Lambda_{2n+1}}\right)^{\Lambda_{2n+1}\varepsilon_1^{-1}}-
\left(1-\frac{1}{\Lambda_{2n+1}}\right)^{\Lambda_{2n+1}\varepsilon_2^{-1}}
\sim e^{-\varepsilon_1^{-1}}-e^{-\varepsilon_2^{-1}},
\end{eqnarray*}
which is strictly positive, and finally,
\[p_5=\mathbf{P}\left(\frac{L((1-\varepsilon_0)\varepsilon_7^{-1}S_m)}{m}<\varepsilon_6\right)\vline_{m=\varepsilon_6^{-1}\Lambda_{2n+1}}\rightarrow e^{-\varepsilon_6^{-1}}>0.\]
Hence we deduce that
\[\mathbf{P}\left( \tilde{\mathcal{E}}_{2(n+1)}  | \mathcal{F}_{2n} \right)\geq cd(e^{(1+\varepsilon)(2n+1)})^{N-2}.\]
Noting that a monotone sequence $a_n$ is summable if and only if $a_{\floor{(1+\varepsilon)(2n+1)}}$ is summable, the above sequence is not summable under Assumption \ref{assumpt:g22}, completing the proof.
\end{proof}

Note that the previous lemma shows that, under the same conditions, Assumption~\ref{assump} holds. Thus, together with Corollary \ref{cor:main2}, this completes the proofs of Theorems \ref{thm:main1} and~\ref{thm:main2}. Moreover, together with Corollary \ref{cor:main3}, it completes the proof of Theorem \ref{thm:main3}.

\bibliography{paper}{}
\bibliographystyle{plain}

\end{document}